\pgfplotsset{compat=newest}
\newcommand{\dd}{\textup{d}}
\def \R{\mathbb R}
\def \RR{\mathbb R}
\def \N{\mathbb N}
\newcommand\p\varphi
\newcommand\e\epsilon
\definecolor{battleshipgrey}{rgb}{0.52, 0.52, 0.51}
\definecolor{forestgreen}{rgb}{0.0, 0.27, 0.13}
\definecolor{dgreen}{RGB}{0,80,0}
\definecolor{gray}{RGB}{128,128,128}
\definecolor{ffqqqq}{rgb}{1.,0.,0.}
\definecolor{qqqqff}{rgb}{0.,0.,1.}
\definecolor{orange}{RGB}{255,127,0}
\definecolor{purple}{RGB}{255,0,255}
\definecolor{DP}{RGB}{100,0,100}
\definecolor{shadebrushcolor}{gray}{0.9}
\definecolor{hyperlinkcolor}{gray}{0.4}
\begin{document}

\title{Mortensen Observer for a class of variational inequalities -- Lost equivalence with stochastic filtering approaches}

\author{L.-P. Chaintron$^{1,2}$, Á. Mateos$^{3}$ González, L. Mertz$^{3}$, P. Moireau$^{2}$ \\
{\small $^{1}$ DMA, \'Ecole Normale Sup\'erieure 45 rue d’Ulm, 75005 Paris, France;}\\
{\small$^{2}$Inria -- LMS, Ecole Polytechnique, CNRS -- Institut Polytechnique de Paris; Palaiseau, France;}  \\
{\small$^{3}$ECNU-NYU Institute of Mathematical Sciences, NYU Shanghai, Shanghai, China; }}
\date{}
\maketitle


\begin{abstract} 
We address the problem of deterministic sequential estimation for a nonsmooth dynamics in $\RR^+$ governed by a variational inequality, as illustrated by the Skorokhod problem with a reflective boundary condition at $0$. 
For smooth dynamics, Mortensen \cite{Mortensen:1968vy} introduced an energy for the likelihood that the state variable produces --~up to perturbations disturbances~-- a given observation in a finite time interval, while reaching a given target state at the final time. The Mortensen observer is the minimiser of this energy. For dynamics given by a variational inequality and therefore not reversible in time, we study the definition of a Mortensen estimator.
On the one hand, we address this problem by relaxing the boundary constraint of the synthetic variable and then proposing an approximated variant of the Mortensen estimator that uses the resulting nonlinear smooth dynamics.
On the other hand, inspired by the smooth dynamics approach in~\cite{JB88}, we study the vanishing viscosity limit of the Hamilton-Jacobi equation satisfied by the Hopf-Cole transform of the solution of the robust Zakai equation. We prove a stability result that allows us to interpret the limiting solution as the value function associated with a control problem rather than an estimation problem. In contrast to the case of smooth dynamics~\cite{Hijab:1980,JB88,Fleming:1997wm}, here the zero-noise limit of the robust form of the Zakai equation cannot be understood from the Bellman equation of the value function arising in Mortensen's deterministic estimation. This may unveil  a violation of equivalence for non-reversible dynamics between the Mortensen approach and the low noise stochastic approach for nonsmooth dynamics.
\end{abstract}

%

\tableofcontents

%

\section{Introduction}

In this paper, we consider the problem of estimating the deterministic state resulting from a nonsmooth dynamical system given an observation. The system state is the solution of a variational inequality,  and both the state dynamics and observation are subjected to disturbances and the objective is to find the ``best'' deterministic estimate of the state from the observation.  
The problem is motivated by the state estimation of fundamental nonsmooth dynamical systems related to a) elasto-plasticity (transition from elastic and plastic phases) \cite{Duvautea76}, b) dry friction (transition from static and dynamic phases) \cite{Bastienea00} or c) impacts (switch of velocity at the instant of contact with an obstacle) \cite{Bernardinea13}. The state variable in such models is non-differentiable at the transition from a phase to one another. These models can be represented using the framework of variational inequalities. It is worth mentioning that the Skorokhod problem with a reflective boundary condition at 0 belongs to this kind of framework.
For smooth dynamical systems an ``optimal'' deterministic approach to non-linear system filtering was made by Mortensen \cite{Mortensen:1968vy}. He proposes to minimise an energy associated with the likelihood that the state variable produces --~up to disturbances~-- a given observation on a finite time interval while reaching a given target state at the final time. The lower the energy of a target state, the more likely it is. The Mortensen filter is the minimiser of this energy, also known since then as the minimum energy estimator \cite{Hijab:1980,Krener:2015}. Moreover, Mortensen also proposed a differential equation for the time dependent dynamics of this estimator. Then, several authors, see for instance \cite{Hijab:1980,JB88,Fleming:1997wm} made the connection between Mortensen's approach and fundamental methods of stochastic filtering where both the state and observations disturbances are small amplitude white noises. The central tool of stochastic filtering is the Zakai equation, whose solution is the unnormalised conditional density of the state given the observation \cite{zakai1969optimal,Jazwinsky70,xiong2008}. They obtained the minimum energy approach as the  zero noise limit of the robust form (path-wise form) of the Zakai equation. The proof of  \cite{Hijab:1980} is essentially probabilistic and relies on large deviations theory. In the proof of \cite{JB88}, Mortensen's energy evolves according to a nonlinear PDE of Hamilton-Jacobi-Bellman type, and is interpreted as a solution in the viscosity sense.  
In this paper we study the extension of the Mortensen estimator for  dynamics. First, we propose an approximation of the Mortensen estimator using a penalisation approach. Then, we extend the analysis of \cite{JB88} to the case of non-smooth dynamics in $\RR_+$. We study the vanishing viscosity limit of the Hamilton-Jacobi equation satisfied by the Hopf-Cole transform of the solution of the robust Zakai equation, and we prove a stability result that allows us to interpret the limiting solution as the value function associated with a control problem, but not as the value function of the minimum energy estimation problem, as was the case for time-reversible dynamics.
The paper is organized as follows. In Section 2 we introduce the nonsmooth dynamical systems at stake in this paper and state our main results. It also contains a discussion of the so-called Skorokhod problem with a reflective boundary condition at $0$. In Section 3 we give an overview of \cite{JB88} formulated in the context of smooth approximations of the nonsmooth systems considered in this paper, hence leading to an approximated Mortensen estimator definition. In Section 4, we prove a Hamilton-Jacobi stability result linking the stochastic filtering problem to a limit Hamilton-Jacobi equation, which is reinterpreted in this section as a value function of a control problem, but not as a value function of the Mortensen estimator for nonsmooth dynamics.

\section{Main results}
We consider the $\mathbb{R}^+$- valued state variable $x = (x(t))_{t \in [0,T]}$ solution of the variational inequality (VI)
\begin{equation}
\label{eq:dynsys}
 \forall t \mbox{ a.e.} \in [0,T], \: \forall z \geq 0, \: (f(x(t))+\omega(t) - \dot x(t))(z- x(t)) \leq 0
\end{equation}
where $f$ is a Lipshitz function from $\mathbb{R}$ to $\mathbb{R}$ and $\omega$ is the \textit{state disturbance}, that is a square integrable function from $[0,T]$ to $\mathbb{R}$. Here $x$ is continuous and differentiable almost everywhere. For adequate conditions of existence and uniqueness see \cite{brezis1973ope}. This model is classical. When $f \equiv 0$, $x$ is related to the deterministic Skorokhod problem \cite[p.231]{pardoux2014sdes} in the following way.
Given $x_0 \in \mathbb{R}^+$ and $\Omega$ the primitive of $\omega$ vanishing at $0$. 
The deterministic Skorokhod problem consists in finding a pair $(x,\Delta)$ satisfying the four conditions:
1) $x$ is a positive continuous function taking the value $x_0$ at $t=0$, 2) $\Delta$ is a continuous decreasing function vanishing at $0$, 3) $x+\Delta = x_0 + \Omega$ and 4) $\Delta$ varies only when $x=0$.
In fact, for such simple configuration of constrained dynamics, we can specify the solution: 
\begin{equation*}
x(t) + \Delta(t) = x(0) + \int_0^t \modelNoise(s) \, \diff s \: \text{ where } \: \Delta(t) :=  \min \limits_{0 \leq s \leq t} \min \left( 0\, ;\, x(0) + \int_0^s \modelNoise(\tau) \,\diff \tau \right),
\end{equation*}
see an example of trajectory in Figure~\ref{fig:trajectory}.

\begin{figure}[htbp]
\centering
\begin{tikzpicture}
\begin{axis}[
   axis lines = middle,
   xlabel = \(t\),
   legend pos= north west,
]

 \addplot [
     domain = 0:3.7554,
     samples=50,
     color=red,
     very thick,
     ]
{5+x*cos(deg(sqrt(5)*x))+sqrt(2)*x*sin(deg(x))};

\addplot [
     domain = 3.7554:4.4080,
     samples=50,
     color=red,
     very thick,
    ]
{0};

\addplot [
     domain = 4.4080:9.5719,
     samples=50,
     color=red,
     very thick,
     ]
{4.9507+5+x*cos(deg(sqrt(5)*x))+sqrt(2)*x*sin(deg(x))};

\addplot [
     domain = 9.5719:10.1047,
     samples=50,
     color=red,
     very thick,
     ]
{0};
\addplot [
     domain = 10.1047:16,
     samples=50,
     color=red,
     very thick,
     ]
{12.3036+5+x*cos(deg(sqrt(5)*x))+sqrt(2)*x*sin(deg(x))};
\legend{$x(t)$,,,,,$x(t)+\Delta(t)$,$\Delta(t)$}

 \addplot [
     domain = 0:16,
     samples=200,
     color=black,
     ]
{5+x*cos(deg(sqrt(5)*x))+sqrt(2)*x*sin(deg(x))};

 \addplot [
     domain = 0:3.7554,
     samples=10,
     very thick,
     color=gray,
     ]
{0};

\addplot [
     domain = 3.7554:4.4080,
     samples=10,
     very thick,
     color=gray,
     ]
{5+x*cos(deg(sqrt(5)*x))+sqrt(2)*x*sin(deg(x))};

\addplot [
     domain = 4.4080:9.5719,
     samples=10,
     very thick,
     color=gray,
     ]
{-4.9507};

\addplot [
     domain = 9.5719:10.1047,
     samples=10,
     very thick,
     color=gray,
     ]
{5+x*cos(deg(sqrt(5)*x))+sqrt(2)*x*sin(deg(x))};
\addplot [
     domain = 10.1047:16,
     samples=10,
     very thick,
     color=gray,
     ]
{-12.3036};
\end{axis}
\end{tikzpicture}
\caption{An example of trajectory with an oscillating $\Omega$. 
We observe $x_0 + \Omega$ in black, $\Delta$ in gray and $x$ in red. 
The "upward push" $-\Delta$ keeps the resulting state variable $x$ positive. The push occurs only when $x=0$.  
}
\label{fig:trajectory}
\end{figure}
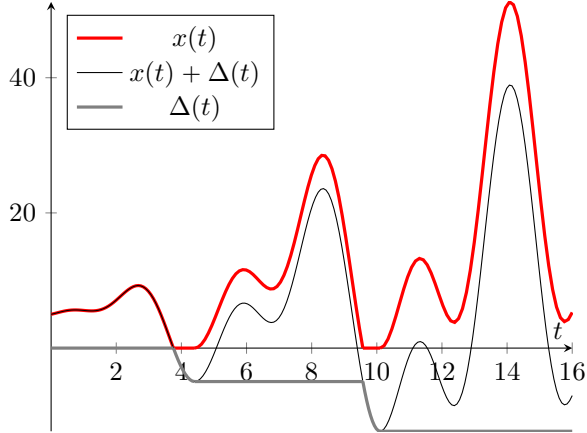

We also consider a measurement procedure $\obsMap \in C^2(\RR^+,\RR)$, so that the observations associated with a trajectory of the dynamics \eqref{eq:dynsys} are given by
\begin{equation}
\label{eq:obs}
	\forall t \geq 0, \quad \dot y(t) = \obsMap(\state(t)) + \obsNoise(t),
\end{equation}
where $\obsNoise(t) \in \RR$ is the observation \textit{disturbance}. 
\begin{remark}
We follow here the usual notation convention in stochastic filtering in which the left hand side (lhs) of \eqref{eq:obs} is denoted by $\dot y$. 
In most deterministic observation problems, the lhs of \eqref{eq:obs} is denoted by $y$. 
\end{remark}
In this deterministic estimation setting, let us specify the notation of the observed trajectory with $\{ \target{\state}(t)\}_{t\geq 0}$, 
where the measurement procedure has produced the measurement $\{ \dot y(t)\}_{t\geq 0}$, fixed from now on.
In this context, $t \mapsto \dot y(t) - \obsMap(\target{\state}(t),t)$ is a measurement error. 
Both the state and observation disturbances are unknown but we assumed that they have minimal $L^2$ energy. 
Our objective is to design a causal estimator (also called observer) of the partially observed trajectory $\{\target{\state}(t)\}_{t\geq 0}$ based only on the available information $\{\dot y(t)\}_{t\geq 0}$. The observer should be understood in the sense of \cite{Krener:1998ve}, 
in particular it is a causal estimator in the sense that for all $t\geq 0$, the estimation is based on the measurements $\{\dot y(s)\}_{0 \leq s \leq t}$ only. In other words, it is non-anticipative. 

\subsection{Maximum likelihood, Cost-to-come and Mortensen filter for the variational inequality}
Consider a triple $(\omega, x_0,x_1) \in L^2(0,t) \times [0,\infty)^2$ for which there exists a continuous function $x_{|\omega,x_0,x_1}$, differentiable almost everywhere satisfying \eqref{eq:dynsys} with the state disturbance $\omega$ and $(x_{|\omega,x_0,x_1}(0),x_{|\omega,x_0,x_1}(t)) = (x_0,x_1)$.  $x_{|\omega,x_0,x_1}$ is a synthetic variable. We drop the index notation $x_{|\omega,x_0,x_1}$ to streamline the presentation.
We can associate a finite energy with such a triple
$$
\mathcal{J}(\omega,x_0;x_1,t) := \psi(x_0) + \int_0^t \ell (\omega(s),x(s),s) {\rm d} s,
$$
where $\psi : \mathbb{R}^+ \to \mathbb{R}^+$ is locally Lipschitz and
\[
 \ell(\omega,x,s) := \frac{1}{2} \| \omega \|^2 + \frac{1}{2} \| \dot y(s) - h(x) \|^2.
\]
Otherwise if the triple $(\omega, x_0,x_1)$ does not satisfy the aforementioned condition for finite energy then its energy is infinite. In the definition of the energy above, observe that $x_0$ and $x_1$ are put on an equal footing.
In terms of path, if a triple $(\omega^\star, x_0^\star,x_1^\star) \in L^2(0,t) \times [0,\infty)^2$ is the unique (one of the) minimizer(s) of $\mathcal{J}$ then the (one of the) most likely state(s) of $\target{\state}$ is  $x_{|\omega^\star,x_0^\star,x_1^\star}$.
Let us fix the terminal state $x$ at the terminal time $t$. The \emph{cost-to-come} to the point $x$ at time $t$, given the observation $\{ \dot y(s), 0 \leq s \leq t \}$, is defined by 
\begin{equation}
\label{eq:costcome}
\costcome(x,t; \dot y(.)) := \inf_{(\omega,x_0) \in \mathcal{A}_{x, t}} \mathcal{J}(\omega,x_0;x,t) 
\: 
\mbox{ where }
\:
\mathcal{A}_{x, t} := 
\left\{  (\omega, x_0) \in L^2(0,t) \times [0,\infty), \:  \mathcal{J}(\omega,x_0;x,t) < \infty \right\}.
\end{equation}
For the variational inequality \eqref{eq:dynsys}, a Mortensen estimator $t\mapsto\hat{\state}(t)$ minimizes $\costcome(x,t)$ as a function of $x$.
\subsection{An approximated Mortensen estimator from nonsmooth dynamics penalization}
We relax the boundary constraint of the synthetic state variable that appears in the energy $\costcome(x,t; \dot y(.))$. The inequality is replaced by a nonlinear equation with a drift penalising the solution whenever it takes negatives values. We then introduce a \textit{modified cost-to-come} $\costcome^{\kappa}(x,t; \dot y(.))$ whose definition is similar to $\costcome(x,t; \dot y(.))$ in \eqref{eq:costcome} except that $\mathcal{A}_{x, t}$ is replaced by 
\[
\mathcal{A}^{\kappa}_{x, t} := \left\{ (\zeta, \omega) \in \mathbb{R}^+ \times L^2(0,t),\: 
\exists \: \state^{\kappa} \text{ that satisfies } \dot  \state^{\kappa} = f^\kappa (  \state^{\kappa})  + \modelNoise, \: \mbox{ a.e. }  \text{ with } \state^{\kappa}(0)=\initNoise, \: \state^{\kappa}(t) = \state \right\}.
\]

Here $\state^{\kappa}$ is an approximate version in $\R$ of \eqref{eq:dynsys} where
\begin{equation} 
\label{eq:pen_det}
\begin{cases}
\dot \state^{\kappa}(t) = f^\kappa ( \state^{\kappa}(t) )  + \modelNoise(t),\: \mbox{ a.e. } \: t > 0,\\ 
\state^{\kappa}(0) = \initNoise, 
\end{cases}
\end{equation}
the penalty function $f^{\kappa}$  being a $C^1$ approximation of the Moreau-Yosida regularisation $f^\kappa_0: \state \mapsto \kappa \max ( -\state , 0) + f(x)$that agrees with $f^\kappa$ over $(-\infty, -\kappa^{-1}) \cup \RR_+$ and has slope at most $-2 \kappa$ over $(-\kappa^{-1}, 0)$. 
The additional term $\kappa \max ( -\state , 0)$ vanishes as soon as $\state \geq 0$, and introduces a drift of strength $\kappa$ towards the non-negative half-line when 
$\state < -\kappa^{-1}$, and a drift of strength between $0$ and $2 \kappa$ towards the non-negative half-line when $- \kappa^{-1} < \state < 0$. As $\kappa \rightarrow + \infty$, the solution of \eqref{eq:pen_det} converges towards the solution $\state$ of \eqref{eq:dynsys} in the max norm on any finite time interval, using 
 analogous techniques than for the Moreau-Yosida regularisation  \cite{brezis1973ope}. \\
We then define a variant of the Mortensen estimator with relaxation as follows:
\begin{equation}
\label{eq:mortensen}
\forall t\geq 0, \quad  \hat{\state}^{\kappa}(t) := \argmin_{\state \in \mathbb{R}} \costcome^{\kappa}(x,t; \dot y(.)),
\end{equation}
under the condition of existence and uniqueness of such a minimizer for the function $x \to \costcome^{\kappa}(x,t; \dot y(.))$. 
In $\costcome^{\kappa}(x,t; \dot y(.))$, we point out that the  the given observation $\dot y(.)$ was produced --~up to measurement errors~-- from a target system $\check{\state}$ governed by a variational inequality. In other words, for the trajectory $\state^{\kappa}$ generated by the penalised dynamics the disturbance in $\dot y(.)$ contains measurement and model errors. To ease the reading, we will now write $\costcome^{\kappa}(\state,t) = \costcome^{\kappa}(x,t; \dot y(.))$.

%
%
%
\begin{theorem}
The cost-to-come $(x,t) \mapsto \costcome^{\kappa}(\state,t)$ defined above is a viscosity solution of
\begin{equation}\label{eq:HJB-smooth}
	\begin{cases}
		\partial_t \costcome^{\kappa}(\state,t)  + \hamilton(x,t,\partial_x \costcome^{\kappa}(\state,t))  = 0, & (\state,t)\in \RR \times \RR^+ \\ 
		\costcome^{\kappa}(\state,0) = \initError(\state), & \state\in \RR
	\end{cases}
\end{equation}
where the Hamiltonian is given by 
\begin{equation}\label{eq:hamiltonian-min}
\hamilton(x,t,\lambda) :=  \max \limits_{\modelNoise \in \RR} \Big[ \lambda (\modelMap^\penal(\state) + \modelNoise) -  \timeError(\state,\modelNoise, t)  \Big]  =  \dfrac{1}2  \lambda^2 + \lambda  \modelMap^\penal(\state) - \dfrac{1}{2} \norm{\dot{y}(t)-h(\state)}^2.
\end{equation}
\end{theorem}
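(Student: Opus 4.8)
The plan is to recognise $\costcome^{\kappa}$ as the value function of a classical optimal control problem and to run the standard viscosity–solution argument in its ``cost-to-come'' (initial-value) orientation. Since $f^\kappa\in C^1$ is locally Lipschitz, for fixed $(x,t)$ the quantity $\costcome^{\kappa}(x,t)$ is the infimum over controls $\omega\in L^2(0,t)$ and penalised free starting points $x_0\in\RR^+$ of $\psi(x_0)+\int_0^t\ell(\omega(r),\xi(r),r)\,dr$ along the unique solution of $\dot\xi=f^\kappa(\xi)+\omega$ with $\xi(0)=x_0$, $\xi(t)=x$. First I would record the elementary a priori bounds $0\le\costcome^{\kappa}(x,t)\le\psi(x)+\tfrac{t}{2}f^\kappa(x)^2+\tfrac12\int_0^t\|\dot y(s)-h(x)\|^2\,ds$ --- the lower bound because $\psi\ge0$ and $\ell\ge0$, the upper bound from the admissible choice $\omega\equiv-f^\kappa(x)$ which freezes $\xi\equiv x$ --- so that $\costcome^{\kappa}$ is finite and locally bounded (and, by routine control estimates using the local Lipschitz continuity of $\psi$ and $f^\kappa$, continuous). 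The explicit form of $\hamilton$ in \eqref{eq:hamiltonian-min} is simply the maximisation of the strictly concave quadratic $\omega\mapsto\lambda(f^\kappa(x)+\omega)-\ell(\omega,x,t)$, attained at $\omega=\lambda$, and the initial condition $\costcome^{\kappa}(x,0)=\initError(x)$ is immediate since at $t=0$ admissibility forces $x_0=x$ and the running integral is empty.

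The structural ingredient is the \textbf{dynamic programming principle}: for every $0\le s\le t$ and every $x$,
\[
\costcome^{\kappa}(x,t)=\inf\Big\{\costcome^{\kappa}(\xi(s),s)+\int_s^t\ell(\omega(r),\xi(r),r)\,dr\Big\},
\]
the infimum over $\omega\in L^2(s,t)$ and $\xi$ solving $\dot\xi=f^\kappa(\xi)+\omega$ on $[s,t]$ with $\xi(t)=x$. The bound ``$\ge$'' follows by restricting an admissible trajectory for $(x,t)$ to $[0,s]$ and invoking the definition of $\costcome^{\kappa}$ at time $s$; the bound ``$\le$'' by concatenating an $\varepsilon$-optimal trajectory on $[0,s]$ that reaches $\xi(s)$ with the given piece on $[s,t]$, the concatenation being admissible because the dynamics is an ODE so that gluing at $s$ is free. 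Only routine measurability and selection points need to be checked here.

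\textbf{Viscosity subsolution.} Let $\varphi\in C^1$ and suppose $\costcome^{\kappa}-\varphi$ has a local maximum at $(x_0,t_0)$, $t_0>0$, normalised so that $\costcome^{\kappa}(x_0,t_0)=\varphi(x_0,t_0)$. Given $a\in\RR$, solve $\dot\xi=f^\kappa(\xi)+a$ backwards from $\xi(t_0)=x_0$, so that $\xi(t_0-\delta)=x_0-\delta\bigl(f^\kappa(x_0)+a\bigr)+o(\delta)$. The DPP at $s=t_0-\delta$ together with $\costcome^{\kappa}\le\varphi$ near $(x_0,t_0)$ give $\varphi(x_0,t_0)\le\varphi(\xi(t_0-\delta),t_0-\delta)+\delta\,\ell(a,x_0,t_0)+o(\delta)$; a first-order Taylor expansion of $\varphi$, division by $\delta$ and $\delta\downarrow0$ yield $\partial_t\varphi(x_0,t_0)+\partial_x\varphi(x_0,t_0)\bigl(f^\kappa(x_0)+a\bigr)-\ell(a,x_0,t_0)\le0$, and taking the supremum over $a\in\RR$ gives $\partial_t\varphi+\hamilton(x_0,t_0,\partial_x\varphi)\le0$.

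\textbf{Viscosity supersolution.} Let $\varphi\in C^1$ and suppose $\costcome^{\kappa}-\varphi$ has a local minimum at $(x_0,t_0)$, $t_0>0$, with equality there. For each small $\delta>0$ pick, by the DPP, a control $\omega_\delta\in L^2(t_0-\delta,t_0)$ and trajectory $\xi_\delta$ with $\xi_\delta(t_0)=x_0$ such that $\costcome^{\kappa}(x_0,t_0)\ge\costcome^{\kappa}(\xi_\delta(t_0-\delta),t_0-\delta)+\int_{t_0-\delta}^{t_0}\ell(\omega_\delta(r),\xi_\delta(r),r)\,dr-\delta^2$. The decisive a priori estimate --- and what I expect to be the main obstacle --- is that, since $\costcome^{\kappa}$ is locally bounded and $\ell(\omega,x,r)\ge\tfrac12|\omega|^2$, a short bootstrap using $\costcome^{\kappa}\ge\varphi$ nearby gives $\|\omega_\delta\|_{L^2(t_0-\delta,t_0)}=O(\sqrt\delta)$, hence $\sup_{[t_0-\delta,t_0]}|\xi_\delta-x_0|=O(\delta)$, which is what legitimates the subsequent Taylor expansions. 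Using $\costcome^{\kappa}\ge\varphi$ near $(x_0,t_0)$, the identity $\xi_\delta(t_0-\delta)-x_0=-\delta f^\kappa(x_0)-\int_{t_0-\delta}^{t_0}\omega_\delta(r)\,dr+o(\delta)$, the expansion $\int_{t_0-\delta}^{t_0}\tfrac12\|\dot y(r)-h(\xi_\delta(r))\|^2\,dr=\tfrac{\delta}{2}\|\dot y(t_0)-h(x_0)\|^2+o(\delta)$ (at a Lebesgue point of $\dot y$, everywhere if $\dot y$ is continuous), and the pointwise bound $\tfrac12|\omega_\delta(r)|^2-\partial_x\varphi(x_0,t_0)\,\omega_\delta(r)\ge-\tfrac12|\partial_x\varphi(x_0,t_0)|^2$ obtained by completing the square, one arrives after division by $\delta$ and $\delta\downarrow0$ at $\partial_t\varphi+\tfrac12|\partial_x\varphi|^2+f^\kappa(x_0)\partial_x\varphi-\tfrac12\|\dot y(t_0)-h(x_0)\|^2\ge0$ at $(x_0,t_0)$, that is $\partial_t\varphi+\hamilton(x_0,t_0,\partial_x\varphi)\ge0$. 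Combined with the subsolution bound this shows $\costcome^{\kappa}$ is a viscosity solution of \eqref{eq:HJB-smooth}. Apart from the short-time control estimate, the remaining non-routine points are the Lebesgue-point treatment of $\dot y$ when it is merely $L^2$ and the bookkeeping behind the dynamic programming principle; everything else is mechanical.
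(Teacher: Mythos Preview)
Your argument is correct and follows exactly the route the paper indicates: the paper does not spell out a proof but states that it ``is a direct adaptation of \cite{JB88}'' and then records Bellman's principle (Theorem~\ref{PrincipleBell}) as the dynamic programming identity whose infinitesimal version is \eqref{eq:HJB-smooth}; your DPP plus the standard sub-/supersolution verification with constant test controls and $\varepsilon$-optimal controls is precisely that adaptation, and indeed mirrors the detailed computations the paper later carries out for the constrained problem in Propositions~\ref{prop:sub} and~\ref{prop:sur}.
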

The proof is a direct adaptation of \cite{JB88}.
In Section 3, we provide further details and we recall the bridge with stochastic filtering for the problem of deterministic sequential estimation for the penalised problem in which the underlying state associated with the given observation is also governed by the penalised problem.

\subsection{Non equivalence with stochastic approaches for dynamics with variational inequality}
Inspired by the smooth dynamics approach in~\cite{JB88}, we study in Section 4 the vanishing viscosity limit of the Hamilton-Jacobi equation satisfied by the Hopf-Cole transformation of the solution of the robust Zakai equation. More precisely, we study  the Hamilton-Jacobi equation ''as is'' using the reflection method inspired in~\cite{Strauss08} to extend the Hamilton-Jacobi equations to the entire domain and avoid complications at the boundary. Since our limit $w$ of $w^\varepsilon$ when $\varepsilon$ tends to $0$ is not defined by dynamic programming, we do not obtain its boundedness in vain. This forces us to find sharper estimates of $w^\varepsilon$ than the counterparts of~\cite{JB88}. This stability result allows us to define a limit solution, which we can first understand as the value function of a control problem. However, unlike~\cite{JB88}, the limit function does not seem to follow the dynamic programming scheme of the Mortensen estimator.

\section{The penalised problem}
\subsection{A viscous Hamilton-Jacobi equation for the cost-to-come with penalised dynamics}\label{sec:penal-moretensen}
If we consider an optimal control pair $\initNoise$ and $\modelNoise_{|[0,t]}$ for the ``cost-to-come'' problem with terminal state $\state$ at time $t$ then for any intermediate time $t-\tau$ between the times $0$ and $t$, the part of this control enclosed by the times $0$ and $t-\tau$, namely $\modelNoise_{|[0,t-\tau]}$ remains optimal for the ``cost-to-come'' problem with terminal state $\state_{\penal|\initNoise,\modelNoise}(t-\tau)$ at time $t-\tau$. This is summarized by the following theorem proved in \cite{JB88}.
\begin{theorem}[Bellman's principle] \label{PrincipleBell}
Let $0 \leq t_{1} \leq t_{2} \leq t$, and choose $(\initNoise, \modelNoise) \in \mathcal{A}^\penal_{x, t}$. Then, we have
\[
\costcome^{\kappa} \left(\state_{\penal|\initNoise,\modelNoise}(t_{2}), t_{2}\right) \leq \costcome^{\kappa} \left(\state_{\penal|\initNoise,\modelNoise}(t_{1}), t_{1}\right)+\int_{t_{1}}^{t_{2}} \timeError\left(\state_{\penal|\initNoise,\modelNoise}(s), \modelNoise(s), s\right) \diff s.
\]
where 
$\dot \state_{\penal|\initNoise,\modelNoise} = \modelMap^\penal(\state_{\penal|\initNoise,\modelNoise}) + \omega$.
\end{theorem}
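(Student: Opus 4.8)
The statement is the one-sided (sub-optimality) half of the dynamic programming principle for $\costcome^{\kappa}$, and the plan is to prove it directly from the definition of the penalised cost-to-come (i.e. \eqref{eq:costcome} with $\mathcal{A}^{\kappa}$ in place of $\mathcal{A}$) by \emph{concatenating} an almost-optimal control steering the penalised dynamics to $\state_{\penal|\initNoise,\modelNoise}(t_{1})$ at time $t_{1}$ with the restriction to $[t_{1},t_{2}]$ of the given control $\modelNoise$. The point that makes this painless is that both $\costcome^{\kappa}(\cdot,t_{1})$ and $\costcome^{\kappa}(\cdot,t_{2})$ are cost-to-come functionals relative to the \emph{same} observation window $[0,\cdot]$ anchored at $0$: no time reversal is involved, and the running cost is simply additive across the split time $t_{1}$.

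First I would set up notation and an almost-optimal control: fix $(\initNoise,\modelNoise)\in\mathcal{A}^{\kappa}_{x,t}$, write $\bar\state:=\state_{\penal|\initNoise,\modelNoise}$ and $\state_{i}:=\bar\state(t_{i})$. Observe that $\mathcal{A}^{\kappa}_{\state_{1},t_{1}}\neq\emptyset$ and $\costcome^{\kappa}(\state_{1},t_{1})<\infty$, since $(\initNoise,\modelNoise_{|[0,t_{1}]})$ is admissible for the terminal data $(\state_{1},t_{1})$ — its trajectory being $\bar\state_{|[0,t_{1}]}$ — with finite energy because $(\initNoise,\modelNoise)$ has finite energy on $[0,t]$. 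Hence, for every $\e>0$ there is $(\initNoise',\modelNoise')\in\mathcal{A}^{\kappa}_{\state_{1},t_{1}}$ with
\[
\psi(\initNoise')+\int_{0}^{t_{1}}\timeError\big(\state_{\penal|\initNoise',\modelNoise'}(s),\modelNoise'(s),s\big)\,\diff s\;\le\;\costcome^{\kappa}(\state_{1},t_{1})+\e .
\]
Then I would glue: set $\widetilde\modelNoise:=\modelNoise'$ on $[0,t_{1}]$, $\widetilde\modelNoise:=\modelNoise$ on $(t_{1},t_{2}]$, and $\widetilde\initNoise:=\initNoise'$; since $\modelNoise'\in L^{2}(0,t_{1})$ and $\modelNoise\in L^{2}(0,t)$, we have $\widetilde\modelNoise\in L^{2}(0,t_{2})$. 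Let $\widetilde\state$ solve $\dot{\widetilde\state}=\modelMap^\penal(\widetilde\state)+\widetilde\modelNoise$, $\widetilde\state(0)=\widetilde\initNoise$. On $[0,t_{1}]$ it coincides with $\state_{\penal|\initNoise',\modelNoise'}$, so $\widetilde\state(t_{1})=\state_{1}=\bar\state(t_{1})$; on $[t_{1},t_{2}]$ both $\widetilde\state$ and $\bar\state$ solve the same ODE with right-hand side $\modelMap^\penal(\cdot)+\modelNoise(\cdot)$ and the same value at $t_{1}$, hence coincide on $[t_{1},t_{2}]$ by Cauchy--Lipschitz (legitimate since $\modelMap^\penal\in C^{1}$ is locally Lipschitz and $\modelNoise\in L^{2}$); in particular $\widetilde\state(t_{2})=\state_{2}$. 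Therefore $(\widetilde\initNoise,\widetilde\modelNoise)\in\mathcal{A}^{\kappa}_{\state_{2},t_{2}}$.

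Finally I would use $(\widetilde\initNoise,\widetilde\modelNoise)$ as a competitor in the infimum defining $\costcome^{\kappa}(\state_{2},t_{2})$ and split the integral at $t_{1}$, using $\widetilde\state=\bar\state$ on $[t_{1},t_{2}]$:
\[
\costcome^{\kappa}(\state_{2},t_{2})\;\le\;\psi(\widetilde\initNoise)+\int_{0}^{t_{1}}\timeError\big(\state_{\penal|\initNoise',\modelNoise'}(s),\modelNoise'(s),s\big)\,\diff s+\int_{t_{1}}^{t_{2}}\timeError\big(\bar\state(s),\modelNoise(s),s\big)\,\diff s .
\]
By the previous display the first two terms are $\le\costcome^{\kappa}(\state_{1},t_{1})+\e$, and letting $\e\downarrow0$ gives the claimed inequality. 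I do not expect a genuine obstacle here: this is the textbook concatenation-of-controls argument for the easy inequality of dynamic programming, exactly as carried out in \cite{JB88}. The only points worth a sentence of justification are that the concatenation of two $L^{2}$ controls is again $L^{2}$ (hence admissible) and that the glued curve is indeed \emph{the} trajectory of $(\widetilde\initNoise,\widetilde\modelNoise)$ on $[t_{1},t_{2}]$ — which is precisely where well-posedness of the penalised dynamics \eqref{eq:pen_det}, i.e. $\modelMap^\penal\in C^{1}$, enters. The non-smoothness and irreversibility of the original variational inequality \eqref{eq:dynsys} play no role, since the penalisation has already replaced it by an unconstrained ODE.
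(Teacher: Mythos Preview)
Your argument is correct and is exactly the standard concatenation-of-controls proof of the sub-optimality inequality. The paper does not actually supply its own proof of this theorem; it simply attributes the result to \cite{JB88}, so there is nothing substantive to compare against beyond noting that your approach is the one used there as well.
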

We here want to emphasize the importance of the reversibility in-time of the penalised problem to properly define the cost-to-come. Indeed, we can consider $x^{\kappa}_{\textup{rev}} : \tau \mapsto x^{\kappa}(t-\tau)$ following the dynamics
$- \dot x^{\kappa}_{\textup{rev}} (\tau) = \modelMap^{\kappa}(x^{\kappa}_{\textup{rev}} (\tau)) + \omega (\tau)$ with $x_{\textup{rev}} (0) = x.$ In this way, we find that $\mathcal{A}^{\kappa}_{x, t} \neq \emptyset$ and 
$\mathcal{A}^{\kappa}_{x, t} = \bigcup \limits_{\omega \in L^2(0,t)} \{ (x_{\textup{rev}}^{\kappa}(t),\omega) \}.$
The infinitesimal version of Bellman's principle above becomes \eqref{eq:HJB-smooth}. 
For the sake of completeness, we here recall the classical definition of a viscosity solution in $\R$. 
\begin{definition}
	Let $\mathcal{U} \in \Czero[\RR^\NstateSpace \times (0, T); \RR] .$ We say that $\mathcal{U}$ is a \emph{viscosity subsolution} of \eqref{eq:HJB-smooth} provided that for all $\testvisc \in \Cone[\RR^\NstateSpace \times (0, T); \RR]$, if $\mathcal{U} -\testvisc$ attains a local maximum at $(\state, t)$ then
\begin{equation}\label{eq:subsol-dyn-ineq}
	\partial_t \testvisc(x, t)+\hamilton(\state, t, \partial_x \testvisc(x, t)) \leq 0.
\end{equation}
We say that $\mathcal{U} $ is a viscosity supersolution of \eqref{eq:HJB-smooth} provided that for all $\testvisc \in \Cone[\RR^\NstateSpace \times (0, T); \RR]$, if $\mathcal{U} -\testvisc$ attains a local minimum at  $(\state, t)$, then
\begin{equation}\label{eq:supersol-dyn-ineq}
	\partial_t  \testvisc(x, t)+\hamilton(\state, t, \partial_x \testvisc(x, t)) \geq 0.	
\end{equation}
If $\mathcal{U}$ is both a viscosity subsolution and supersolution, we say that $\mathcal{U}$ is a \emph{viscosity solution} of \eqref{eq:HJB-smooth}.
\end{definition}
\subsection{The deterministic estimator of the penalised problem seen as the limit of a stochastic filtering problem} \label{sec:pensto}
Consider the problem of estimating the deterministic state resulting from the penalised problem given an observation. The unobserved observed trajectory is $\{ \target{\state}^\kappa(t)\}_{t\geq 0}$, where the measurement procedure has produced the measurement $\{ \dot y^\kappa(t) \}_{t\geq 0}$, fixed in this section.

Then the standard Mortensen estimator $\hat x^{\kappa,t}$ minimizes the cost to go $\mathcal{V}^{\kappa}(x,t;y^\kappa(.))$ whose definition is completely similar to 
$\mathcal{V}^{\kappa}(x,t;y(.))$ except that $y(.)$ is replaced by $y^\kappa(.)$. One alternative to solve this deterministic problem is to use a bridge with stochastic 
filtering as introduced in \cite{Hijab:1980,Hijab:1982ux} and further developped 
in \cite{JB88}. We introduce a small noise amplitude $\varepsilon > 0$, together with 
the nonlinear filtering problem in $\RR$


\begin{equation}
\label{penalization}
\begin{cases}
 \textup{d} X_t^{\kappa,\varepsilon} = f^{\kappa}(X_t^{\kappa,\varepsilon}) \textup{d}t + \sqrt{\varepsilon} \textup{d} B_t^1,\\
 \textup{d} Y_t^{\kappa,\varepsilon} = h(X_t^{\kappa,\varepsilon}) \textup{d}t + \sqrt{\varepsilon} \textup{d} B_t^2,\\
 \textup{with the initial condition} \: (X_0^{\kappa,\varepsilon},Y_0^{\kappa,\varepsilon}) = (\xi,0).
\end{cases}
\end{equation}
for independent brownian motions $(B^1_t)_{t\geq0}$ and $(B^2_t)_{t\geq0}$. To give a rigorous meaning to this, consider $\Omega := \mathcal{C}_0([0,\infty); \mathbb{R}^2)$ endowed with the topology of uniform convergence on compact sets. Let $\mathcal{F}$ denote the Borel $\sigma$-field on $\Omega$. For each $t \geq 0$ and $\omega \in \Omega$, define $B_t(\omega) := \omega(t)$ and set $\mathcal{F}_t := \sigma \{ B_s, \: 0 \leq s \leq t \}$ (the $\sigma$ algebra generated by $B$ up to time $t$). In this way, for all $0 \leq s \leq t, \: \mathcal{F}_s \subseteq \mathcal{F}_t$ and $\mathcal{F} = \sigma \left ( \cup_{\tau \geq 0} \mathcal{F}_\tau \right )$. We complete the triple $(\Omega,\mathcal{F}, \{ \mathcal{F}_t \})$ with the Wiener measure $\mathbb{P}$. We recall that the Wiener measure (see Karatzas \& Shreve 1991) is the unique probability measure on $(\Omega,\mathcal{F})$ satisfying for all $0 \leq s \leq t$ and $\Gamma \in \mathcal{B}(\mathbb{R}^2)$,
\[ \mathbb{P} \left ( B_t \in \Gamma | \mathcal{F}_s \right ) = \frac{1}{2\pi(s-t)} \int_\Gamma \exp \left ( -\frac{\| y - B_s \|^2}{2 (t-s)} \right ) \textup{d} y. \]
Here $\forall \zeta = (\zeta_1,\zeta_2) \in \mathbb{R}^2,\: \| \zeta \|^2 := \zeta_1^2 + \zeta_2^2$. Note that since $\{ B_0 =  0\} = \Omega$, we have $\mathbb{P}(B_0 =  0) =1$. Consider now $\varepsilon >0$, a state $\xi \geq 0$ and 
 a continuous bounded function $h: \mathbb{R} \to \mathbb{R}$, which admits a continuous bounded derivative. To assign a meaning to \eqref{penalization}, consider the mapping $\omega(.) \to (x^\kappa(.),y^\kappa(.))$ from $\mathcal{C}_0([0,T]; \mathbb{R}^2)$ to $\mathcal{C}([0,T]; \mathbb{R}^2)$ where for every $t\geq 0$,
\begin{equation*}
\begin{cases}
\displaystyle x^{\kappa,\varepsilon}(t) = \xi + \int_0^t f^{\kappa}(x^{\kappa,\varepsilon}(s)) \textup{d} s + \sqrt{\varepsilon} \omega^1(t),\\
\displaystyle y^{\kappa,\varepsilon}(t) = \int_0^t h(x^{\kappa,\varepsilon}(s)) \textup{d} s + \sqrt{\varepsilon} \omega^2(t).
\end{cases}
\end{equation*}
is well defined and continuous. If we denote this continuous map by $\phi^{\kappa,\varepsilon}_\xi$ 
then $\mathbb{P} \left( \phi^{\kappa,\varepsilon}_\xi \right)^{-1}$, the push forward measure of $\mathbb{P}$ by $\phi_\xi$, is the pathwise law associated with $(X^{\kappa,\varepsilon}, Y^{\kappa,\varepsilon})$ solving \eqref{penalization}. The filtering problem now aims to compute the measure-valued process $\left( \pi^{\kappa,\varepsilon}_t \right)_{t \geq 0}$ defined as
\[ \int_{\RR} \varphi \, \diff \pi^{\kappa,\varepsilon}_t := \mathbb{E} \left[ \varphi (X^{\kappa,\varepsilon}_t) | \sigma \left( Y^{\kappa,\varepsilon}_s \right)_{0 \leq s \leq t} \right], \]
for any bounded continuous $\varphi : \RR_+ \rightarrow \RR$, $\sigma \left( Y^{\kappa,\varepsilon}_s \right)_{0 \leq s \leq t}$ being the $\sigma$-algebra generated by the observation $Y^{\kappa,\varepsilon}_s$ up to time $t$. This estimate of $\varphi (X^{\kappa,\varepsilon}_t)$ is optimal in the least-square sense, providing the knowledge of $Y^{\kappa,\varepsilon}_s$ up to time $t$. An evolution non-linear equation called be the Kushner-Stratonovich equation can be derived for $\pi^{\varepsilon}_t$ using a sophisticated representation formula involving the \emph{innovation process}, see for instance \cite{bain2008fundamentals}. Let's focus on a rather simple approach which relies on the \emph{unnormalized conditional measure} \cite{bain2008fundamentals}
\[ \int_{\RR} \varphi \, \diff \rho^{\kappa,\varepsilon}_t := \mathbb{E} \left[ \exp \left. \left[ \frac{1}{\sqrt{\varepsilon}}\int_0^t h(x_s) \dd y_s - \frac{1}{2 \varepsilon} \int_0^t h^2(x_s) \dd s \right] \varphi (X_t) \right| \sigma \left( Y_s \right)_{0 \leq s \leq t} \right] \, , \]
which can be linked to $\pi^{\kappa,\varepsilon}_t$ by the Kallianpur-Striebel formula: for any continuous bounded function~$\varphi$
\[ \int_{\RR} \varphi \, \diff \pi^{\kappa,\varepsilon}_t = \dfrac{\displaystyle \int_{\RR} \varphi \, \diff \rho^{\kappa,\varepsilon}_t}{\displaystyle \int_{\RR} \diff \rho^{\kappa,\varepsilon}_t}. \]
This formula in this case is an analogous of Bayes formula, see \cite{kallianpur1968estimation,10.1007/BFb0085168,bain2008fundamentals}. The density $q^{\kappa,\varepsilon}(x,t)$ of $\rho^{\kappa,\varepsilon}_t$ with respect to the Lebesgue measure solves the linear stochastic partial differential equation (SPDE)
\begin{equation}
\begin{cases}
\mathrm dq^{\kappa,\varepsilon}(x,t) = A^\ast_{\kappa,\varepsilon}q^{\kappa,\varepsilon}(x,t)+\frac{1}{\varepsilon}h(x)q^{\kappa,\varepsilon}(x,t)\mathrm dY^{\kappa,\varepsilon}_t, &(x,t) \in \RR \times \RR^+ \\
q^{\kappa,\varepsilon}(x,0) = q_0^{\kappa,\varepsilon}(x), &x\in \RR
\end{cases}
\end{equation}
This is the Zakai equation, to which a rigours meaning is given in \cite{zakai1969optimal,Pardouxt1980StochasticPD,bain2008fundamentals}. The operator $A^*_{\kappa,\varepsilon}$ is the formal $L^2$ adjoint of 
\[A_{\kappa,\varepsilon}=\frac{\varepsilon}{2} \partial^2_{xx}+f^{\kappa} \partial_x,\]
The asymptotic behavior of $q^{\kappa,\varepsilon}(x,t)$ is studied in \cite{JB88} as $\varepsilon\to 0$. Instead of directly dealing with the Zakai equation, they performed the transform \cite{doss1977liens,sussmann1978gap}
\begin{equation} \label{HopfCole}
p^{\kappa,\varepsilon}(x,t)=\mathrm{exp}\Big(-\frac{1}{\varepsilon}y(t)h(x)\Big)q^{\kappa,\varepsilon}(x,t),
\end{equation}
for a given realisation $(\obsInteg(t))_{0\leq t\leq T}$ of $\left(Y^{\kappa,\varepsilon}_t\right)_{0\leq t\leq T}$, which leads to the robust form of Zakai equation \cite{clark2005robust,davis1980multiplicative,banek1986filtering}: 
\begin{equation}
\begin{cases}
\partial_t p^{\kappa,\varepsilon}(x,t)-\dfrac{\varepsilon}{2} \partial^2_{xx} p^{\kappa,\varepsilon}(x,t)+g^{\kappa}(x,t) \partial_x p^{\kappa,\varepsilon}(x,t) + \dfrac{1}{\varepsilon}\potential^{\kappa,\varepsilon}(x,t)p^{\kappa,\varepsilon}(x,t) = 0, &(x,t)\in\RR\times \R^+,
\\[0.2cm]
p^{\kappa,\varepsilon}(x,0)=q_0^{\kappa,\varepsilon}(x), & x\in\RR,
\end{cases}
\end{equation}
where $g^{\kappa}(x,t)=f^{\kappa}(x)-y(t) h'(x)$ and
\[
    \potential^{\kappa,\varepsilon}(x,t)=\frac{1}{2}h^2(x)+y(t)A_{\kappa,\varepsilon}h(x)-\frac{1}{2}y^2(t)|h'(x)|^2+\varepsilon \partial_x (f^{\kappa}(x)-y(t)h'(x)).
\]
Detailed computations can be found in appendix \ref{zakai}. By the logarithmic transformation --~also known as Hopf-Cole transform~--
\begin{equation}
\entropy^{\kappa,\varepsilon}(x,t)=-\varepsilon\log p^{\kappa,\varepsilon}(x,t),
\end{equation}
the robust form of Zakai equation can be converted into a Hamilton-Jacobi equation on $\entropy^{\kappa,\varepsilon}(x,t)$
\begin{equation}\label{belleq}
\begin{cases}
\partial_t \entropy^{\kappa,\varepsilon}(x,t) + \hamilton^{\kappa,\varepsilon}(x,t,\partial_x \entropy^{\kappa,\varepsilon})=\dfrac{\varepsilon}{2} \partial^2_{xx} \entropy^{\kappa,\varepsilon}, & (x,t)\in\RR \in \R^+,\\
\entropy^{\kappa,\varepsilon}(x,0)= \entropy^{\kappa}_0(x), & x\in\RR,
\end{cases}
\end{equation}
where
\begin{equation*}
\hamilton^{\kappa,\varepsilon}(x,t,\lambda)=\lambda g^{\kappa}(x,t)+\frac{1}{2} \lambda^2-\potential^{\kappa,\varepsilon}(x,t).
\end{equation*}
The $\varepsilon\to 0$ limit of $q^{\kappa,\varepsilon}(x,t)$ is then obtained by studying the one of $\entropy^{\kappa,\varepsilon}(x,t)$. The limit function $\entropy^{\kappa}(x,t)$ formally satisfies the Hamilton-Jacobi equation
\begin{equation}\label{HJBeq}
\begin{cases}
\partial_t \entropy^{\kappa}(x,t)+ \hamilton^{\kappa}(x,t,\partial_x \entropy^{\kappa})=0, &(x,t)\in\RR \times\R^+, \\[0.15cm]
\entropy^{\kappa}(x,0)=\entropy^{\kappa}_0(x), &x\in\RR, 
\end{cases}
\end{equation}
where 
\begin{align*}
\hamilton^{\kappa}(x,t,\lambda) &=\lambda g^{\kappa}(x,t)+\dfrac{1}{2} \lambda^2-\potential^{\kappa}(x,t),\\[0.1cm]
\potential^{\kappa}(x,t)&=\dfrac{1}{2}h^2(x)+y(t)h'(x)f^{\kappa}(x)-\dfrac{1}{2}y^2(t)|h'(x)|^2.
\end{align*}
In \cite{JB88}, the authors then establish a link between stochastic and deterministic estimation by proving that
\[ \costcome^{\kappa}(x,t) = \entropy^{\kappa}(x,t)-y(t)h(x), \]
using a uniqueness result for the vanishing viscosity solutions of \eqref{HJBeq}. 
Recall here that $\costcome^{\kappa}$ is the value function \eqref{eq:costcome} defined above and the initial condition $\psi(x) = \entropy^{\kappa}_0(x) - y(0)h(x)$.
As a by-product, they obtained the following asymptotic approximation
\[ q^{\kappa,\varepsilon}(x,t) \approx \exp \left[ - \frac{1}{\varepsilon} \costcome^{\kappa} (x,t) \right],\: \mbox{ as } \: \varepsilon \downarrow 0. \]

\subsection{The stochastic filtering problem for the constrained dynamics}

Fortunately, the stochastic filtering framework provides a way to extend the previous results to the limit case $\kappa \to \infty$ where the dynamics is constrained. This provides a candidate HJB equation that can be explored to define a Mortenten estimator for variational inequality dynamics. Since the full probabilistic framework is much more complicated, we only outline the main ingredients presented in \cite{pardoux2014sdes} and we set $f=0$ for the sake of conciseness. The resulting HJB is then rigorously analyzed as such in the next section.

Following  \cite{pardoux2014sdes}, let us consider the stochastic variational inequality in $\R_+$ 
\begin{equation}
\label{svi}
\begin{cases}
& \forall \text{ progressively measurable process }Z, \, \forall 0 \leq s \leq t, \\
&\int_s^t \left( Z_r - X^{\varepsilon}_r \right) \left( \sqrt{\varepsilon} \diff B^1_r - \diff X^{\varepsilon}_r \right) + \int_s^t
\mathcal{I}_{\RR_+} (X^{\varepsilon}_r) \diff r \leq \int_s^t
\mathcal{I}_{\RR_+} (Z_r) \diff r , \\
& \textup{d} Y^{\varepsilon}_t = h(X^{\varepsilon}_t) \textup{d}t + \sqrt{\varepsilon} \textup{d} B_t^2,\\
& \textup{with the initial condition} \: (X_0,Y_0) = (\xi,0),
\end{cases}
\end{equation}

\noindent with $\mathcal{I}_{\RR_+}$ which denotes the convex characteristics function of $\RR_+$ (it equals $0$ within $\RR_+$ and $+\infty$ outside). Following \cite[page 239]{pardoux2014sdes}, we say that a triple $(X^{\varepsilon},Y^{\varepsilon},K^{\varepsilon})$, an $\mathbb{R}^3$ valued stochastic process, is a solution of \eqref{svi}, if the following conditions are satisfied $\mathbb{P}$ almost surely (a.s.)
\begin{enumerate}
\item
$X^{\varepsilon},Y^{\varepsilon},K^{\varepsilon}$ are progressively measurable with continuous path and $K_0 = 0$,
\item
$\forall t \geq 0, X^{\varepsilon}_t \geq 0$,
\item
$\forall T \geq 0, \| K^{\varepsilon} \|_T < \infty$,
\item
$
\forall t \geq 0, 
\:  X^{\varepsilon}_t + K^{\varepsilon}_t = \xi + \sqrt{\varepsilon} B_t^1, \: \mbox{ and }
\:  Y^{\varepsilon}_t  = \int_0^t h(X^{\varepsilon}_s) \textup{d} s + \sqrt{\varepsilon} B_t^2,
$
\item
$
\forall 0 \leq s \leq t, \: \forall z \in [0,\infty), \: \int_s^t (z-X^{\varepsilon}_r) \textup{d} K^{\varepsilon}_r \leq 0.
$
\end{enumerate}
\noindent Since the diffusion coefficients in front of $B_1$ and $B_2$ are constant, we may fix an arbitrary $\omega \in \Omega$ and regard \eqref{svi} as a deterministic problem with forcing 
$\{ (B_t^1(\omega),B_t^2(\omega)) , \: t \geq 0 \}$.

\noindent Still following \cite{pardoux2014sdes}, we say that a triple $(x^{\varepsilon},y^{\varepsilon},k^{\varepsilon})$ is a solution of the generalized Skorokhod problem $\mathcal{GS}^{\varepsilon}$, if the following conditions hold:
\begin{enumerate}
\item
$x^{\varepsilon},y^{\varepsilon},k^{\varepsilon}$ are continuous, $x^{\varepsilon}(0) = \xi$ and $k^{\varepsilon}(0) = 0$,
\item
$\forall t \geq 0, x^{\varepsilon}(t) \geq 0$,
\item
$ k^{\varepsilon} \in BV_{loc}([0,\infty); \mathbb{R})$,
\item
$
\forall t \geq 0, 
\:  x^{\varepsilon}(t) + k^{\varepsilon}(t) = \xi + \sqrt{\varepsilon} \omega^1(t), \: \mbox{ and }
\:  y^{\varepsilon}(t)  = \int_0^t h(x^{\varepsilon}(s)) \textup{d} s + \sqrt{\varepsilon} \omega^2(t),
$
\item
$
\forall 0 \leq s \leq t, \: \forall z^{\varepsilon} \in [0,\infty), \: \int_s^t (z^{\varepsilon}-x^{\varepsilon}(r)) \textup{d} k^{\varepsilon}(r) \leq 0.
$
\end{enumerate}
\begin{theorem} \cite[Theorem 4.17 page 252]{pardoux2014sdes}
Assume $h$ to be sufficiently smooth, $x_0 \in [0,\infty)$ and $m(.)$ is a continuous function with $m(0)=0$. Then the $\mathcal{GS}^{\varepsilon}(x_0,m)$ has a unique solution.
\end{theorem}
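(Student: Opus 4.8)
The plan is to recognise $\mathcal{GS}^{\varepsilon}(x_0,m)$ as the classical one–dimensional Skorokhod problem on $\R_+$, driven by the continuous path $g(t):=x_0+m_1(t)$ (with $m_1=\sqrt{\varepsilon}\,\omega^1$), decoupled from the $y$–component. Indeed, given any continuous $x$ with $x(t)\ge 0$, the function $y(t):=\int_0^t h(x(s))\,\dd s+m_2(t)$ is well defined and continuous (continuity of $h$ suffices), and condition~4 forces it to be the unique admissible $y$; so the whole problem reduces to producing a \emph{unique} pair $(x,k)$ of continuous functions with $x(0)=x_0$, $k(0)=0$, $x\ge 0$, $k\in BV_{loc}$, $x+k=g$, and satisfying condition~5. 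First I would unpack condition~5. On a compact interval $[s,t]$, inequality~5 reads $z\,(k(t)-k(s))-\int_s^t x\,\dd k\le 0$ for all $z\ge 0$; dividing by $z$ and letting $z\to\infty$ gives $k(t)-k(s)\le 0$, so $k$ is non-increasing and $\dd k\le 0$; taking $z=0$ gives $\int_s^t x\,\dd k\ge 0$, which together with $x\ge 0$ and $\dd k\le 0$ forces $\int_s^t x\,\dd k=0$. Hence condition~5 is equivalent to the familiar trio: $k$ non-increasing, $k(0)=0$, and $\dd k$ supported on $\{x=0\}$.

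For existence I would use the explicit Skorokhod map already written down in Section~2 (there with $k$ playing the role of $\Delta$): set
\[
k(t):=\min_{0\le s\le t}\min\big(g(s),0\big),\qquad x(t):=g(t)-k(t).
\]
Continuity of $k$ is inherited from $g$ (a running minimum of a continuous function is continuous), $k(0)=\min(x_0,0)=0$ since $x_0\ge 0$, $x(0)=x_0$, $k$ is non-increasing hence of locally bounded variation, and $x(t)=g(t)-k(t)\ge g(t)-\min(g(t),0)\ge 0$. The one point requiring a short argument is that $\dd k$ charges only $\{x=0\}$: if $x(t_0)>0$, i.e.\ $g(t_0)>k(t_0)$, continuity of $g$ yields a neighbourhood of $t_0$ on which $\min(g(\cdot),0)$ stays strictly above $k(t_0)$, so the running infimum $k$ is constant there. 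Equipping this $(x,k)$ with the $y$ defined by the integral formula produces a solution.

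Uniqueness I would get from the standard squared–difference estimate. If $(x_1,y_1,k_1)$ and $(x_2,y_2,k_2)$ are two solutions, then $x_i+k_i=g$ gives $x_1-x_2=k_2-k_1$, a continuous function of locally bounded variation vanishing at $t=0$, so
\[
\big(x_1(t)-x_2(t)\big)^2 = 2\int_0^t (x_1-x_2)\,\dd k_2 \;-\; 2\int_0^t (x_1-x_2)\,\dd k_1 .
\]
Since $\dd k_i\le 0$, $x_j\ge 0$, and $\int_0^t x_i\,\dd k_i=0$ by the previous step, both integrals on the right-hand side are $\le 0$; hence $x_1\equiv x_2$, then $k_1\equiv k_2$, and finally $y_1\equiv y_2$ from the integral formula.

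The only genuinely delicate point is the support identity $\operatorname{supp}(\dd k)\subseteq\{x=0\}$ in the existence step; everything else is bookkeeping with monotone/$BV$ functions and Stieltjes integrals. (In \cite[Theorem~4.17, p.~252]{pardoux2014sdes} the smoothness of $h$ is not needed for well-posedness of $\mathcal{GS}^{\varepsilon}$ itself—it secures the regularity of $y$ used afterwards in the filtering analysis—so here it enters only as a standing hypothesis.)
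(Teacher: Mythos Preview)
The paper does not give its own proof of this statement; it is quoted directly from \cite[Theorem~4.17]{pardoux2014sdes} and used as a black box. Your argument is correct and is exactly the classical treatment of the one–dimensional Skorokhod problem on $\R_+$: decouple $y$, rewrite condition~5 as ``$k$ non-increasing with $\int x\,\dd k=0$'', produce the running-minimum solution, and obtain uniqueness from the energy identity $(x_1-x_2)^2=2\int_0^t(x_1-x_2)\,\dd(k_2-k_1)\le 0$. This is also how the cited reference proceeds, so there is no methodological difference to discuss. Your remark that smoothness of $h$ is inessential for well-posedness of $\mathcal{GS}^{\varepsilon}$ itself is accurate.
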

\begin{theorem}
\cite[Theorem 4.16 page 247]{pardoux2014sdes}
The mapping $(x_0,m) \mapsto (x^{\varepsilon},y^{\varepsilon}) = \mathcal{GS}^{\varepsilon}(x_0,m)$ is continuous from
$[0,\infty) \times \mathcal{C}([0,T];\mathbb{R}^d) \to \mathcal{C}([0,T];\mathbb{R}^2)$.
\end{theorem}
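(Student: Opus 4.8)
The plan is to decouple the two output components of $\mathcal{GS}^{\varepsilon}$ and to exploit the closed form of the one-dimensional Skorokhod map. First I would observe that, writing $m = (m_1, m_2)$, the pair $(x^{\varepsilon}, k^{\varepsilon})$ is exactly the solution of the classical Skorokhod problem with a single reflecting barrier at $0$ driven by the path $t \mapsto \xi + m_1(t)$; by the uniqueness part of the preceding theorem it must coincide with the explicit solution
\begin{equation*}
k^{\varepsilon}(t) = \min_{0 \le s \le t} \min\bigl( 0,\; \xi + m_1(s) \bigr), \qquad x^{\varepsilon}(t) = \xi + m_1(t) - k^{\varepsilon}(t),
\end{equation*}
which is the $f \equiv 0$ instance of the representation recalled in Section~2. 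One checks directly that this $k^{\varepsilon}$ is nonincreasing (hence of locally bounded variation), that $x^{\varepsilon} \ge 0$, and that $\mathrm{d} k^{\varepsilon}$ is carried by $\{ x^{\varepsilon} = 0 \}$, so that condition~5 of $\mathcal{GS}^{\varepsilon}$ holds.

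Next I would prove that this Skorokhod map is Lipschitz for the uniform norm. Given two data $(\xi, m)$ and $(\bar\xi, \bar m)$, set $\phi(t) := \xi + m_1(t)$ and $\bar\phi(t) := \bar\xi + \bar m_1(t)$; since the map $u \mapsto \min_{0 \le s \le t} \min(0, u(s))$ is $1$-Lipschitz for the sup norm on $[0,t]$, I obtain for every $t \in [0,T]$
\begin{equation*}
\bigl| k^{\varepsilon}(t) - \bar k^{\varepsilon}(t) \bigr| \le \sup_{0 \le s \le t} | \phi(s) - \bar\phi(s) | \le |\xi - \bar\xi| + \| m_1 - \bar m_1 \|_{\infty, [0,T]},
\end{equation*}
and hence $\| x^{\varepsilon} - \bar x^{\varepsilon} \|_{\infty, [0,T]} \le 2 \bigl( |\xi - \bar\xi| + \| m_1 - \bar m_1 \|_{\infty, [0,T]} \bigr)$. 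This already gives the (Lipschitz) continuity of $(\xi, m) \mapsto x^{\varepsilon}$.

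It then remains to transfer the estimate to $y^{\varepsilon}$. From $y^{\varepsilon}(t) = \int_0^t h(x^{\varepsilon}(s))\, \mathrm{d} s + m_2(t)$ and the a priori bound $0 \le x^{\varepsilon}(t) \le \xi + 2\| m_1 \|_{\infty,[0,T]}$ read off the formula above, one sees that along any convergent sequence of data the trajectories $x^{\varepsilon}$ stay in a fixed compact subset of $\RR_+$, on which $h$ --~being $C^1$, or merely locally Lipschitz~-- admits a Lipschitz constant $L$; hence
\begin{equation*}
\| y^{\varepsilon} - \bar y^{\varepsilon} \|_{\infty, [0,T]} \le L\, T\, \| x^{\varepsilon} - \bar x^{\varepsilon} \|_{\infty, [0,T]} + \| m_2 - \bar m_2 \|_{\infty, [0,T]},
\end{equation*}
and combining with the previous bound yields the continuity --~in fact the local Lipschitz continuity~-- of $(\xi, m) \mapsto (x^{\varepsilon}, y^{\varepsilon})$ from $[0,\infty) \times \mathcal{C}([0,T];\RR^d)$ into $\mathcal{C}([0,T];\RR^2)$.

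The only genuinely delicate ingredient is the Lipschitz bound for the Skorokhod map. In the present one-dimensional, single-barrier, constant-coefficient situation it is elementary thanks to the closed form, but in the general framework of \cite{pardoux2014sdes} (convex domains, state-dependent coefficients) it is obtained through the energy and Gronwall-type estimates developed there, and that is where the real work would lie. One should also keep in mind that the statement only makes sense once uniqueness of $\mathcal{GS}^{\varepsilon}(x_0,m)$ has been established, which is precisely the content of the preceding theorem.
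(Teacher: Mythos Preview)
The paper does not actually prove this theorem: it is quoted verbatim from \cite[Theorem 4.16]{pardoux2014sdes} and used as a black box, so there is no ``paper's own proof'' to compare against. Your argument is correct for the specific setting at hand (one spatial dimension, single barrier at $0$, drift $f\equiv 0$), and indeed more elementary than the general result in \cite{pardoux2014sdes}: you exploit the closed-form Skorokhod map $k^\varepsilon(t)=\min_{s\le t}\min(0,\xi+\sqrt{\varepsilon}\,\omega^1(s))$, read off its $1$-Lipschitz dependence on the driving path, and then propagate to $y^\varepsilon$ via the local Lipschitz continuity of $h$ on the uniform range of $x^\varepsilon$. By contrast, Pardoux's Theorem 4.16 is stated for multivalued maximal monotone constraints and state-dependent coefficients, where no explicit formula is available and one must instead use the variational inequality itself (energy estimates plus Gronwall) to control $\|x-\bar x\|_\infty$ --- exactly the ``real work'' you allude to in your last paragraph.

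Two minor cosmetic points: (i) in the paper's notation the input $m$ is the Brownian path $\omega=(\omega^1,\omega^2)$ and the driving increment is $\sqrt{\varepsilon}\,\omega^1$, so your $m_1$ should carry a $\sqrt{\varepsilon}$; this only rescales the Lipschitz constants and changes nothing structural. (ii) Your a~priori bound $0\le x^\varepsilon\le \xi+2\|m_1\|_\infty$ is correct but worth one line of justification, since it is what lets you localise $h$ to a compact and hence use only its $C^1$ regularity.
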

\begin{theorem} \cite[Theorem 4.18 page 257]{pardoux2014sdes}
The stochastic variational inequality \eqref{svi} has a unique solution $(X^{\varepsilon},Y^{\varepsilon},K^{\varepsilon})$ progressively measurable with continuous path in the sense of the definition above.
\end{theorem}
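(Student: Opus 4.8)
The plan is to reduce the stochastic variational inequality \eqref{svi} to the deterministic generalized Skorokhod problem $\mathcal{GS}^{\varepsilon}$ by freezing the Brownian trajectory, exploiting --- as already observed above --- that the coefficients multiplying $B^1$ and $B^2$ are constant; throughout, ``solution of \eqref{svi}'' is understood as the triple satisfying properties 1--5 $\mathbb{P}$-a.s. Fix an arbitrary $\omega = (\omega^1,\omega^2) \in \Omega = \mathcal{C}_0([0,\infty);\RR^2)$. The data $(x_0,m) := (\xi,\sqrt{\varepsilon}\,\omega)$ is admissible for the deterministic problem, since $\xi \in [0,\infty)$, $h$ is smooth and $\sqrt{\varepsilon}\,\omega(0)=0$, so \cite[Theorem~4.17]{pardoux2014sdes} provides a unique triple $(x^{\varepsilon}(\cdot,\omega),y^{\varepsilon}(\cdot,\omega),k^{\varepsilon}(\cdot,\omega))$ solving $\mathcal{GS}^{\varepsilon}(\xi,\sqrt{\varepsilon}\,\omega)$ on each finite horizon (the infinite horizon following by the usual consistency/gluing argument). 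Set $X^{\varepsilon}_t(\omega):=x^{\varepsilon}(t,\omega)$, $Y^{\varepsilon}_t(\omega):=y^{\varepsilon}(t,\omega)$, $K^{\varepsilon}_t(\omega):=k^{\varepsilon}(t,\omega)$.

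For existence one first observes that properties 1--5 in the pathwise definition of a solution of \eqref{svi} coincide termwise with properties 1--5 of $\mathcal{GS}^{\varepsilon}$: $\|K^{\varepsilon}\|_T<\infty$ for all $T$ is $k^{\varepsilon}\in BV_{loc}$, $X^{\varepsilon}_t\ge 0$ is property 2, the identities $X^{\varepsilon}_t+K^{\varepsilon}_t=\xi+\sqrt{\varepsilon}B^1_t$ and $Y^{\varepsilon}_t=\int_0^t h(X^{\varepsilon}_s)\,\mathrm{d}s+\sqrt{\varepsilon}B^2_t$ are property 4, and the obstacle inequality is property 5; hence all hold for \emph{every} $\omega$, a fortiori $\mathbb{P}$-a.s., and $K^{\varepsilon}_0=0$. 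Measurability is the only point requiring a separate argument: by \cite[Theorem~4.16]{pardoux2014sdes} the map $\omega\mapsto(x^{\varepsilon}(\cdot,\omega),y^{\varepsilon}(\cdot,\omega))$ is continuous, hence Borel, from $\mathcal{C}_0([0,\infty);\RR^2)$ into $\mathcal{C}([0,\infty);\RR^2)$, and the one-dimensional Skorokhod reflection is causal, so $x^{\varepsilon}(t,\omega)$ depends only on $\omega^1|_{[0,t]}$ and $y^{\varepsilon}(t,\omega)$ only on $(x^{\varepsilon}|_{[0,t]},\omega^2|_{[0,t]})$, making $X^{\varepsilon}_t,Y^{\varepsilon}_t$ and then $K^{\varepsilon}_t=\xi+\sqrt{\varepsilon}\,\omega^1(t)-X^{\varepsilon}_t$ each $\mathcal{F}_t$-measurable. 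Since an adapted process with continuous paths is progressively measurable, $(X^{\varepsilon},Y^{\varepsilon},K^{\varepsilon})$ is a progressively measurable solution.

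For uniqueness, if $(\tilde X^{\varepsilon},\tilde Y^{\varepsilon},\tilde K^{\varepsilon})$ is any solution of \eqref{svi}, then for $\mathbb{P}$-a.e.\ $\omega$ its sample paths satisfy properties 1--5 of $\mathcal{GS}^{\varepsilon}(\xi,\sqrt{\varepsilon}\,\omega)$, so by the pathwise uniqueness in \cite[Theorem~4.17]{pardoux2014sdes} they agree with $(x^{\varepsilon}(\cdot,\omega),y^{\varepsilon}(\cdot,\omega),k^{\varepsilon}(\cdot,\omega))$; intersecting the full-measure sets over $\omega$ yields indistinguishability. The only genuinely delicate point is the measurability step, and even there the work is light once \cite[Theorem~4.16]{pardoux2014sdes} is in hand; everything else is bookkeeping, the definitions of ``solution of \eqref{svi}'' and ``solution of $\mathcal{GS}^{\varepsilon}$'' having been arranged to be pathwise identical. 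One may add as a remark that properties 1--5 also yield the integral form displayed in \eqref{svi}: property 5 with constant test points forces $\mathrm{d}K^{\varepsilon}\le 0$ with $\mathrm{d}K^{\varepsilon}$ carried by $\{X^{\varepsilon}=0\}$, so $\int_s^t X^{\varepsilon}_r\,\mathrm{d}K^{\varepsilon}_r=0$; since $X^{\varepsilon}_r\ge 0$ the term $\mathcal{I}_{\RR_+}(X^{\varepsilon}_r)$ vanishes, the displayed inequality is vacuous unless the progressively measurable process $Z$ is $\RR_+$-valued $\mathrm{d}r$-a.e., and then $\sqrt{\varepsilon}\,\mathrm{d}B^1_r-\mathrm{d}X^{\varepsilon}_r=\mathrm{d}K^{\varepsilon}_r$ reduces it to $\int_s^t Z_r\,\mathrm{d}K^{\varepsilon}_r\le 0$, which holds since $Z_r\ge 0$ and $\mathrm{d}K^{\varepsilon}_r\le 0$.
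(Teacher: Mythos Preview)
The paper does not give its own proof of this statement: it is quoted verbatim as \cite[Theorem 4.18 page 257]{pardoux2014sdes}, with no accompanying argument. Your proposal is correct and is precisely the intended reduction --- freeze $\omega$, invoke the deterministic well-posedness of $\mathcal{GS}^{\varepsilon}$ (Theorem 4.17), and recover progressive measurability from the continuity of the solution map (Theorem 4.16) together with the causal structure of the Skorokhod reflection; this is also the route taken in the cited reference, so there is nothing further to compare.
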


The stochastic filtering problem of reflected diffusions has been tackled in \cite{pardoux1978stochastic,pardoux1978filtrage,menaldi1982stochastic,lions1985optimal,banek1986filtering}...
As in section \ref{sec:pensto}, the unnormalized conditional density $q^{\varepsilon}(x,t)$ can be defined for the stochastic filtering problem of the constrained dynamics, and it solves the Zakai equation with boundary condition
\begin{equation} \label{ZakaiBiundary}
\begin{cases}
\dd q^{\varepsilon}(x,t) = \frac{\varepsilon}{2} \partial^2_{xx} q^{\varepsilon}(x,t) + \dfrac{ q^{\varepsilon}(x,t) }{\varepsilon} \dd Y^{\varepsilon}_t, &(x,t) \in \RR^+\times \RR^+ \\
q^{\varepsilon}(0,x) = q^{\varepsilon}_0 (x)& x \in\RR^+, \\ 
\partial_{x} q^{\varepsilon}(t,0) = 0, &t \in\RR^+,
\end{cases}
\end{equation}
for which a rigorous meaning is given in \cite{pardoux1978stochastic,Pardouxt1980StochasticPD,pardoux1982equations}. Given a realisation $(y(t))_{0\leq t\leq T}$ of $\left(Y^{\varepsilon}_t\right)_{0\leq t\leq T}$, the change of variable
\begin{equation}
p^{\varepsilon}(x,t)=\mathrm{exp}\Big(-\frac{1}{\varepsilon}y(t)h(x)\Big)q^{\varepsilon}(x,t),
\end{equation}
now leads to the robust Zakai equation with boundary condition
\begin{equation} 
\begin{cases}
\partial_t p^{\varepsilon} (x,t) -y(t) h'(x) \partial_x p^{\varepsilon} (x,t) + \dfrac{1}{\varepsilon} \potential^{\varepsilon}(x,t) p^{\varepsilon} (x,t) = \dfrac{\varepsilon}{2} \partial^2_{xx} p^{\varepsilon}(x,t), &(x,t) \in \RR^+\times \RR^+ \\ 
\frac{\varepsilon}{2} \partial_{x} p^{\varepsilon} (t,0) + \dfrac{y(t) h'(x)}{2} p^{\varepsilon} (0,t) = 0, &t\in \RR^+,
\end{cases}
\end{equation}
for a given a realisation $(y(t))_{0\leq t \leq T}$ of $\left( Y^{\varepsilon}_t \right)_{0 \leq t \leq T}$, where 
\begin{equation}
\potential^{\varepsilon}(x,t)=\frac{1}{2}h^2(x)- \frac{\varepsilon}{2} y(t) h''(x)-\frac{1}{2}y^2(t)|h'(x)|^2.
\end{equation}
Details on this derivation are given in appendix \ref{zakai}. By the Hopf-Cole transform
\begin{equation}
\entropy^{\varepsilon}(x,t)=-\varepsilon\log p^{\varepsilon}(x,t)
\end{equation}
The robust Zakai equation can be converted into the Bellman equation \eqref{HJBVisc} on $S^{\varepsilon}(x,t)$ with boundary condition
\begin{equation*} 
\begin{cases}
\partial_t \entropy^{\varepsilon} (x,t) + \hamilton_{\entropy}^{\varepsilon} \left(x,t,\partial_x \entropy^{\varepsilon}(x,t) \right) = \dfrac{\varepsilon}{2} \partial^2_{xx} \entropy^{\varepsilon}(x,t), &(x,t) \in \RR^+\times \RR^+, \\ 
\partial_{x} \entropy^{\varepsilon} (0,t) - y(t) h'(0) = 0, &t\in\RR^+,
\end{cases}
\end{equation*}
the Hamiltonian $\hamilton_{\entropy}^\varepsilon$ being defined in \eqref{eq:def_H_S_eps} as
\begin{equation*}
\hamilton_{\entropy}^\varepsilon  :
\left\{\begin{array}{rl}
\RR_+ \times \RR_+ \times \RR &\to \RR \\
(x,t,\lambda) &\mapsto \frac{\lambda^2}{2} - \lambda y(t) h'(x) -\potential^{\varepsilon}(x,t),
\end{array}\right.
\end{equation*}

\section{A vanishing viscosity procedure for the limit problem} \label{sec:HJB+}

For the sake of generality, we assume in this section that $f \neq 0$, with only $f(0) = 0$ to avoid additional technical problems at the boundary --~see Remark~\ref{rem:boundary-extension} for comments on the completely general case.
We assume that $f$ and $y$ are bounded $C^1$ functions with bounded first derivatives, and $h$ is a bounded $C^2$ function with bounded derivatives up to order $2$ . 

Starting from the stochastic filtering problem of the constrained dynamics and inspired by~\cite{JB88}, we introduce the Hamilton-Jacobi equation~\eqref{HJBVisc} formally satisfied by the Hopf-Cole transform of the solution of the robust Zakai equation as done in the previous section. 
We prove a stability result that allows us to recover, in the vanishing viscosity limit, what we will interpret in section~\ref{ssec:control_interpretation} as a deterministic limit of the stochastic filtering problem. Consider 

\begin{equation}	\label{HJBVisc}		
\left\{\begin{aligned}
\partial_t \entropy^\varepsilon (x,t) + \hamilton_{\entropy}^\varepsilon(x,t, \partial_x \entropy^\varepsilon (x,t)) & = \frac{\varepsilon}{2} \partial_{xx}^2 \entropy^\varepsilon(x,t), \qquad & x \in \RR_+^*, & \;\;  t > 0, \\
- \partial_x \entropy^\varepsilon (0,t) & = - y(t) h'(0), \qquad & x = 0, & \;\;  t > 0, \\
\entropy^\varepsilon(x,0) & = S_0(x), \qquad & x \in \RR_+, & \;\;  t = 0,
\end{aligned} \right.
\end{equation}
for some initial condition $S_0 \in {\rm{BUC}}(\RR_+; \RR)$, the Hamiltonian $\hamilton_{\entropy}^\varepsilon$ being defined for $\varepsilon>0$ as 
\begin{equation}\label{eq:def_H_S_eps}
\hamilton_{\entropy}^\varepsilon  : \left\{
		\begin{array}{rl}
			\RR_+ \times \RR_+ \times \RR & \!\!\!\to\!\!\! \quad  \RR \\
			(x,t,\lambda) & \!\!\!\mapsto\!\!\! \quad  \dfrac{\lambda^2}{2} + \lambda g_\entropy(x,t) - \left[ \dfrac{h(x)^2}{2} + y(t) L_\varepsilon h(x) - \dfrac{1}{2} y(t)^2 |h'(x)|^2 + \varepsilon \partial_x g_\entropy (x,t) \right], 
		\end{array}\right.
\end{equation}
where by analogy with~\cite{JB88}, we set
\begin{equation}\label{eq:JB88DefgLV}
\left|\,
\begin{aligned}
g_\entropy (x,t) & := f(x) - y(t) h'(x), \\
L_\varepsilon & := \frac{\varepsilon}{2} \partial_{xx}^2 + f(x) \partial_x. 
\end{aligned} \right.
\end{equation}

Contrary to James and Baras~\cite{JB88} who started from the stochastic setting, this deterministic equation will be our starting point, not requiring any previous result on the robust Zakai equation with boundary conditions, and defining $\entropy^\varepsilon$ as solution of~\eqref{HJBVisc} rather than as the value function resulting from a dynamic programming approach.


\begin{remark}\label{rem:boundary-extension}
If $f(0) \neq 0$, the second line of equation~\eqref{HJBVisc} reads instead
\[
- \partial_x \entropy^\varepsilon (0,t) = - y(t) h'(0) - 2 f(0), \qquad x = 0, \;\;  t > 0.
\]
We may add an appropriate smooth, bounded perturbation of bounded derivatives to $\entropy^\varepsilon$, defining for instance:
\[
 \bar \entropy^\varepsilon (x,t) := \entropy^\varepsilon (x,t) - 2 x f(0) e^{-x^2},
\]
so that $- \partial_x \bar \entropy^\varepsilon (x,t) = - \partial_x \entropy^\varepsilon (x,t) + 2 f(0)$. We thus recover a function satisfying a closely related viscous Hamilton-Jacobi equation whose Hamiltonian can be easily computed. That new Hamiltonian satisfies the same sufficient properties for the rest of the section, and the boundary condition of the new equation does not involve $f$. Hence, similar results will hold, so to avoid unnecessary technicalities, we choose to take $f(0) = 0$ hereafter.
\end{remark}

\subsection{Viscous Hamilton-Jacobi equation on $\entropy^\varepsilon$}

We denote the formal limit of $\hamilton_{\entropy}^\varepsilon$ as $\varepsilon \to 0$ by
\begin{equation}\label{eq:def_H_S_1d}
\hamilton_{\entropy}: \left\{
\begin{array}{ll}
  \RR_+ \times \RR_+ \times \RR & \to \quad \RR \\
 (x,t,\lambda) & \mapsto \quad  \frac{\lambda^2}{2} + \lambda g_\entropy (x,t) - \frac{(h(x))^2}{2} - y(t) f(x) h'(x) + \frac{1}{2}(y(t))^2 |h'(x)|^2.
\end{array}\right.
\end{equation}
The main theorem of the section is the following stability result. 

\begin{theorem} \label{thm:S}
Assume that $f, y \in C^1_b(\RR_+; \RR)$ and $h \in C^2_b(\RR_+; \RR)$ are bounded with first (and second for $h$) bounded derivatives, and $S_0 \in {\rm BUC}(\RR_+; \RR)$. Then:
\begin{description}
\item[$(i)$]
for all $\varepsilon > 0$ the second order evolution Hamilton-Jacobi equation
\begin{equation}\label{eq:S_epsilon} 
\left\{\begin{aligned}
\partial_t \entropy^\varepsilon (x,t) + \hamilton_{\entropy}^\varepsilon(x,t, \partial_x \entropy^\varepsilon (x,t)) & = \frac{\varepsilon}{2} \partial_{xx}^2 \entropy^\varepsilon (x,t), \qquad & x \in \RR_+^*, & \;\;  t > 0, \\
- \partial_x \entropy^\varepsilon (0,t) & = - y(t) h'(0), \qquad & x = 0, & \;\;  t > 0, \\
\entropy^\varepsilon(x,0) & = S_0(x), \qquad & x \in \RR_+, & \;\;  t = 0,
\end{aligned} \right.
\end{equation}
admits a unique smooth solution $S^{\varepsilon}$.
\item[$(ii)$]
The Hamiltonian $\hamilton_{\entropy}^\varepsilon$ defined in~\eqref{eq:def_H_S_eps} converges locally uniformly as $\varepsilon \to 0$ to the limiting Hamiltonian $\hamilton_{\entropy}$ of equation~\eqref{eq:def_H_S_1d}, 
\item[$(iii)$]
$\entropy^\varepsilon$ converges locally uniformly as $\varepsilon \to 0$ to a continuous function we denote by $\entropy$, 
\item[$(iv)$]
$\entropy$ is the unique viscosity solution of the limiting Hamilton-Jacobi equation below, in the sense of Definition~\ref{def:visc_sol_HJ}.
\begin{equation}\label{eq:S} 
\left\{\begin{aligned}
\partial_t \entropy (x,t) + \hamilton_{\entropy}(x,t, \partial_x \entropy (x,t)) & = 0, \qquad & x \in \RR_+^*, & \;\;  t > 0, \\
- \partial_x \entropy (0,t) & = - y(t) h'(0), \qquad & x = 0, & \;\;  t > 0, \\
\entropy(x,0) & = \entropy_0(x), \qquad & x \in \RR_+, & \;\;  t = 0.
\end{aligned} \right.
\end{equation}
\end{description}
\end{theorem}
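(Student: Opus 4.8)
The plan is to establish the four claims sequentially, treating (i) and the a priori estimates as the technical core, then deducing (ii)–(iv) by standard vanishing-viscosity machinery adapted to the Neumann boundary. For part (i), I would first extend the problem to the whole line by the reflection trick mentioned in the introduction (following~\cite{Strauss08}): given the Neumann condition $-\partial_x \entropy^\varepsilon(0,t) = -y(t) h'(0)$, subtract off the affine-in-$x$ corrector $x\, y(t) h'(0)$ (or a smooth bounded variant) to reduce to a homogeneous Neumann condition, then extend $\entropy^\varepsilon$ evenly across $x=0$. One must check that the reflected Hamiltonian $\hamilton_\entropy^\varepsilon$ extends to a function on $\RR\times\RR_+\times\RR$ that is still $C^1$ in $x$ across $0$ — this uses $f(0)=0$ and $h\in C^2_b$, which is exactly why that hypothesis was imposed (cf.\ Remark~\ref{rem:boundary-extension}). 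On the extended equation, existence and uniqueness of a smooth bounded solution $\entropy^\varepsilon$ follows from classical quasilinear parabolic theory (Ladyzhenskaya–Solonnikov–Uraltseva): the nonlinearity is quadratic in $\partial_x\entropy^\varepsilon$, so one needs a gradient bound to linearise, which comes from a Bernstein-type argument or from the Hopf–Cole link $p^\varepsilon = \exp(-\entropy^\varepsilon/\varepsilon)$ solving a linear equation with bounded coefficients. Uniqueness follows from the comparison principle for the uniformly parabolic equation.

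For part (ii), the convergence $\hamilton_\entropy^\varepsilon \to \hamilton_\entropy$ locally uniformly is immediate by inspection of~\eqref{eq:def_H_S_eps} and~\eqref{eq:def_H_S_1d}: the only $\varepsilon$-dependent terms are $\tfrac{\varepsilon}{2} y(t) h''(x)$ inside $L_\varepsilon h$ and $\varepsilon\,\partial_x g_\entropy(x,t)$, both of which are bounded (using $h\in C^2_b$, $f,y\in C^1_b$) and hence vanish uniformly on compact sets; the term $y(t) f(x) h'(x)$ in $\hamilton_\entropy$ is the $\varepsilon\to 0$ limit of $y(t) L_\varepsilon h(x)$'s first-order part. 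This is a one-line verification once the formulas are written out.

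For part (iii), the key point — and, as the text flags, the main obstacle — is that the limit $\entropy$ is \emph{not} a priori the value function of a control problem, so we cannot get uniform bounds on $\entropy^\varepsilon$ from a dynamic-programming representation as in~\cite{JB88}. Instead I would prove directly, uniformly in $\varepsilon$: (a) a uniform $L^\infty$ bound via the comparison principle, constructing explicit sub/supersolutions of the form $\pm(C_1 + C_2 t)$ plus the boundary corrector, using boundedness of $h, y, h'$ to bound the potential term and thus the source; (b) uniform spatial equicontinuity, by differentiating the equation or by comparing $\entropy^\varepsilon(\cdot+\delta,\cdot)$ with $\entropy^\varepsilon(\cdot,\cdot)$ shifted (again the comparison principle on the extended domain, exploiting the modulus of continuity of $S_0$ and the Lipschitz-in-$x$ structure of the data); (c) uniform time equicontinuity, deduced from (a)–(b) plus the equation, or by the standard barrier argument $|\entropy^\varepsilon(x,t) - \entropy^\varepsilon(x,s)| \leq C|t-s|$ built from the spatial modulus. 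With these in hand, Arzelà–Ascoli gives a locally uniformly convergent subsequence; the limit is identified in (iv), whence the whole family converges.

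For part (iv), I would pass to the limit in the viscosity sense: since $\entropy^\varepsilon$ are classical (hence viscosity) solutions of the viscous equation and they converge locally uniformly, the stability theorem for viscosity solutions (the Barles–Perthame half-relaxed limits argument, adapted to the Neumann boundary condition à la Barles–Lions / the reflection reduction above) shows the limit $\entropy$ is a viscosity solution of~\eqref{eq:S} with the boundary condition interpreted in the viscosity sense of Definition~\ref{def:visc_sol_HJ} — here one must be slightly careful that the viscous Neumann term $\tfrac{\varepsilon}{2}\partial_{xx}^2\entropy^\varepsilon$ does not disturb the boundary condition in the limit, which is handled by the even-reflection device. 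Uniqueness of the viscosity solution then follows from a comparison principle for~\eqref{eq:S}: $\hamilton_\entropy(x,t,\lambda)$ is convex (indeed quadratic) in $\lambda$ with coefficients Lipschitz in $x$ uniformly in $t$ on the relevant range, so the classical Crandall–Ishii–Lions doubling-of-variables argument applies, with the Neumann boundary term controlled through the reflected formulation. The one genuinely delicate step, worth isolating as a lemma, is the uniform spatial modulus of continuity in (iii)(b), because without a control representation it must be extracted purely from the PDE and the comparison principle, which is precisely the ``sharper estimate of $w^\varepsilon$'' the introduction promises.
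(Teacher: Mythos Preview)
Your plan matches the paper's architecture closely: reduce to a homogeneous Neumann problem via a corrector, reflect to the whole line, derive $\varepsilon$-uniform bounds, then pass to the limit by Arzel\`a--Ascoli and Barles' stability/comparison framework for Neumann problems. Two implementation choices differ and are worth noting.

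First, the paper subtracts the full corrector $y(t)h(x)$, setting $w^\varepsilon := S^\varepsilon - y(t)h(x)$, rather than your affine $x\,y(t)h'(0)$; this not only homogenises the boundary but simultaneously cancels the $-y(t)h'(x)$ piece of $g_S$, leaving the simpler Hamiltonian~\eqref{eq:def_H_w_eps} with drift $\lambda f(x)$ and making the even reflection cleaner (only $\mathrm{sgn}(x)f(|x|)$ needs care, and $f(0)=0$ handles that). Second, and more substantively, your $L^\infty$ barrier $\pm(C_1 + C_2 t)$ presupposes a comparison principle on all of $\RR$ for a Hamiltonian quadratic in the gradient, which is not free: on a ball $B_R$ the lateral boundary inequality $\tilde w^\varepsilon \leq C_1 + C_2 t$ on $\{|x|=R\}$ would require an a priori $\varepsilon$-uniform bound on $\tilde w^\varepsilon$, which is precisely what you are trying to prove. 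The paper instead uses the barrier $v(x,t) = (R^2-|x|^2)^{-1} + \mu t + M$ that blows up at $|x|=R$, so the lateral inequality is automatic; the sharp estimate~(iv) then follows after tracking the $R$-dependence of the constants (this is the ``sharper estimate'' alluded to). For the gradient bound the paper runs a Bernstein argument with cutoff rather than your shift-and-compare; both are viable here, but Bernstein delivers a genuine Lipschitz bound for $t>0$ even from merely $\mathrm{BUC}$ data, and time equicontinuity then comes from \cite[Lemma~5.2]{CL84} rather than a separate barrier. Your Hopf--Cole shortcut for regularity is tempting but would need $\varepsilon$-uniform lower bounds on $p^\varepsilon$ to translate back, which is no easier.
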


Let us recall from \textit{e.g.} \cite{Lions:1985ve, Barles93} an appropriate notion of solution for the above Hamilton-Jacobi equations with Neumann boundary condition. Consider the first order Hamilton-Jacobi equation on $\RR_+$
\begin{equation}\label{eq:HJ_generic}
\left\{\begin{aligned}
\partial_t u(x,t) + H(x,t,u(x,t),\partial_x u(x,t)) = 0, & \quad x \in \RR_+^*, t > 0, \\
B(0,t, u(0,t), \partial_x u(0,t)) = 0 & \quad x = 0, t > 0, \\
u (x,t) = u_0 (x), & \quad x \in \RR_+, t = 0,
\end{aligned}\right.\end{equation}
for locally Lipschitz $H$ and $B$, the latter being strictly increasing with respect to its last variable in the outward normal direction at $x$:  for all $R > 0$, there exists $\nu_R > 0$ such that for all $(x, t, u, \lambda) \in \{0\} \times \RR_+ \times [-R, R] \times \RR$,
\begin{equation}\label{eq_Barles93_4}
B(x,t, u, \lambda + \alpha n(x)) - B(x,t,u,\lambda) \geq \nu_R \alpha,
\end{equation}
where $n(x)$ is the unit outward normal to $\partial \RR_+$ at $x$ -- so, $-1$. Note that the satisfaction of this condition is the reason for the $-$ sign preceding $\partial_x \entropy(0,t)$ in~\eqref{eq:S}, in which:
\[
B(x,t, \lambda) =  - \lambda + y(t) h'(0).
\]

\begin{definition} \label{def:visc_sol_HJ}
A continuous function $u$ is said to be a viscosity subsolution (respectively, supersolution) of equation~\eqref{eq:HJ_generic} if it satisfies that for all $\phi \in C^1( \RR_+ \times \RR_+; \RR)$, at each maximum point $(x_0, t_0) \in \RR_+ \times \RR_+$ of $u-\phi$, we have:
\begin{equation*}
\left\{\begin{aligned}
\text{If } (x_0, t_0) \in \RR_+^* \times \RR_+^*, & \qquad
	(\partial_t \phi + H(\cdot, u, \partial_x \phi))(x_0,t_0) \leq 0, \\
\text{If } (x_0, t_0) \in \{0\} \times \RR_+^*, & \qquad \min \left\{ B(\cdot, u, \partial_x \phi )(0,t_0) \; , \;  (\partial_t \phi + H(\cdot, u, \partial_x \phi))(0,t_0) \right\} \leq 0, \\
\text{If } (x_0, t_0) \in R^*_+ \times \{0\}, & \qquad 
\min \left\{ u(x_0,0) - u_0(x_0) \;, \; (\partial_t \phi + H(\cdot, u, \partial_x \phi))(x_0,0) \right\} \leq 0, \\
\text{If } (x_0, t_0) = (0,0), & \qquad \min \left\{ u(0, 0) - u_0(0) \;, \;B(\cdot, u, \partial_x \phi )(0,t_0)  \; , \; (\partial_t \phi + H(\cdot, u, \partial_x \phi))(0,0) \right\} \leq 0,
\end{aligned}\right.\end{equation*}

(respectively, for all $\phi \in C^2(\bar \RR_+ \times \RR_+; \RR)$, at each minimum point $(x_0, t_0) \in \RR_+ \times \RR_+)$ of $u-\phi$, we have:
\begin{equation*}
\left\{\begin{aligned}
\text{If } (x_0, t_0) \in \RR_+^* \times \RR_+^*, & \qquad
	(\partial_t \phi + H(\cdot, u, \partial_x \phi))(x_0,t_0) \geq 0, \\
\text{If } (x_0, t_0) \in \{0\} \times \RR_+^*, & \qquad \max \left\{ B(\cdot, u, \partial_x \phi )(0,t_0) \; , \;  (\partial_t \phi + H(\cdot, u, \partial_x \phi))(0,t_0) \right\} \geq 0, \\
\text{If } (x_0, t_0) \in R^*_+ \times \{0\}, & \qquad 
\max \left\{ u(x_0,0) - u_0(x_0) \;, \; (\partial_t \phi + H(\cdot, u, \partial_x \phi))(x_0,0) \right\} \geq 0, \\
\text{If } (x_0, t_0) = (0,0), & \qquad \max \left\{ u(0, 0) - u_0(0) \;, \;B(\cdot, u, \partial_x \phi )(0,t_0)  \; , \; (\partial_t \phi + H(\cdot, u, \partial_x \phi))(0,0) \right\} \geq 0,
\end{aligned}\right.\end{equation*}

A continuous function $u$ is said to be a viscosity solution of equation~\eqref{eq:HJ_generic} if it is both a viscosity subsolution and supersolution.
\end{definition}

\begin{theorem}[Uniqueness and conditional existence of ${\rm{BUC}}$ solutions -- Theorem 2.1 in \cite{Barles93}]  \label{thm:existence_and_uniqueness}
Assume the initial condition $u_0$ to be bounded and uniformly continuous. Assume $H$ and $B$ to be locally Lipschitz continuous, and that $H$ is locally uniformly Lipschitz continuous, convex and coercive in its last variable. Then, if $u$ and $v$ are respectively a bounded u.s.c. viscosity subsolution and a bounded l.s.c. viscosity supersolution of~\eqref{eq:HJ_generic}, then
\[
u \leq v \qquad {\text{on }} \quad \bar \Omega \times [0,T].
\]
Moreover, if such $u,v$ exist and $u=v=u_0$ on $\bar \Omega \times \{0\}$, then equation~\eqref{eq:HJ_generic} admits a continuous unique viscosity solution.
\end{theorem}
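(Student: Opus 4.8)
The statement is the comparison-and-existence theorem for first-order Hamilton--Jacobi equations under an oblique-derivative (Neumann-type) boundary condition, in the form due to Barles. The plan is to establish the comparison inequality $u\le v$ by the Crandall--Ishii--Lions doubling-of-variables argument, adapted to the boundary through hypothesis~\eqref{eq_Barles93_4}, and then to obtain uniqueness and conditional existence from it. I would begin with three reductions. If $H$ depends on its third argument $u$, the substitution $u\mapsto e^{-\Lambda t}u$, with $\Lambda$ the local Lipschitz constant of $H$ in that argument on $[-R,R]$ and $R:=\|u\|_\infty\vee\|v\|_\infty$, renders the transformed Hamiltonian nondecreasing in that argument while preserving convexity and coercivity in the last variable (in the application of interest $H$ is independent of $u$, so this step is vacuous). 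Since $B$ is affine with only a $t$-dependent inhomogeneity, I next subtract the explicit bounded $C^1$ function $\beta(x,t):=(1-e^{-x})\,y(t)\,h'(0)$, which satisfies $\partial_x\beta(0,t)=y(t)h'(0)$; then $u-\beta$ and $v-\beta$ solve a closely related equation --- with a Hamiltonian still locally Lipschitz, convex and coercive in its last variable --- but with the \emph{homogeneous} boundary condition $-\partial_x u(0,t)=0$. Finally, replacing $u$ by the strict subsolution $u_\eta:=u-\eta/(T-t)$ and proving $u_\eta\le v$ for every $\eta>0$, it suffices (letting $\eta\downarrow0$) to derive a contradiction from $M:=\sup_{\bar\Omega\times[0,T]}(u_\eta-v)>0$.

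\textbf{Doubling of variables and the boundary.} For parameters $\varepsilon,\gamma,\mu>0$ I consider
\[
\Phi_{\varepsilon,\gamma,\mu}(x,t,y,s):=u_\eta(x,t)-v(y,s)-\frac{|x-y|^2}{2\varepsilon}-\frac{|t-s|^2}{2\varepsilon}-\gamma\,(x^2+y^2)-\mu\,\bigl(e^{-x}+e^{-y}\bigr),
\]
where $-\gamma(x^2+y^2)$ is a localiser ensuring the supremum is attained at some $(x_\varepsilon,t_\varepsilon,y_\varepsilon,s_\varepsilon)$ (recall $u_\eta$ is u.s.c., $v$ is l.s.c., both bounded) and the bounded term $-\mu(e^{-x}+e^{-y})$ is the boundary device. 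Indeed, if $x_\varepsilon=0$ then the $x$-derivative of the subsolution's test function equals $-y_\varepsilon/\varepsilon-\mu\le-\mu<0$, so by~\eqref{eq_Barles93_4} applied to the homogeneous operator $B(0,t,\lambda)=-\lambda$ the boundary inequality $B(0,t_\varepsilon,u_\eta,\cdot)\le0$ is strictly violated; hence the interior inequality $\partial_t\phi+H\le0$ must hold at $(0,t_\varepsilon)$, and symmetrically the supersolution is forced to its interior inequality if $y_\varepsilon=0$. Standard penalisation estimates then apply: $\Phi_{\varepsilon,\gamma,\mu}\le M$; the quantity $\tfrac{|x_\varepsilon-y_\varepsilon|^2}{2\varepsilon}+\tfrac{|t_\varepsilon-s_\varepsilon|^2}{2\varepsilon}$ is bounded and tends to $0$ as $\varepsilon\to0$ for fixed $\gamma,\mu$; $x_\varepsilon,y_\varepsilon=O(\gamma^{-1/2})$; and $\sup\Phi_{\varepsilon,\gamma,\mu}\to M-O(\gamma)-O(\mu)$, which stays positive once $\gamma,\mu$ are small relative to $M$. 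Moreover $t_\varepsilon,s_\varepsilon>0$ for $\varepsilon$ small: in the second part of the statement because $(u_\eta-v)(\cdot,0)\le0$ by assumption, and for the comparison statement in general by first establishing $u(\cdot,0)\le u_0\le v(\cdot,0)$ from the relaxed initial conditions of Definition~\ref{def:visc_sol_HJ} and the uniform continuity of $u_0$.

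\textbf{The interior estimate and the contradiction.} At the maximiser --- interior, or on $\{x=0\}$ but then governed by the interior inequalities as just noted --- the subsolution inequality at $(x_\varepsilon,t_\varepsilon)$ and the supersolution inequality at $(y_\varepsilon,s_\varepsilon)$ have cancelling $\partial_t$ contributions up to the gain $\eta/(T-t_\varepsilon)^2$, leaving
\[
\frac{\eta}{(T-t_\varepsilon)^2}\leq H\bigl(y_\varepsilon,s_\varepsilon,v(y_\varepsilon,s_\varepsilon),p^{\mathrm{sup}}_\varepsilon\bigr)-H\bigl(x_\varepsilon,t_\varepsilon,u_\eta(x_\varepsilon,t_\varepsilon),p^{\mathrm{sub}}_\varepsilon\bigr),
\]
with $p^{\mathrm{sub}}_\varepsilon=\tfrac{x_\varepsilon-y_\varepsilon}{\varepsilon}+2\gamma x_\varepsilon-\mu e^{-x_\varepsilon}$ and $p^{\mathrm{sup}}_\varepsilon=\tfrac{x_\varepsilon-y_\varepsilon}{\varepsilon}-2\gamma y_\varepsilon+\mu e^{-y_\varepsilon}$, so $|p^{\mathrm{sub}}_\varepsilon-p^{\mathrm{sup}}_\varepsilon|\le2\gamma(x_\varepsilon+y_\varepsilon)+\mu(e^{-x_\varepsilon}+e^{-y_\varepsilon})=O(\gamma^{1/2})+O(\mu)$. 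Here convexity and coercivity of $H$ in the last variable are decisive: they force bounded viscosity subsolutions to be locally Lipschitz in $x$, uniformly up to the boundary (the homogeneous Neumann condition being compatible, via an even-reflection argument in the spirit of~\cite{Strauss08}), whence $|x_\varepsilon-y_\varepsilon|/\varepsilon\le C_0$ with $C_0$ independent of $\varepsilon,\gamma,\mu$, so $p^{\mathrm{sub}}_\varepsilon,p^{\mathrm{sup}}_\varepsilon$ stay in a fixed compact set on which $H$ has a fixed Lipschitz constant $L$ and modulus of continuity $\omega$. Splitting the right-hand side into a term controlled by $\omega(|x_\varepsilon-y_\varepsilon|+|t_\varepsilon-s_\varepsilon|)$ --- after using monotonicity in the third argument to replace $u_\eta(x_\varepsilon,t_\varepsilon)$ by $v(y_\varepsilon,s_\varepsilon)$ --- and a term bounded by $L\,|p^{\mathrm{sub}}_\varepsilon-p^{\mathrm{sup}}_\varepsilon|$, one gets $\eta/(T-t_\varepsilon)^2\le o_\varepsilon(1)+L\bigl(O(\gamma^{1/2})+O(\mu)\bigr)$. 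Sending $\varepsilon\to0$, then $\gamma\to0$, then $\mu\to0$ gives $\eta/T^2\le0$, a contradiction; hence $M\le0$, and undoing the three reductions yields $u\le v$ on $\bar\Omega\times[0,T]$.

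\textbf{Uniqueness and conditional existence.} Uniqueness is immediate: two continuous viscosity solutions are each a bounded u.s.c.\ subsolution and a bounded l.s.c.\ supersolution, so the comparison inequality applied in both directions forces them equal. For existence, assume the sub/supersolutions $u\le v$ with $u=v=u_0$ on $\bar\Omega\times\{0\}$, and let $W$ be the pointwise supremum of all bounded u.s.c.\ viscosity subsolutions of~\eqref{eq:HJ_generic} that lie below $v$; this family is nonempty (it contains $u$) and $u\le W\le v$. By Perron's method in the boundary-value setting, the u.s.c.\ envelope $W^*$ is a subsolution and the l.s.c.\ envelope $W_*$ a supersolution --- the only non-routine points being that the supremum operation retains the oblique-derivative branch of Definition~\ref{def:visc_sol_HJ} and that the bump construction proving $W_*$ supersolution respects it, both holding because the boundary inequalities are stable under the relevant passages to the limit and localisations. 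Since $u\le W_*\le W^*\le v$ and $u=v=u_0$ at $t=0$, both envelopes coincide with $u_0$ on the initial slice, so the comparison inequality applied to $W^*$ and $W_*$ gives $W^*\le W_*$; together with $W_*\le W^*$ this makes $W^*=W_*=W$ continuous, hence a viscosity solution, necessarily unique. The principal obstacle throughout is the boundary analysis in the doubling step --- using~\eqref{eq_Barles93_4} to dispose of maximisers on $\{x=0\}$ while keeping the gradient discrepancy $p^{\mathrm{sub}}_\varepsilon-p^{\mathrm{sup}}_\varepsilon$ of order $o(1)$ --- together with the a priori Lipschitz bound on subsolutions extracted from coercivity, absent which $p^{\mathrm{sub}}_\varepsilon,p^{\mathrm{sup}}_\varepsilon$ would not remain in a fixed compact set.
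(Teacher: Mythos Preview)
The paper does not prove this statement: Theorem~\ref{thm:existence_and_uniqueness} is quoted verbatim as Theorem~2.1 of~\cite{Barles93} and used as a black box (see the remark immediately following it, which only checks that the structural hypotheses of~\cite{Barles93} are met in the present one-dimensional setting). There is therefore no ``paper's own proof'' to compare against.

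Your sketch follows the standard route one would expect for such a result --- doubling of variables with a localiser, a boundary-pushing penalty exploiting the obliqueness condition~\eqref{eq_Barles93_4}, the $\eta/(T-t)$ strict-subsolution device, and Perron's method for conditional existence --- and the mechanism by which the term $-\mu(e^{-x}+e^{-y})$ forces the interior inequalities at boundary maximisers is correctly identified. Two points deserve tightening. First, your reduction to a homogeneous boundary condition via the explicit corrector $\beta(x,t)=(1-e^{-x})y(t)h'(0)$ uses the particular affine structure of the boundary operator in this paper's application; the theorem as stated allows a general locally Lipschitz $B$ satisfying~\eqref{eq_Barles93_4}, so either restrict the claim or indicate how the argument proceeds without that reduction (Barles handles the general case directly). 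Second, the step ``convexity and coercivity force bounded subsolutions to be locally Lipschitz up to the boundary, via an even-reflection argument'' is the genuine technical heart of Barles' proof and is not adequately justified here: the even reflection of a subsolution is not obviously a subsolution of a symmetrised problem unless the Hamiltonian and boundary data cooperate, and in~\cite{Barles93} the Lipschitz bound is obtained by a separate barrier construction exploiting coercivity. Without a uniform bound on $|x_\varepsilon-y_\varepsilon|/\varepsilon$ the gradients $p^{\mathrm{sub}}_\varepsilon,p^{\mathrm{sup}}_\varepsilon$ need not stay in a fixed compact set, and the contradiction step collapses; you have correctly flagged this as the principal obstacle, but the resolution you offer is a gesture rather than an argument.
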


\begin{remark}
There are crucial hypotheses of Barles' theorem above that become immediate in our one spatial dimension, first order Hamiltonian setting. First, the open set $\R_+^*$ trivially satisfies that $\partial \R_+^* = \{0\} \in W^{3,\infty}$. Second, the structure hypotheses labeled $(H1)$, $(H2)$, and $(H3)$ in~\cite{Barles93} are clearly satisfied by a first order Hamiltonian $H$ and by our boudary condition, and boil down to . 
\end{remark}

To consider homogeneous Neumann conditions, let's work on $w^{\varepsilon}(x,t) := S^{\varepsilon}(x,t) - y(t)h(x)$, instead of directly $S^{\varepsilon}$. $w^{\varepsilon}$ is given as the solution of 
\begin{equation}\label{eq:w_epsilon_1d}
\left\{\begin{aligned}
\partial_t w^\varepsilon (x,t) + \hamilton^\varepsilon(x,t, \partial_x w^\varepsilon (x,t)) & = \frac{\varepsilon}{2} \partial_{xx}^2 w^\varepsilon(x,t), \qquad & x \in \RR_+^*, & \;\;  t > 0, \\
- \partial_x w^\varepsilon (0,t) & = 0, \qquad & x = 0, & \;\;  t > 0, \\
w^\varepsilon(x,0) & = w_0(x), \qquad & x \in \RR_+, & \;\;  t = 0,
\end{aligned} \right.
\end{equation}
provided with some locally bounded, Lipschitz initial condition $w_0$. Local existence and uniqueness is shown in Section~\ref{sec:W_eps_local_existence_and_uniqueness} and global existence and uniqueness in Section~\ref{ssec:bounds}, Theorem~\ref{thm:w_restriction_uniform_bounds}. The Hamiltonian $\hamilton^\varepsilon$ is defined over $(x,t,\lambda) \in \RR_+ \times \RR_+ \times \RR$ as
\begin{equation}\label{eq:def_H_w_eps}
\hamilton^\varepsilon (x,t,\lambda) := \frac{1}{2} \lambda^2 + \lambda f(x) - \frac{1}{2} (h(x))^2 - \varepsilon f'(x) + \frac{\varepsilon}{2} y(t) h''(x) + \dot y(t) h(x).
\end{equation}

Note that this Hamiltonian, its $\varepsilon \rightarrow 0$ limit and all the Hamiltonians considered in this paper satisfy the hypotheses of Theorem~\ref{thm:existence_and_uniqueness}.

\begin{remark}
With this point of view, it is possible to directly define $w^\varepsilon$ as the solution of equation~\eqref{eq:w_epsilon_1d} after proving that it is well-posed, and to introduce $\entropy^\varepsilon$ as a modification of $w^\varepsilon$. This allows to consider $\entropy^\varepsilon$ without starting from the general Zakai equation. 
\end{remark}

\begin{proposition}[Local uniform convergence of the viscous Hamiltonian]~%
\label{prop:H_eps_CV_H_1d}~%
$\hamilton^\varepsilon$ converges uniformly to $\hamilton$ in $C^0(\RR_+ \times \RR_+ \times \RR , \RR)$, where:
\begin{equation*} \label{eq:HJ-limit}
\hamilton (x,t,\lambda) := \frac{1}{2} \lambda^2 + \lambda f(x) - \frac{1}{2} (h(x))^2 + \dot y(t) h(x).
\end{equation*}
In an analogous way, $\hamilton_{\entropy}^\varepsilon$ defined in equation~\eqref{eq:def_H_S_eps} converges locally uniformly to $\hamilton_{\entropy}$ defined in equation~\eqref{eq:def_H_S_1d}.
\end{proposition}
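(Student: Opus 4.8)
The plan is to establish both convergences by directly computing the differences $\hamilton^\varepsilon - \hamilton$ and $\hamilton_{\entropy}^\varepsilon - \hamilton_{\entropy}$, exploiting the key structural fact that all the $\lambda$-dependent terms and all the $\varepsilon$-independent terms cancel, so that what remains is $\varepsilon$ multiplied by a function that is bounded uniformly on $\RR_+ \times \RR_+$ and does not depend on the momentum variable $\lambda$. This immediately yields uniform (not merely locally uniform) convergence.

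Concretely, for the first claim I would subtract the expression of $\hamilton$ given in the statement from the definition~\eqref{eq:def_H_w_eps} of $\hamilton^\varepsilon$: the terms $\tfrac{1}{2}\lambda^2$, $\lambda f(x)$, $-\tfrac{1}{2}(h(x))^2$ and $\dot y(t) h(x)$ coincide, leaving
\[
\hamilton^\varepsilon(x,t,\lambda) - \hamilton(x,t,\lambda) = \varepsilon\Big( {-}f'(x) + \tfrac{1}{2} y(t) h''(x) \Big), \qquad (x,t,\lambda)\in\RR_+\times\RR_+\times\RR .
\]
Since $f, y \in C^1_b(\RR_+;\RR)$ and $h \in C^2_b(\RR_+;\RR)$, the right-hand side is bounded in absolute value by $\varepsilon\big(\|f'\|_\infty + \tfrac{1}{2}\|y\|_\infty\|h''\|_\infty\big)$, which tends to $0$ as $\varepsilon \to 0$; as this bound is uniform in $(x,t,\lambda)$, this is precisely the asserted uniform convergence on all of $\RR_+\times\RR_+\times\RR$.

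For the analogous statement I would first expand $L_\varepsilon h(x) = \tfrac{\varepsilon}{2}h''(x) + f(x)h'(x)$ and $\partial_x g_\entropy(x,t) = f'(x) - y(t)h''(x)$ inside the definition~\eqref{eq:def_H_S_eps} of $\hamilton_{\entropy}^\varepsilon$. Subtracting the definition~\eqref{eq:def_H_S_1d} of $\hamilton_{\entropy}$, the terms $\tfrac{1}{2}\lambda^2$, $\lambda g_\entropy(x,t)$, $-\tfrac{1}{2}(h(x))^2$, $-y(t)f(x)h'(x)$ and $+\tfrac{1}{2}y(t)^2|h'(x)|^2$ all cancel, leaving exactly the same remainder
\[
\hamilton_{\entropy}^\varepsilon(x,t,\lambda) - \hamilton_{\entropy}(x,t,\lambda) = \varepsilon\Big( {-}f'(x) + \tfrac{1}{2} y(t) h''(x) \Big),
\]
so the same estimate applies and gives uniform --- hence a fortiori locally uniform --- convergence of $\hamilton_{\entropy}^\varepsilon$ to $\hamilton_{\entropy}$.

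There is essentially no serious obstacle here; the only point to be careful about is that $\lambda$ ranges over the whole unbounded real line, so one must verify that every $\lambda$-dependent term cancels exactly in the difference --- which is exactly why the convergence is genuinely uniform rather than merely local, and why no coercivity or growth control in $\lambda$ is needed. The role of the boundedness hypotheses on $f'$, $y$ and $h''$ is simply to turn the pointwise $O(\varepsilon)$ remainder into an estimate uniform in $(x,t)$.
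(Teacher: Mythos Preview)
Your proof is correct; the paper states this proposition without proof, treating the convergence as immediate from the explicit formulae. Your direct computation of the difference, together with the observation that the $\lambda$-dependent terms cancel exactly so that the remainder is $\varepsilon$ times a bounded function of $(x,t)$ alone, is precisely the intended (trivial) argument.
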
	

Formally, equation~\eqref{eq:w_epsilon_1d} tends to the following:
\begin{equation}\label{eq:w_limit}
\left\{\begin{aligned}
\partial_t w (x,t) + \hamilton(x,t,\partial_x w (x,t)) & = 0, \qquad & x \in \RR_+^*, & \;\;  t > 0, \\
- \partial_x w (0,t) & = 0, \qquad & x = 0, & \;\;  t > 0, \\
w(x,0) & = w_0(x), \qquad & x \in \RR_+, & \;\;  t = 0,
\end{aligned} \right.
\end{equation}
where the boundary condition must be understood in the sense of viscosity solutions, as in Definition~\ref{def:visc_sol_HJ}.
\begin{remark}
If $w$ is a viscosity solution of~\eqref{eq:w_limit}, the remark in Section 2 of~\cite{Barles93} still holds: using well-chosen test functions, it is possible to prove that the initial condition is satisfied in the classical sense provided $w_0$ is smooth, as is the case here. For an extension to nonsmooth initial conditions, we refer to the corresponding chapter of~\cite{Barles94}.
\end{remark}

\subsection{Local existence and uniqueness for the solution of~\eqref{eq:w_epsilon_1d}} \label{sec:W_eps_local_existence_and_uniqueness}

We wish to extend equation~\eqref{eq:w_epsilon_1d} to $x \in \RR$ in a way that guarantees that the restriction to $x \in \RR_+$ of the solution of the extended equation $\tilde w$ satisfies the Neumann boundary condition. Hence, it is sufficient to construct an extention $\tilde w$ that is even, so $\partial_x \tilde w$ is odd. Let us proceed by analogy with a reflection method presented in~\cite[Ch. 3]{Strauss08} for the heat equation with Neumann boundary condition:
\begin{equation*}\label{eq:HE_CN}
\left\{\begin{aligned}
\partial_t u(x,t) - k \partial_{xx}^2 u (x,t) & = F(x,t), \qquad & x > 0,  & \;\; t > 0, \\
\partial_x u (0,t) & = 0, \qquad & x = 0, & \;\;  t > 0, \\
u(x,0) & = u_0(x) \qquad & x \geq 0, & \;\; t = 0.
\end{aligned} \right.
\end{equation*}
Let $G$ be the Green heat kernel, defined over $(x,t) \in \RR \times \RR_+$ as:
\begin{equation*}\label{eq:1d_heat_kernel}
G_k(x,t) = \frac{1}{\sqrt{4 k \pi t}} \exp \left( - \frac{x^2}{4kt} \right).
\end{equation*}
The function
\[
\tilde u(x,t) := [G_k(\cdot, t) * u_0(|\cdot|)](x) + \int_0^t [G_k(\cdot, t-s)* F(|\cdot|, s)](x) \,{\rm d}s
\]
is the Duhamel formulation corresponding to the symetrised equation
\begin{equation*}\label{eq:HE_CN_whole_space}
\left\{\begin{aligned}
\partial_t \tilde u(x,t) - k \partial_{xx}^2 \tilde u (x,t) & = F(|x|,t), \qquad & x \in \RR, & \;\;  t > 0, \\
\tilde u(x,0) & = u_0(|x|), \qquad & x \in \RR, & \;\;  t = 0,
\end{aligned} \right.
\end{equation*}
and its restriction to $x \in \RR_+$ satisfies the initial Heat equation with Neumann boundary condition.

In an analogous way, we define the symmetrised Hamiltonian $\tilde \hamilton$, taking into account that the variable $\lambda$ will be expected to be an odd function of $x$: 
\begin{equation}\label{eq:def_tilde_H_w_eps}
\tilde \hamilton^\varepsilon: \left\{
\begin{array}{ll}
\RR \times \RR_+ \times \RR & \to \RR \\
(x,t,\lambda) & \mapsto \hamilton (|x|, t, {\rm sgn}(x) \lambda) = \frac{1}{2} \lambda^2 + \lambda g(x) - \tilde V_w^\varepsilon(x,t),
\end{array}
\right.
\end{equation}
with 
\begin{equation}\label{eq:def_Vtildeeps}
\left\{ \begin{aligned}
g (x) & :=  {\rm sgn}(x) f(|x|), \\
\tilde V_w^{\varepsilon}(x,t) & := \frac{1}{2} (h(|x|))^2 - \dot y(t) h(|x|) + \varepsilon f ' (|x|) - \frac{\varepsilon}{2} y(t) h''(|x|).
\end{aligned}\right.
\end{equation}
Note that $g$ for $\tilde w^\varepsilon$ corresponds to $g_\entropy$ for $\entropy$, defined in~\eqref{eq:JB88DefgLV}. The `symmetrised' version of equation~\eqref{eq:w_epsilon_1d} reads
\begin{equation}\label{eq:tilde_w_epsilon}
\left\{\begin{aligned}
\partial_t \tilde w^\varepsilon (x,t) + \tilde \hamilton^\varepsilon(x,t, \partial_x \tilde w^\varepsilon (x,t)) & = \frac{\varepsilon}{2} \partial_{xx}^2 \tilde w^\varepsilon(x,t), \qquad & x \in \RR, & \;\;  t > 0, \\
\tilde w^\varepsilon(x,0) & = \tilde w_0(x), \qquad & x \in \RR, & \;\;  t = 0,
\end{aligned} \right.
\end{equation}
where we use $\tilde w_0 : x \in \RR \mapsto w_0(|x|)$. Note that there is no more Neumann boundary condition.

Let us establish the well-posedness of the equation above.

\begin{theorem}[Local existence and uniqueness of a solution of~\eqref{eq:tilde_w_epsilon}] \label{thm:uniqueness_eps}
Let $\varepsilon > 0$. Let $\tilde w_0 \in L^\infty \cap {\rm Lip}$, and $\tilde \hamilton^\varepsilon \in {\rm Lip}_{\rm loc} \text{w.r.t.} \; \lambda$. Then there exists $T>0$ such that there exists a unique smooth solution $\tilde w^\varepsilon$ of equation~\eqref{eq:tilde_w_epsilon} defined on $\RR \times [0,T]$.
\end{theorem}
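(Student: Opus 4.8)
The plan is to solve~\eqref{eq:tilde_w_epsilon} by a contraction-mapping argument on its Duhamel (mild) formulation, then to upgrade the resulting mild solution to a classical one by parabolic regularity, and finally to promote uniqueness in the fixed-point ball to uniqueness in the full class.

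\textbf{Step 1: mild formulation and the working space.} Let $G_{\varepsilon/2}$ be the heat kernel with diffusivity $\varepsilon/2$, as used in Section~\ref{sec:W_eps_local_existence_and_uniqueness}. Writing~\eqref{eq:tilde_w_epsilon} in Duhamel form, a solution should be a fixed point of the map
\[
\Phi(u)(\cdot,t) := G_{\varepsilon/2}(\cdot,t) * \tilde w_0 \;-\; \int_0^t G_{\varepsilon/2}(\cdot,t-s) * \tilde\hamilton^\varepsilon\big(\cdot,s,\partial_x u(\cdot,s)\big)\,{\rm d}s .
\]
I would look for this fixed point in
\[
X_T := \Big\{ u : \; u, \partial_x u \in C([0,T]; L^\infty(\RR)),\; \sup_{[0,T]}\|u\|_\infty \le M,\; \sup_{[0,T]}\|\partial_x u\|_\infty \le L \Big\},
\]
with $M := 2\|\tilde w_0\|_\infty$, $L := 2\,{\rm Lip}(\tilde w_0)$, and norm $\|u\|_{X_T} := \sup_{[0,T]}(\|u\|_\infty + \|\partial_x u\|_\infty)$. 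The two ingredients are the heat-semigroup bounds $\|G_{\varepsilon/2}(\cdot,t)\|_{L^1}=1$, $\|\partial_x G_{\varepsilon/2}(\cdot,t)\|_{L^1} \le C_\varepsilon t^{-1/2}$, together with the identity $\partial_x\big(G_{\varepsilon/2}(\cdot,t)*\tilde w_0\big) = G_{\varepsilon/2}(\cdot,t)*\partial_x \tilde w_0$ valid since $\tilde w_0$ is Lipschitz; and the fact that, thanks to $f(0)=0$ and formulas~\eqref{eq:def_tilde_H_w_eps}--\eqref{eq:def_Vtildeeps}, $(x,t,\lambda)\mapsto\tilde\hamilton^\varepsilon(x,t,\lambda)$ is Lipschitz on $\RR\times[0,T]\times[-L,L]$, with Lipschitz constant $K=K(\varepsilon,L)$ in the $\lambda$-slot and a bound $N=N(\varepsilon,L)$ on $|\tilde\hamilton^\varepsilon|$ there.

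\textbf{Step 2: self-mapping and contraction.} For $u\in X_T$ one gets $\|\Phi(u)\|_\infty \le \|\tilde w_0\|_\infty + TN$ and $\|\partial_x\Phi(u)\|_\infty \le {\rm Lip}(\tilde w_0) + 2C_\varepsilon\sqrt{T}\,N$, so $\Phi(X_T)\subseteq X_T$ as soon as $T$ is small enough that $TN\le \|\tilde w_0\|_\infty$ and $2C_\varepsilon\sqrt{T}\,N\le {\rm Lip}(\tilde w_0)$. For $u_1,u_2\in X_T$, pointwise $|\tilde\hamilton^\varepsilon(x,s,\partial_x u_1)-\tilde\hamilton^\varepsilon(x,s,\partial_x u_2)|\le K|\partial_x u_1-\partial_x u_2|$, whence
\[
\|\Phi(u_1)-\Phi(u_2)\|_{X_T} \le \big(T + 2C_\varepsilon\sqrt{T}\big)\,K\,\|u_1-u_2\|_{X_T},
\]
which is a strict contraction after possibly shrinking $T$. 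Banach's fixed point theorem then gives a unique $\tilde w^\varepsilon\in X_T$ solving the mild equation.

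\textbf{Step 3: regularity and uniqueness.} Since $\partial_x\tilde w^\varepsilon$ is bounded, the source $\tilde\hamilton^\varepsilon(\cdot,\cdot,\partial_x\tilde w^\varepsilon)$ lies in $L^\infty$; the smoothing of the heat semigroup then makes $\tilde w^\varepsilon$ Hölder in space and time, so $\partial_x\tilde w^\varepsilon$ is locally Hölder, and on $\RR_+^*$ and $\RR_-^*$ (where $g$ and $\tilde V_w^\varepsilon$ are smooth) a classical Schauder bootstrap upgrades $\tilde w^\varepsilon$ to $C^{2+\alpha,1+\alpha/2}_{\rm loc}$ and then, iterating and using the smoothness of $\tilde\hamilton^\varepsilon$ away from $x=0$, to $C^\infty_{\rm loc}$ on each open half-line; continuity of $\tilde w^\varepsilon$ and of the odd function $\partial_x\tilde w^\varepsilon$ (hence vanishing) at $x=0$, together with the parity of $\tilde w_0$, ensures that the even extension solves~\eqref{eq:tilde_w_epsilon} classically across $x=0$, which is the sense in which $\tilde w^\varepsilon$ is ``smooth''. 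Uniqueness in $X_T$ is built into the contraction; uniqueness among all solutions sharing the data (a priori bounded and Lipschitz) follows by subtracting two such solutions, writing the resulting identity in Duhamel form and closing a singular Gronwall estimate $\phi(t)\le C\int_0^t(t-s)^{-1/2}\phi(s)\,{\rm d}s$ with $\phi(t):=\|\tilde w_1^\varepsilon(\cdot,t)-\tilde w_2^\varepsilon(\cdot,t)\|_{W^{1,\infty}(\RR)}$, which forces $\phi\equiv 0$ on $[0,T]$.

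\textbf{Main obstacle.} The delicate point is that the nonlinearity sees $\partial_x\tilde w^\varepsilon$, so one cannot close the argument in $L^\infty$ alone: the $t^{-1/2}$-smoothing estimate for $\partial_x G_{\varepsilon/2}$ must be combined with a norm tracking the Lipschitz constant, and the admissible horizon $T$ is dictated by the Lipschitz constant $K(\varepsilon,L)$ of $\tilde\hamilton^\varepsilon$ on the ball of radius $L$ — which is precisely why the statement is only local in time. A secondary technical nuisance is the merely-Lipschitz regularity of the symmetrised coefficients at $x=0$, so the ``smooth'' conclusion there has to be read off the parity of the solution rather than from interior regularity.
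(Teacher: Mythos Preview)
Your proposal is correct and follows essentially the same route as the paper: a Picard/contraction argument on the Duhamel formulation in the norm $\|u\|_{L^\infty}+\|\partial_x u\|_{L^\infty}$, handling the merely local Lipschitz dependence on $\lambda$ by restricting to a ball (what the paper calls the ``security cylinder'' method), and then upgrading to smoothness via the Green kernel. Your write-up is in fact more detailed than the paper's sketch, and your explicit discussion of the parity issue at $x=0$ is a welcome clarification; the only cosmetic point is that your self-mapping thresholds $TN\le\|\tilde w_0\|_\infty$ and $2C_\varepsilon\sqrt{T}\,N\le{\rm Lip}(\tilde w_0)$ degenerate when $\tilde w_0\equiv 0$, which is trivially fixed by taking $M,L$ strictly larger than $2\|\tilde w_0\|_\infty,\,2\,{\rm Lip}(\tilde w_0)$.
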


The proof of Theorem~\ref{thm:uniqueness_eps} is a technical, but relatively standard fixed-point method, so for the sake of conciseness, we will only sketch it.
\begin{proof}
We fix $\varepsilon$, and assume, in a first step, that $\tilde \hamilton^\varepsilon$ is globally Lipschitz in $\lambda$. We prove that for $(x,t) \in \RR \times [0,T]$ with $T$ small enough, the mapping of a Picard iterate to the next is a contraction in the norm $\|u\| := \|u\|_{L^\infty} + \|\partial_x u\|_{L^\infty}$. We then extend the result to $\tilde \hamilton^\varepsilon$ locally Lipschitz in $\lambda$ by applying a security cylinder method used in~\cite[Ch.V]{Demailly16}. Smoothness follows from that of the Green kernel.
\end{proof}


\subsection{Uniform in $\varepsilon$ bounds on $w^\varepsilon$} 
\label{ssec:bounds}

The main result of this section is a global existence, uniqueness and uniform-in-$\varepsilon$ boundedness theorem on $w^\varepsilon$:
\begin{theorem} \label{thm:w_restriction_uniform_bounds}
Equation~\eqref{eq:w_epsilon_1d} admits a unique solution $w^\varepsilon$ defined globally in time, and $w^\varepsilon$ is locally bounded in $W_x^{1,\infty} (\RR_+; C_t^{1,1})$: over any compact set $Q$ there exists $K > 0$ such that for all $(x,t), (x,s) \in Q$,
  	\begin{equation} \label{eq:w_restriction_1_infty_bounds}
  	\begin{aligned}
	& (i) \qquad & \left|  w^\varepsilon (x,t) \right| & \leq K, \medskip\\
  	& (ii) \qquad & \left| \partial_x  w^\varepsilon (x,t) \right| & \leq K, \medskip\\
  	& (iii) \qquad & \left|  w^\varepsilon (x,t) - w^\varepsilon (x,s) \right| & \leq K \left(\left| t-s \right|^{1/2} + |t-s| \right).
   	\end{aligned} 
  	\end{equation} 
  Moreover, $(i)$ can be refined into a sharper estimate $(iv)$, where the bound itself does not depend on $R$. Namely, for all $R \geq \max (8, 16 \|f\|_{L^\infty (\RR)})$ there exists $\varepsilon_R := \frac{1}{32 R^4}$ such that for all $0 < \varepsilon < \varepsilon_R$, 
  \begin{equation}
(iv) \qquad \| w^\varepsilon \|_{L^\infty (Q_R)} \leq \| w_0\|_{L^\infty (\RR_+)} + \left[ 8(1+\|f^\varepsilon\|_{L^\infty (\RR_+)}^2) +  \|V^\varepsilon\|_{L^\infty(\RR_+ \times [0,T])} + 1 \right] T + 1.
\end{equation} 
\end{theorem}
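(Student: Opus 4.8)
I would run the entire argument on the symmetrised problem~\eqref{eq:tilde_w_epsilon} on $\RR\times[0,T]$: the even extension $\tilde w_0$ of $w_0$ generates, by Theorem~\ref{thm:uniqueness_eps}, a unique smooth $\tilde w^\varepsilon$ on a maximal interval $[0,T_\varepsilon^{\max})$ whose restriction to $\RR_+$ solves~\eqref{eq:w_epsilon_1d} (the Neumann condition being automatic from the oddness of $\partial_x\tilde w^\varepsilon$). Everything then reduces to a priori bounds on $\tilde w^\varepsilon$ that (a) prevent blow-up, forcing $T_\varepsilon^{\max}=+\infty$, hence global existence, and (b) are exactly the estimates $(i)$--$(iv)$ after restriction. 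Uniqueness follows on every finite interval from the uniqueness part of Theorem~\ref{thm:uniqueness_eps}, equivalently, once boundedness is known, from the comparison principle of Theorem~\ref{thm:existence_and_uniqueness}.

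\textbf{The $L^\infty$ bounds $(i)$ and $(iv)$.} Completing the square, $\tilde \hamilton^\varepsilon(x,t,\lambda)=\tfrac12(\lambda+g(x))^2-\tfrac12 g(x)^2-\tilde V_w^\varepsilon(x,t)$ with $\|g\|_{L^\infty}=\|f\|_{L^\infty}$, one gets the two-sided control of $\tilde\hamilton^\varepsilon(x,t,0)=-\tilde V^\varepsilon_w(x,t)$ needed for a maximum principle: the affine-in-time functions $t\mapsto\pm(\|w_0\|_{L^\infty(\RR_+)}+Ct)$, with $C$ dominating $\tfrac12\|f\|_{L^\infty}^2+\|\tilde V_w^\varepsilon\|_{L^\infty(\RR\times[0,T])}$, are respectively a super- and a subsolution of~\eqref{eq:tilde_w_epsilon} (their $x$-derivative vanishes, so there is no boundary obstruction), and comparison yields $(i)$ with $K$ depending only on $\|w_0\|_{L^\infty}$, $T$ and the norms of $f,h,y$ (and, through the $O(\varepsilon)$ part of $\tilde V^\varepsilon_w$, on $\varepsilon\le\varepsilon_0$ only). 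For the sharp form $(iv)$ — whose point is a constant genuinely independent of the truncation radius $R$ — the subtlety is that the comparison tool is stated for \emph{bounded} sub/supersolutions, so to apply it legitimately I would first localise to $Q_R=(\bar B_R\cap\RR_+)\times[0,T]$ and truncate the quadratic part of $\hamilton^\varepsilon$ above the eventual gradient scale. Running the barrier argument against a cut-off localising near $\partial B_R$, whose first and second derivatives scale like $R^{-1}$ and $R^{-2}$, produces corrections of order $R^{-2}$ from the quadratic Hamiltonian and of order $\varepsilon R^{-2}$ from the viscous term; imposing $\varepsilon<\varepsilon_R:=\tfrac1{32R^4}$ is calibrated to dominate all of them against the $O(1)$ budget, so the barrier survives with the displayed $R$-free constant, and letting $R\to\infty$ propagates the uniform bound to $\RR_+\times[0,T]$.

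\textbf{The gradient bound $(ii)$ — the main obstacle.} The naive Bernstein estimate (differentiate~\eqref{eq:tilde_w_epsilon} in $x$, apply the maximum principle to $\partial_x\tilde w^\varepsilon$) is blocked, since $\partial_x\tilde\hamilton^\varepsilon$ would call for $f''$ and $h'''$, beyond the assumed $C^1_b$/$C^2_b$ regularity. What survives is the structure $\tilde \hamilton^\varepsilon(x,t,\lambda)=\tfrac12\lambda^2+g(x)\lambda-\tilde V^0(x,t)-\varepsilon\tilde r^\varepsilon(x,t)$, where $\tilde V^0=\tfrac12 h(|\cdot|)^2-\dot y\,h(|\cdot|)$ is Lipschitz in $x$ with an $\varepsilon$-independent constant, $g$ is Lipschitz with constant $\|f'\|_{L^\infty}$, and $\varepsilon\tilde r^\varepsilon$ collects exactly the ``quantum correction'' terms $\varepsilon f'(|\cdot|)-\tfrac\varepsilon2 y\,h''(|\cdot|)$, which are bounded but \emph{not} known to be Lipschitz. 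The plan is therefore a doubling-of-variables/comparison argument applied to $x\mapsto\tilde w^\varepsilon(x+\delta,t)-\tilde w^\varepsilon(x,t)$: the ``classical'' part ($\tfrac12\lambda^2$, $g\lambda$, $\tilde V^0$) generates a linear ODE for the Lipschitz constant driven by $\|f'\|_{L^\infty}$ and $\mathrm{Lip}_x(\tilde V^0)$, producing the $\varepsilon$-uniform factor $e^{\|f'\|_{L^\infty}t}$; whereas the contribution of $\varepsilon\tilde r^\varepsilon$ is controlled \emph{not} through a Lipschitz modulus but via the heat-kernel smoothing, by bounding $\int_0^t\|\partial_x G_{\varepsilon/2}(\cdot,t-s)\|_{L^1}\,\varepsilon\|\tilde r^\varepsilon\|_{L^\infty}\,\dd s=O(\sqrt{\varepsilon t})$, i.e.\ an $O(\sqrt\varepsilon)$, hence uniformly bounded and vanishing, perturbation of the gradient. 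Making this split compatible with the doubling argument at the level of the fully nonlinear equation — absorbing the $O(\varepsilon)$ corrections into the comparison/barrier functions and checking their effect stays $O(\sqrt\varepsilon)$ rather than degrading the Lipschitz bound — is the technical heart of the theorem; this is precisely the ``sharper estimate'' that~\cite{JB88} circumvented by having $w$ as a value function, and it is where I expect the real work to be.

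\textbf{Time regularity $(iii)$ and conclusion.} Once $(ii)$ holds, $(iii)$ is a Duhamel/barrier computation: writing~\eqref{eq:tilde_w_epsilon} as $\partial_t\tilde w^\varepsilon=\tfrac\varepsilon2\partial^2_{xx}\tilde w^\varepsilon-\tilde \hamilton^\varepsilon(\cdot,\cdot,\partial_x\tilde w^\varepsilon)$ with a bounded source, the identity $\tilde w^\varepsilon(x,t)-\tilde w^\varepsilon(x,s)=\bigl([G_{\varepsilon/2}(\cdot,t-s)-\delta_0]*\tilde w^\varepsilon(\cdot,s)\bigr)(x)-\int_s^t\bigl(G_{\varepsilon/2}(\cdot,t-\sigma)*\tilde \hamilton^\varepsilon(\cdot,\sigma,\partial_x\tilde w^\varepsilon(\cdot,\sigma))\bigr)(x)\,\dd\sigma$ splits the increment into an initial-layer term controlled by $\mathrm{Lip}_x(\tilde w^\varepsilon)\,\int_{\RR}|x|\,G_{\varepsilon/2}(x,t-s)\,\dd x=O(\sqrt{t-s})$ (using $(ii)$ and $\varepsilon\le\varepsilon_0$) and a bounded-source term $\le\|\tilde \hamilton^\varepsilon\|_{L^\infty}\,|t-s|$, which is the modulus $K(|t-s|^{1/2}+|t-s|)$ of $(iii)$. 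Finally, the uniform control of $\|\tilde w^\varepsilon(\cdot,t)\|_{L^\infty}+\|\partial_x\tilde w^\varepsilon(\cdot,t)\|_{L^\infty}$ on bounded time intervals, given by $(i)$ and $(ii)$, rules out finite-time blow-up of the local solution of Theorem~\ref{thm:uniqueness_eps}: restarting the fixed point from a time close to $T_\varepsilon^{\max}$ extends the solution beyond it, so $T_\varepsilon^{\max}=+\infty$; restriction to $\RR_+$ then gives the global $w^\varepsilon$ with all the stated properties, and uniqueness as noted above.
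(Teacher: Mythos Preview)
Your reduction to the symmetrised problem $\tilde w^\varepsilon$ and the bootstrap scheme (local existence $\Rightarrow$ a priori bounds $\Rightarrow$ global existence $\Rightarrow$ restrict to $\RR_+$) is exactly the paper's architecture. But the actual estimates are obtained by quite different techniques, and you have misidentified where the work lies.

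\textbf{Where the novelty is.} The paper's ``sharper estimate'' is the $L^\infty$ bound $(iv)$, not the gradient bound. Their barrier is the explicit $v(x,t)=\frac{1}{R^2-|x|^2}+\mu t+M$ (after \cite{JB88,EI85}), which blows up on $\partial B_R$ and hence forces $\tilde w^\varepsilon\le v$ on $\Gamma_R$ for free; the new computation is a pointwise lower bound $-\mathscr{E}_R(x)+\mathscr{G}_R(x)\ge -8\max(1,\|g\|_{L^\infty}^2)$ that makes $\mu$ (hence the final constant) independent of $R$. Your constant-in-space barriers $\pm(\|w_0\|_{L^\infty}+Ct)$ do give a global $L^\infty$ bound \emph{if} one can apply comparison on all of $\RR$, but you then have to re-localise with a cutoff to get $(iv)$, and the sketch you give for that step (derivative scales $R^{-1}$, $R^{-2}$, etc.) does not obviously recover the precise $R$-free constant in the statement; the paper's barrier does this in one pass.

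\textbf{On the gradient bound.} The paper simply runs the Bernstein-type argument of \cite{JB88,EI85}: set $z=\zeta^2|\partial_x\tilde w^\varepsilon|^2-\lambda\tilde w^\varepsilon$, apply the maximum principle, and juggle the resulting inequality. There is no doubling of variables, no heat-kernel splitting, and the paper treats $(ii)$ as essentially routine. Your objection that Bernstein calls for $f''$ and $h'''$ (through $\partial_x\tilde V_w^\varepsilon$) is formally correct under the stated $C^1_b/C^2_b$ hypotheses, but the paper does not flag it and proceeds as in \cite{JB88}; the offending terms carry an $\varepsilon$ prefactor and are absorbed into the generic constant $C$ in their sketch. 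Conversely, your proposed route --- comparison for $x\mapsto\tilde w^\varepsilon(x+\delta,t)-\tilde w^\varepsilon(x,t)$ plus a Duhamel correction for $\varepsilon\tilde r^\varepsilon$ --- is delicate for a genuinely second-order nonlinear equation: the shifted function does not solve a clean equation to which a comparison principle applies, and patching the Duhamel estimate for the linear heat part onto a viscosity-type doubling argument for the quadratic Hamiltonian is exactly the kind of hybrid that tends not to close. This is a real gap in your plan, whereas Bernstein (modulo the regularity caveat you raise) goes through.

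\textbf{Time regularity.} Your Duhamel argument for $(iii)$ is fine; the paper instead quotes \cite[Lemma~5.2]{CL84} directly.
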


For the sake of simplicity, we will first prove uniform in $\varepsilon$ estimates on $\tilde w^\varepsilon$, 
the even extension to $\RR \times \RR_+$ of $w^\varepsilon$, which we defined in equation~\eqref{eq:tilde_w_epsilon}. 
The bounds on $w^\varepsilon$ and its global existence are a direct corollary of the corresponding theorems on $\tilde w^\varepsilon$.

Our proof strategy in this section is that of James and Baras~\cite{JB88}, with the exceptions that we apply it to $\tilde w^\varepsilon$ rather than the extension of $\entropy^\varepsilon$, that we only have local existence of the solution at fixed $\varepsilon$ for now, and that we need to glean a sharper $L^\infty_{\rm{loc}}$ estimate. The global existence of the solution for each $\varepsilon$ is a consequence of the uniform bounds (Corollary~\ref{cor:tilde_w_global_existence}), and the exact same proof can then be applied over $[0,T]$.

\begin{theorem}\label{thm:W_uniform_bounds}
Assume $\tilde \hamilton$ satisfies the assumptions of Theorem \ref{thm:existence_and_uniqueness}, and let $T>0$ such that equation~\eqref{eq:tilde_w_epsilon} admits a unique smooth solution $\tilde w^\varepsilon$ over $\RR \times [0,T]$.
  	Then for every compact subset $Q \subset \RR \times [0,T]$ 
  	there exists $\varepsilon_0 >0$ and $K >0$ such that for all $0 < \varepsilon < \min (\varepsilon_0, 1)$, for all $(x,t), (x,s) \in Q$, $\tilde w^\varepsilon$ satisfies:
  	\begin{equation} \label{eq:W_1_infty_bounds}
  	\begin{aligned}
	& (i) \qquad & \left| \tilde w^\varepsilon (x,t) \right| & \leq K, \medskip\\
  	& (ii) \qquad & \left| \partial_x \tilde w^\varepsilon (x,t) \right| & \leq K, \medskip\\
  	& (iii) \qquad & \left| \tilde w^\varepsilon (x,t) - \tilde w^\varepsilon (x,s) \right| & \leq K \left(\left| t-s \right|^{1/2} + |t-s| \right).
   	\end{aligned} 
  	\end{equation} 
  Moreover, $(i)$ can be refined into a sharper estimate $(iv)$, where the bound itself does not depend on $R$. Namely, for all $R \geq 8$ there exists $\varepsilon_R := \frac{1}{32 R^4}$ such that for all $0 < \varepsilon < \varepsilon_R$, 
    \begin{equation} \label{eq:sharp_Linfty}
(iv) \qquad \| \tilde w^\varepsilon \|_{L^\infty (Q_{R})} \leq \|\tilde w_0\|_{L^\infty (\RR)} + \left[8(1+\|g\|_{L^\infty (\RR)}) + \|V^\varepsilon\|_{L^\infty(\RR \times [0,T])} + 1 \right] T + 1.
\end{equation}
\end{theorem}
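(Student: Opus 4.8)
The plan is to transpose the scheme of James and Baras~\cite{JB88} to the symmetrised, boundary-free equation~\eqref{eq:tilde_w_epsilon}, all estimates being derived on the interval $[0,T]$ on which Theorem~\ref{thm:uniqueness_eps} already provides a bounded, Lipschitz, smooth solution $\tilde w^\varepsilon$; global existence for each $\varepsilon$ (Corollary~\ref{cor:tilde_w_global_existence}) then follows from the bounds and the argument reruns over any prescribed $[0,T]$. The four estimates are obtained in the order $(i)$--$(iv)$, each feeding the next. For $(i)$, I would compare $\tilde w^\varepsilon$ with the affine-in-time functions $\phi^{\pm}(x,t) := \pm \big( \|\tilde w_0\|_{L^\infty(\RR)} + C t \big)$: since $\tilde\hamilton^\varepsilon(x,t,0) = -\tilde V_w^\varepsilon(x,t)$ by~\eqref{eq:def_tilde_H_w_eps}--\eqref{eq:def_Vtildeeps} and the quadratic term $\tfrac{1}{2}\lambda^2 \ge 0$ only helps on the supersolution side, $\phi^{+}$ is a supersolution and $\phi^{-}$ a subsolution of~\eqref{eq:tilde_w_epsilon} as soon as $C \ge \|\tilde V_w^\varepsilon\|_{L^\infty(\RR \times [0,T])}$; as $\tilde w^\varepsilon$ is bounded, the parabolic maximum principle gives $-\phi^{+} \le \tilde w^\varepsilon \le \phi^{+}$. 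Since $f$, $h$, $y$ and the derivatives entering~\eqref{eq:def_Vtildeeps} are bounded, $\|\tilde V_w^\varepsilon\|_{L^\infty}$ is bounded uniformly for $\varepsilon < 1$, so $(i)$ holds with $K$ uniform in $\varepsilon$.

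For $(ii)$ I would run a Bernstein-type argument: differentiating~\eqref{eq:tilde_w_epsilon} in $x$ yields a quasilinear parabolic equation for $z := \partial_x \tilde w^\varepsilon$; writing a differential inequality for $z^2 \chi$, with $\chi$ a spatial cutoff equal to $1$ on $Q$ and supported slightly beyond it, and evaluating at an interior maximum, the contributions of the cutoff are absorbed using the $L^\infty$ bound $(i)$, while the only $\varepsilon$-dependent coefficients carry an explicit factor $\varepsilon$ and are harmless for $\varepsilon < 1$. (Alternatively one compares $\tilde w^\varepsilon(\cdot + \eta, \cdot)$ with $\tilde w^\varepsilon$, which uses only the Lipschitz continuity of $g$ and $\tilde V_w^\varepsilon$.) This gives $(ii)$ with $K$ local in $Q$ and uniform in $\varepsilon < \varepsilon_0$. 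For $(iii)$ I would use the Duhamel representation already exploited in the proof of Theorem~\ref{thm:uniqueness_eps}, writing $\tilde w^\varepsilon$ as the heat semigroup of parameter $\varepsilon$ applied to $\tilde w_0$ plus the convolution of the corresponding heat kernel with $F^\varepsilon := -\tilde\hamilton^\varepsilon(\cdot, \cdot, \partial_x \tilde w^\varepsilon)$, which is locally bounded uniformly in $\varepsilon$ by $(i)$--$(ii)$. The source term contributes the $|t-s|$ part of the modulus of continuity, and the $|t-s|^{1/2}$ part comes from the elementary bound $\| e^{\tau \partial_{xx}^2} \varphi - \varphi \|_{L^\infty} \le C \sqrt{\tau}\, \| \partial_x \varphi \|_{L^\infty}$ applied to the Lipschitz pieces (namely $\tilde w_0$ and the Duhamel integral up to time $s$), with $\tau \sim \varepsilon |t-s| \le |t-s|$.

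The delicate point is $(iv)$, the $L^\infty$ estimate on $Q_R$ with a constant \emph{independent of $R$}, which has no counterpart in~\cite{JB88}. Here I would avoid the comparison principle on all of $\RR$ and instead use a localised maximum principle on $Q_R$ against a barrier that blows up on $\{|x| = R\}$, of the form $\Psi(x,t) := \|\tilde w_0\|_{L^\infty(\RR)} + C t + \theta\, (R^2 - x^2)^{-2}$ (or a comparable double-order blow-up), with $\theta$ of order one so that $\Psi(0,t) - \|\tilde w_0\|_{L^\infty} - C t = \theta R^{-4} \le 1$, and $C$ a suitable constant built from $\|g\|_{L^\infty}$ and $\|\tilde V_w^\varepsilon\|_{L^\infty}$. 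Such a $\Psi$ dominates $\tilde w^\varepsilon$ on the parabolic boundary of $Q_R$ --- trivially on $\{|x| = R\}$, where $\Psi = +\infty$, and on $\{t = 0\}$ where $\Psi(\cdot,0) \ge \|\tilde w_0\|_{L^\infty} \ge \tilde w_0$ --- so it suffices to check that $\Psi$ is a supersolution of~\eqref{eq:tilde_w_epsilon} on $Q_R$, whence $\tilde w^\varepsilon \le \Psi$ there and evaluating at $x = 0$ (and symmetrically from below) gives $(iv)$. Verifying the supersolution inequality $\partial_t \Psi + \tilde\hamilton^\varepsilon(x,t,\partial_x \Psi) - \tfrac{\varepsilon}{2} \partial_{xx}^2 \Psi \ge 0$ is exactly where the threshold $\varepsilon < \varepsilon_R = \tfrac{1}{32 R^4}$ appears: near $|x| = R$ the favourable gradient term $\tfrac{1}{4}(\partial_x \Psi)^2$, of order $\theta^2 x^2 (R^2 - x^2)^{-6}$, must dominate the unfavourable viscous term $\tfrac{\varepsilon}{2} \partial_{xx}^2 \Psi$, of order $\varepsilon \theta (R^2 - x^2)^{-4}(R^2 + x^2)$, which forces $\varepsilon$ below a fixed multiple of $R^{-4}$; on the complementary region the viscous and drift contributions of $\Psi$ are small and absorbed by $C$, the remaining terms being handled through $\tfrac{1}{2}\lambda^2 + \lambda g \ge -\tfrac{1}{2}\|g\|_{L^\infty}^2$ and $\tilde\hamilton^\varepsilon(x,t,0) = -\tilde V_w^\varepsilon$.

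I expect this last barrier bookkeeping to be the main obstacle: one must make the competing quadratic and viscous terms balance with clean, $R$-independent constants, keep everything uniform in $\varepsilon$, and stay compatible with the fact that at this stage $\tilde w^\varepsilon$ is known to exist only on a fixed interval. Once $(i)$--$(iv)$ are in hand, the uniform bound at the end of that interval prevents blow-up, yielding global existence for each $\varepsilon$ (Corollary~\ref{cor:tilde_w_global_existence}) and allowing the same proof to be carried out over any $[0,T]$; restricting $\tilde w^\varepsilon$ to $\RR_+$ and undoing the symmetrisation then gives Theorem~\ref{thm:w_restriction_uniform_bounds}.
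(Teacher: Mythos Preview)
Your proposal is correct but organised differently from the paper, and in one place substantially simpler. The paper proves $(iv)$ \emph{first}, using an exploding barrier $v(x,t)=(R^2-|x|^2)^{-1}+\mu t+M$ on $Q_R$ together with the bounded-domain comparison Lemma~\ref{lem:comparison_theorem}, and then records $(i)$ as the corollary $(iv)\Rightarrow(i)$. Your $(i)$ instead compares with the constant-in-$x$ barriers $\phi^\pm$ on all of $\RR$, which needs a whole-space maximum principle (standard for bounded solutions with bounded drift, but not the lemma the paper states). If that comparison is granted, your $(i)$ already yields the global bound $\|\tilde w^\varepsilon\|_{L^\infty(\RR\times[0,T])}\le\|\tilde w_0\|_{L^\infty}+\|\tilde V_w^\varepsilon\|_{L^\infty}\,T$, which is sharper than~\eqref{eq:sharp_Linfty} and makes your separate barrier argument for $(iv)$ redundant. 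The paper's heavier route is inherited from~\cite{JB88}, where a~priori boundedness is not available and the exploding barrier is genuinely needed; here, with Theorem~\ref{thm:uniqueness_eps} already supplying a bounded Lipschitz solution for each $\varepsilon$, your shortcut is legitimate.

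For the remaining estimates the differences are minor. For $(ii)$ the paper also runs a Bernstein argument, but with the specific auxiliary function $z=\zeta^2|\partial_x\tilde w^\varepsilon|^2-\lambda\tilde w^\varepsilon$; the $-\lambda\tilde w^\varepsilon$ correction (with $\lambda$ chosen proportional to $\max\zeta|\partial_x\tilde w^\varepsilon|+1$) is what lets them close the inequality at the interior maximum, and your cutoff-only sketch should be tightened to include it or an equivalent device. For $(iii)$ the paper simply invokes Lemma~5.2 of~\cite{CL84} rather than your Duhamel computation; both are valid. Your barrier for $(iv)$, $\theta(R^2-x^2)^{-2}$, differs from the paper's $(R^2-|x|^2)^{-1}$ only in the order of blow-up: either choice makes $\tfrac12|\partial_x v|^2$ dominate $\tfrac\varepsilon2\partial_{xx}^2 v$ near $|x|=R$ once $\varepsilon$ is below a fixed negative power of $R$, while in the interior the drift term $g\,\partial_x v$ is handled in the paper by a separate claim bounding $-\mathscr E_R+\mathscr G_R$ from below by $-8\max(1,\|g\|^2_{L^\infty})$, which is the origin of the $8(1+\|g\|_{L^\infty})$ contribution in~\eqref{eq:sharp_Linfty}.
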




\begin{corollary} \label{cor:tilde_w_global_existence}
Equation~\eqref{eq:tilde_w_epsilon} admits a unique solution $\tilde w^\varepsilon$ defined globally in time. Moreover, $\tilde w^\varepsilon$ is locally bounded in the norm of Theorem~\ref{thm:W_uniform_bounds}. 
\end{corollary}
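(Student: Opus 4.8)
\emph{Proof strategy.} The plan is to upgrade the local existence of Theorem~\ref{thm:uniqueness_eps} to a global statement by feeding in the a priori bounds of Theorem~\ref{thm:W_uniform_bounds} through a continuation (maximal-time) argument; the only subtlety is that Theorem~\ref{thm:W_uniform_bounds} presupposes a solution on $[0,T]$, so the two results have to be bootstrapped against each other.

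Fix $\varepsilon>0$. First I would record that the fixed-point scheme behind Theorem~\ref{thm:uniqueness_eps} produces, for any datum $v_0 \in L^\infty \cap {\rm Lip}(\RR)$, a local existence time $\tau = \tau(\varepsilon, \|v_0\|)$ that depends on $v_0$ only through $\|v_0\| := \|v_0\|_{L^\infty(\RR)} + \|\partial_x v_0\|_{L^\infty(\RR)}$, and that $\tau$ may be taken nonincreasing in $\|v_0\|$ (the contraction radius and contraction time are controlled by $\|v_0\|$, by $\varepsilon$, and by the local Lipschitz constant of $\tilde\hamilton^\varepsilon$ in $\lambda$ over the relevant range). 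By local uniqueness, solutions on different intervals $[0,T]$ are consistent, so there is a well-defined maximal solution; let $T^\ast\in(0,+\infty]$ be its existence horizon, which is positive by the above.

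Suppose $T^\ast<\infty$. For each $T<T^\ast$ the solution exists on $\RR\times[0,T]$, so Theorem~\ref{thm:W_uniform_bounds} applies and, using the genuinely global-in-$x$ forms of the $L^\infty$ bound $(i)$ (sharpened to $(iv)$) and of the gradient bound $(ii)$ supplied by its proof, yields $\sup_{0\le t\le T}\|\tilde w^\varepsilon(\cdot,t)\|\le C(\varepsilon,T)$ with $C(\varepsilon,\cdot)$ bounded on $[0,T^\ast]$ -- indeed $(iv)$ is affine in $T$ and the Bernstein-type gradient estimate has a constant continuous in $T$. Hence $M^\ast:=\sup_{0\le t<T^\ast}\|\tilde w^\varepsilon(\cdot,t)\|<\infty$. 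Choosing $\delta\in(0,T^\ast)$ with $\delta<\tfrac12\tau(\varepsilon,M^\ast)$ and solving \eqref{eq:tilde_w_epsilon} from time $T^\ast-\delta$ with datum $\tilde w^\varepsilon(\cdot,T^\ast-\delta)$ (of norm $\le M^\ast$) via Theorem~\ref{thm:uniqueness_eps}, I obtain a smooth solution on $\RR\times[T^\ast-\delta,\,T^\ast-\delta+\tau(\varepsilon,M^\ast)]$ which glues with $\tilde w^\varepsilon$ by local uniqueness; since $T^\ast-\delta+\tau(\varepsilon,M^\ast)>T^\ast$, this contradicts maximality. Therefore $T^\ast=+\infty$. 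Uniqueness of the global solution follows from local uniqueness by the usual connectedness (open and closed) argument applied to $\{t\ge0:\text{the two solutions agree on }\RR\times[0,t]\}$, and the claimed local bounds are just Theorem~\ref{thm:W_uniform_bounds} read on each slab $\RR\times[0,T]$.

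The main obstacle I anticipate is the mismatch of function spaces: Theorem~\ref{thm:uniqueness_eps} lives in the global-in-$x$ norm $\|\cdot\|_{L^\infty(\RR)}+\|\partial_x\cdot\|_{L^\infty(\RR)}$, whereas Theorem~\ref{thm:W_uniform_bounds} is stated over compact space-time sets $Q$ (and $(iv)$ holds on $Q_R$ only for $\varepsilon<\varepsilon_R$). Making the continuation airtight thus hinges on extracting from the proof of Theorem~\ref{thm:W_uniform_bounds} that, at fixed $\varepsilon$, the comparison super/subsolutions behind $(i)/(iv)$ and the gradient estimate $(ii)$ are in fact uniform over all of $\RR$, and that their constants stay finite as $T\uparrow T^\ast<\infty$; once this is granted, the rest is routine.
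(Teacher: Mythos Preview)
Your argument is correct and follows exactly the paper's approach: a maximal-time continuation argument using the a priori bounds of Theorem~\ref{thm:W_uniform_bounds} to preclude finite-time blow-up, with uniqueness inherited from Theorem~\ref{thm:uniqueness_eps}. The paper's own proof is in fact just a two-line sketch of precisely this scheme, so your version is a faithful (and considerably more careful) expansion of it; the function-space mismatch you flag at the end is a genuine technical point that the paper's sketch also leaves implicit.
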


\begin{proof}{Proof of Corollary~\ref{cor:tilde_w_global_existence} assuming Theorem~\ref{thm:W_uniform_bounds}}~%

Consider the maximal interval of existence in time of the local solution $\tilde w^\varepsilon$. The local uniform boundedness of $\tilde w^\varepsilon$ allows to prove that interval is $[0, \infty)$, which implies the existence of a unique solution $\tilde w^\varepsilon$ defined globally in time. This in turn allows to apply Theorem~\ref{thm:W_uniform_bounds} globally in time, recovering the same bounds over every compact. Uniqueness follows from Theorem~\ref{thm:uniqueness_eps}.
\end{proof}

\begin{proof}[Proof of Theorem~\ref{thm:w_restriction_uniform_bounds}]~%

Assume Theorem~\ref{thm:W_uniform_bounds} and Corollary~\ref{cor:tilde_w_global_existence} hold. Then the restriction $(x,t) \in \RR_+ \times \RR_+ \mapsto \tilde w^\varepsilon (x, t)$ is well defined globally, bounded locally, and satisfies the equation~\eqref{eq:w_epsilon_1d}. Here too, uniqueness follows from Theorem~\ref{thm:uniqueness_eps}.
\end{proof}

To prove Theorem~\ref{thm:W_uniform_bounds}, we use the exact same comparison theorem as in~\cite{JB88}, relying on the maximum principle for linear parabolic PDE. We denote by $\bar B_R \subset \RR$ the closed ball centred at $0$ with radius $R>0$, and by $\Gamma_R:=\bar B_R \times \left\{0\right\} \cup \partial \bar B_R \times [0,T] $ the parabolic boundary of $Q_R := \bar B_R \times [0,T]$, whose interior we denote by $\mathring Q_R$.

\begin{lemma}[Maximum Principle, Friedman~\cite{Friedman64}] \label{lem:max_pple}
Define
\[
\mathscr{L} \varphi  := \partial_t \varphi  - \frac{\varepsilon}{2} \partial_{xx}^2 \varphi  + \partial_x \varphi  b^\varepsilon,
\]
where $b^\varepsilon$ is smooth. If $\mathscr{L} \varphi  \leq 0$ (respectively, $\geq 0$) in $\mathring Q_R$, then for all $(x,t) \in Q_R$,
\begin{align*}
 \varphi (x,t) \leq \sup_{(z,s)\in \Gamma_R} \varphi (z,s) \\
 \left( \text{respectively,} \qquad \inf_{(z,s)\in \Gamma_R} \varphi (z,s) \leq \varphi (x,t) \right).
\end{align*}
\end{lemma}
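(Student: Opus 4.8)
The plan is to reproduce the classical weak parabolic maximum principle (Friedman~\cite{Friedman64}) for the uniformly parabolic operator $\mathscr{L}$, which has no zeroth-order term. The ``$\geq 0$/$\inf$'' half follows from the ``$\leq 0$/$\sup$'' half by replacing $\varphi$ with $-\varphi$ (linearity of $\mathscr{L}$, same coefficient $b^\varepsilon$), so it suffices to show: if $\mathscr{L}\varphi \leq 0$ in $\mathring Q_R$ then $\varphi \leq \sup_{\Gamma_R}\varphi$ on $Q_R$. Throughout, $\varphi$ is $C^{2,1}$ on $Q_R$ — which holds here since all solutions in play are smooth — and continuous on the compact set $Q_R$, so its maximum is attained.

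First I would treat the strict case $\mathscr{L}\varphi < 0$; by continuity of $\mathscr{L}\varphi$ this extends from $\mathring Q_R$ to the top face, i.e.\ to all points with $t \in (0,T]$ and $x$ in the interior of $\bar B_R$. Let $(x_0,t_0)$ maximise $\varphi$ over $Q_R$ and suppose, for contradiction, that $(x_0,t_0)\notin \Gamma_R$, i.e.\ $x_0 \notin \partial \bar B_R$ and $t_0 \in (0,T]$. Since $x \mapsto \varphi(x,t_0)$ has an interior maximum at $x_0$, we get $\partial_x\varphi(x_0,t_0) = 0$ and $\partial_{xx}^2\varphi(x_0,t_0) \leq 0$; and since $t\mapsto \varphi(x_0,t)$ is maximised at $t_0$ with $t_0 > 0$, either $t_0 < T$ and $\partial_t\varphi(x_0,t_0) = 0$, or $t_0 = T$ and $\partial_t\varphi(x_0,t_0) \geq 0$ as a one-sided derivative (a strictly negative value would make $\varphi$ larger just before $T$, contradicting maximality). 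In every case,
\[
\mathscr{L}\varphi(x_0,t_0) = \partial_t\varphi(x_0,t_0) - \tfrac{\varepsilon}{2}\partial_{xx}^2\varphi(x_0,t_0) + b^\varepsilon(x_0,t_0)\,\partial_x\varphi(x_0,t_0) \geq 0,
\]
contradicting $\mathscr{L}\varphi < 0$. Hence the maximum of $\varphi$ over $Q_R$ is attained on $\Gamma_R$.

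To remove strictness, given only $\mathscr{L}\varphi \leq 0$ I would set $\varphi_\delta(x,t) := \varphi(x,t) - \delta t$ for $\delta > 0$. Because $\mathscr{L}$ carries no zeroth-order term, $\mathscr{L}(\delta t) = \delta$, so $\mathscr{L}\varphi_\delta = \mathscr{L}\varphi - \delta \leq -\delta < 0$; the strict case applies and gives $\varphi_\delta \leq \sup_{\Gamma_R}\varphi_\delta \leq \sup_{\Gamma_R}\varphi$ on $Q_R$ (the last inequality since $t \geq 0$ on $\Gamma_R$), whence $\varphi(x,t) \leq \sup_{\Gamma_R}\varphi + \delta T$ on $Q_R$. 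Letting $\delta \downarrow 0$ yields $\varphi \leq \sup_{\Gamma_R}\varphi$ on $Q_R$, and applying this to $-\varphi$ gives the $\inf$ statement.

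The argument is essentially routine; the only genuine care-points are (a) the one-sided sign $\partial_t\varphi(x_0,T) \geq 0$ when the maximiser lands on the top face $\{t = T\}$ — which is \emph{not} part of $\Gamma_R$ — handled via the left derivative, and (b) the passage from the non-strict to the strict inequality, which works precisely because $\mathscr{L}$ has no zeroth-order term, so the linear perturbation $-\delta t$ strictly improves the differential inequality while being harmless on $\Gamma_R$. The smoothness of $\varphi$ from the preceding existence results removes any regularity obstruction, and $b^\varepsilon$ enters only through the term $b^\varepsilon \partial_x\varphi$, which vanishes at any interior spatial extremum, so no boundedness of $b^\varepsilon$ is needed.
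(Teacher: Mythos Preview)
Your proof is correct and is precisely the classical argument from Friedman~\cite{Friedman64}; the paper does not supply its own proof of this lemma but simply cites Friedman, so there is nothing further to compare.
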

 
\begin{lemma}[Comparison theorem, James and Baras~\cite{JB88} Lemma 4.2] \label{lem:comparison_theorem}~%

Let $\varepsilon >0$. Let $\tilde w^\varepsilon$ be a solution of~\eqref{eq:w_epsilon_1d} 
 over $\RR \times [0,T]$ and define
\[
\tilde{\mathscr{L}} : v \in C^1(\mathring Q_R; \RR) \mapsto \partial_t v - \frac{\varepsilon}{2} \partial_{xx}^2 v + g \partial_x v + \frac{1}{2} \left| \partial_x v \right|^2 - \tilde V_w^\varepsilon,
\]
$g(x) = {\rm{sgn}}(x) f(|x|)$ and $\tilde{V}_w^\varepsilon$ being defined in \eqref{eq:def_Vtildeeps}. Let $v \in C^1(\mathring Q_R; \RR)$ and $\varphi = v - \tilde w^\varepsilon$. If $\mathscr{L} v \geq 0$ (respectively, $\mathscr{L} v \leq 0$) in $\mathring Q_R$ and if $\tilde w^\varepsilon \leq v$ (resp. $v \leq \tilde w^\varepsilon$) on $\Gamma_R$, then $\tilde w^\varepsilon \leq v$ (resp. $v \leq \tilde w^\varepsilon$) in $\mathring Q_R$.
\end{lemma}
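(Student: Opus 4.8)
The plan is to linearise the nonlinear operator $\tilde{\mathscr{L}}$ along the segment joining $\tilde w^\varepsilon$ and $v$, turning the difference $\varphi = v - \tilde w^\varepsilon$ into a subsolution of a \emph{linear} parabolic operator, and then to invoke the maximum principle of Lemma~\ref{lem:max_pple}. I treat the supersolution case ($\tilde{\mathscr{L}} v \ge 0$ in $\mathring Q_R$, $\tilde w^\varepsilon \le v$ on $\Gamma_R$, to conclude $\tilde w^\varepsilon \le v$); the subsolution case is identical with every inequality reversed and the ``$\le 0$'' half of Lemma~\ref{lem:max_pple} used instead.

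\textbf{Step 1 (linearisation).} Set $\varphi := v - \tilde w^\varepsilon$. Since $\tilde w^\varepsilon$ solves~\eqref{eq:tilde_w_epsilon}, it satisfies $\tilde{\mathscr{L}}\tilde w^\varepsilon = 0$ in $\mathring Q_R$, so subtracting from $\tilde{\mathscr{L}} v \ge 0$ gives $\tilde{\mathscr{L}} v - \tilde{\mathscr{L}}\tilde w^\varepsilon \ge 0$. The terms $-\tfrac{\varepsilon}{2}\partial_{xx}^2(\cdot)$, $g\,\partial_x(\cdot)$ and $-\tilde V_w^\varepsilon$ are affine in the unknown, so they contribute exactly $-\tfrac{\varepsilon}{2}\partial_{xx}^2\varphi + g\,\partial_x\varphi$ (the potential cancels). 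For the quadratic term I use the factorisation
\[
\tfrac12\big(|\partial_x v|^2 - |\partial_x \tilde w^\varepsilon|^2\big) = \tfrac12\big(\partial_x v + \partial_x \tilde w^\varepsilon\big)\,\partial_x\varphi ,
\]
so that $\tilde{\mathscr{L}} v - \tilde{\mathscr{L}}\tilde w^\varepsilon = \mathscr{L}\varphi$, where $\mathscr{L}$ is the linear operator of Lemma~\ref{lem:max_pple} with drift $b^\varepsilon := g + \tfrac12(\partial_x v + \partial_x \tilde w^\varepsilon)$. Because $f \in C^1_b$ with $f(0)=0$, the function $g(x) = {\rm sgn}(x) f(|x|)$ is $C^1$, and $\partial_x \tilde w^\varepsilon$ is smooth while $\partial_x v$ is continuous, so $b^\varepsilon$ has the regularity needed to apply Lemma~\ref{lem:max_pple}.

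\textbf{Step 2 (maximum principle and conclusion).} From Step~1, $\mathscr{L}\varphi \ge 0$ in $\mathring Q_R$, so the ``$\ge 0$'' conclusion of Lemma~\ref{lem:max_pple} gives $\inf_{(z,s)\in\Gamma_R}\varphi(z,s) \le \varphi(x,t)$ for every $(x,t)\in Q_R$. By hypothesis $\varphi = v - \tilde w^\varepsilon \ge 0$ on the parabolic boundary $\Gamma_R$, hence $\varphi \ge 0$ on all of $\mathring Q_R$, i.e. $\tilde w^\varepsilon \le v$ there. The subsolution statement follows verbatim by reversing inequalities and using $\varphi \le \sup_{\Gamma_R}\varphi$.

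\textbf{Main obstacle.} The computation is routine linearisation; the only genuinely delicate point is the regularity of the linearised drift $b^\varepsilon$ at the origin and of the comparison function $v$. One must verify that $g$ is $C^1$ across $x=0$ (which is exactly where the assumption $f(0)=0$ is used), and that $v$ is smooth enough for $\partial_{xx}^2 v$ to exist and $b^\varepsilon$ to be admissible in Friedman's maximum principle — in the applications of this lemma $v$ is an explicit barrier (a paraboloid in $x$ plus a term affine in $t$), so this holds automatically. The restriction to the bounded cylinder $Q_R$ is also what confines the comparison to $\mathring Q_R$ rather than to all of $\RR\times[0,T]$.
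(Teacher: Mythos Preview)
Your proof is correct and follows essentially the same route as the paper: subtract $\tilde{\mathscr{L}}\tilde w^\varepsilon = 0$ from $\tilde{\mathscr{L}} v \ge 0$, factor the quadratic term as $\tfrac12(\partial_x v + \partial_x \tilde w^\varepsilon)\,\partial_x\varphi$, set $b^\varepsilon = g + \tfrac12(\partial_x v + \partial_x \tilde w^\varepsilon)$, and apply Lemma~\ref{lem:max_pple} to $\varphi$. Your additional remarks on the regularity of $b^\varepsilon$ and of $v$ go slightly beyond what the paper writes but are pertinent and do not change the argument.
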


\begin{proof}[Same proof as in~\cite{JB88}:]~%
If $\tilde{\mathscr{L}} v \geq 0$, then subtract $\tilde{\mathscr{L}} w^\varepsilon = 0$ to get
\[
\partial_t \varphi - \frac{\varepsilon}{2} \partial_{xx}^2 \varphi  + g \partial_x \varphi + \frac{1}{2}\left(|\partial_x v|^{2}-\left|\partial_x \tilde w^{\varepsilon}\right|^{2}\right) \geq 0
\]
Now $|\partial_x v|^{2} - \left|\partial_x \tilde w^{\varepsilon}\right|^2 = \partial_x \varphi \cdot \left(\partial_x v+ \partial_x \tilde w^{\varepsilon}\right).$ Set
\[
b^{\varepsilon} = g + \frac{1}{2}\left(\partial_x v+ \partial_x \tilde w^\varepsilon \right) .
\]
Then $\mathscr{L} \varphi \geq 0$ and on $\Gamma_{R}$, $\varphi(z, s) \geq 0$. Hence $\varphi(x, t) \geq 0$ for all $(x, t) \in Q_{R}$ by Lemma~\ref{lem:max_pple}.
\end{proof}

\begin{proof}[Proof of Theorem~\ref{thm:W_uniform_bounds}]~%
The proof is very close to that given by James and Baras in~\cite{JB88}, inspired in~\cite{EI85}. It relies on the construction of a function $v$ independent of $\varepsilon$ such that $\tilde{\mathscr{L}} v \geq 0$ in $\mathring Q_R$ and $w^{\varepsilon} \leq v$ on $\Gamma_{R}$, independent of (sufficiently small) $\varepsilon>0$, which is achieved by making $v$ tend to $\infty$ close to the boundary. 

\paragraph{Proof of $(iv)$.}~%
Let $R \geq 4 \max (1, 2 \|g\|)$ and $\varepsilon \leq \varepsilon_R := \frac{1}{2 R^2}$. Define
\[
v(x, t)=\frac{1}{R^{2}-|x|^{2}}+\mu t+M
\]
where the constants $\mu>0, M>0$ will be adequately chosen later. Then 
\[
\begin{aligned}
\tilde{\mathscr{L}} v=& \mu-\frac{\varepsilon}{2}\left(\frac{2}{\left(R^{2}-|x|^{2}\right)^{2}}+\frac{8|x|^{2}}{\left(R^{2}-|x|^{2}\right)^{3}}\right)  + \frac{2 x}{\left(R^{2}-|x|^{2}\right)^{2}} \cdot g + \frac{2|x|^{2}}{\left(R^{2}-|x|^{2}\right)^{4}} - \tilde V_w^{\varepsilon} \\
= & \mu + \frac{1}{\left( R^2 - x^2 \right)^4} \left[ x^2  - \varepsilon \left(R^2 - x^2\right) \left(4x^2 + (R^2 - x^2) \right) \right]   + \mathscr{\tilde E}_R (x) + \mathscr{G}_R(x) - \tilde V_w^\varepsilon (x,t),
\end{aligned}
\]
where we define:
\begin{equation*}
\left\{\begin{aligned}
\mathscr{\tilde E}_R(x) & := \frac{2 x g (x)}{\left(R^2 - x^2 \right)^2} \quad \geq \quad - \mathscr{E}_R(x) := - \frac{2 |x| \|g\|_{L^\infty(\RR)}}{\left(R^2 - x^2 \right)^2}, \\
\mathscr{G}_R(x) & := \frac{x^2}{\left(R^2 - x^2 \right)^4} \geq 0.
\end{aligned}\right.\end{equation*}

Hence,
\[
\begin{aligned}
\tilde{\mathscr{L}} v \geq & \mu + \frac{1}{\left( R^2 - x^2 \right)^4} \left[ x^2  - \varepsilon \left(R^4 + 2 R^2 x^2 - 3 x^4 \right) \right] - \mathscr{E}_R (x) + \mathscr{G}_R(x) - \tilde V_w^\varepsilon (x,t) \\
\geq & \mu + \frac{1}{\left( R^2 - x^2 \right)^4} \left[ x^2  - \varepsilon \frac{4}{3 R^2} \right] - \mathscr{E}_R (x) + \mathscr{G}_R(x) - \tilde V_w^\varepsilon (x,t).
\end{aligned}
\]
For $\varepsilon < \varepsilon_R$: either $|x| \geq 1$ and it follows that $x^2  - \varepsilon \frac{4}{3 R^2} \geq 0$; or $|x| < 1$ and 
\[\frac{1}{\left( R^2 - x^2 \right)^4} \left[ x^2  - \varepsilon \frac{4}{3 R^2} \right] \geq \frac{-1}{\left(R^2 - 1\right)^4}.\]
Hence,
\begin{equation} \label{eq:Lv_fundamental}
\tilde{\mathscr{L}} v \geq \mu - \frac{1}{\left(R^2 - 1\right)^4} - \mathscr{E}_R (x) + \mathscr{G}_R(x) - \| \tilde V_w^\varepsilon\|_{L^\infty(\RR \times [0,T])}.
\end{equation}

\paragraph{Claim:}
For all $(x,t) \in Q_R$,
\[
- \mathscr{E}_R(x) + \mathscr{G}_R(x) \geq - 8 \max (1, \|g\|_{L^\infty(\RR)}^2).
\]

\begin{proof}[Proof of the Claim]
We will prove the Claim for $x \in [0, R)$ now. \textit{Mutatis mutandis}, the proof for $x \leq 0$ follows with no notable difference. Let $C = 4 \max \left(1, \sqrt {\|g\|} \right)$, and $\eta = \frac{1}{C \sqrt R}$.
\begin{itemize}
\item If $ x \leq R-\eta$:
\[
\begin{aligned}
\mathscr{E}_R (x) & \leq \mathscr{E}_R (R - \eta) = 2 \|g\| \frac{R-\eta}{\left( R^2 - \left(R-\eta\right)^2 \right)^2} = \frac{ C^2 \|g\| }{2}  \frac{R - \frac{1}{C \sqrt R}}{R - \frac{1}{C \sqrt R} + \frac{1}{4 C^2 R^2}} \\
	& \leq \frac{ C^2 \|g\| }{2} \leq 8 \max ( 1, \|g\|^2 ).
\end{aligned}
\]
And since $\mathscr{G}_R (x) \geq 0$, the claimed inequality is satisfied.
\item Otherwise, $ R - \eta < x < R$:
\[
(R^2-x^2)^2 \mathscr{G}_R (x) \geq (R^2-(R-\eta)^2)^2 \mathscr{G}_R (R-\eta) = \frac{\left(R-\eta \right)^2}{ \left(2\eta R - \eta^2 \right)^2} = \frac{C^2}{4} \frac{R^2 - \frac{2 \sqrt{R}}{C} + \frac{1}{C^2R}}{R - \frac{C}{\sqrt{R}} + \frac{C^2}{4R^2}}.
\]
To bound below the last fraction on the right-hand side, observe that since $C > 1$ and $R \geq 4$, we have $\frac{2R}{C} < \frac{R^2}{2}$; and since
\[
 R \sqrt{R} \geq 8 R \geq 8 \max (4, 8 \|g\|) > C = 4 \max (1, \sqrt{\|g\|}),
\]
  we have:
\[
\frac{C}{\sqrt{R}} - \frac{C^2}{4R^2} = \frac{C}{\sqrt{R}} \left[1 - \frac{C}{4R \sqrt{R}}\right] \geq 0.
\]
We obtain:
\[
(R^2-x^2)^2 \mathscr{G}_R (x) \geq \frac{C^2 R}{8}.
\]
Therefore,
\[
\mathscr{G}_R (x) - \mathscr{E}_R (x) = (R^2-x^2)^2 \left[ \mathscr{G}_R (x) - 2 x \|g\| \right] \geq  (R^2-x^2)^2 \left[ \frac{C^2 R}{8} - 2 R \|g\| \right] \geq 0,
\]
because $C^2 \geq 16 \|g\|$.
\end{itemize}
\end{proof}
From the Claim and equation~\eqref{eq:Lv_fundamental}, it is clear that $Lv \geq 0$ over $Q_R$, provided $\mu$ is chosen sufficiently large. Specifically,
\begin{equation} \label{eq:def_mu}
\mu = \frac{1}{\left(R^2 - 1\right)^4} + 8 (1 + \|g\|\|_{L^\infty(\RR)}) + \| \tilde V_w^\varepsilon\|_{L^\infty(\RR \times [0,T])}
\end{equation}
suffices. Choose now $M = \|w_0\|_{L^\infty(\RR)}$: large enough that
\[
w_{0}(x) \leq M \text { for all } x \in B_{R}.
\]
Since $v(x, t) \rightarrow \infty$ as $|x| \rightarrow R$ uniformly in $t \in[0, T]$, it follows from the maximum principle that
\[
\tilde w^{\varepsilon} \leq v \quad \text { in } \quad \mathring Q_R.
\]
Similarly, by considering $-v$ instead of $v$, we can find a similar upper bound for $\tilde w^\varepsilon$.

Since $v$ is continuous in $\mathring Q_R$ and $\max_{|x| \leq R/2} \frac{1}{\left(R^2-x^2\right)^2} = \frac{4}{3 R^2}$, the following bound over $Q_{R/2}$ follows.

\begin{equation} \label{eq:end_of_(i)}
\|\tilde w^\varepsilon \|_{L^\infty(Q_{R/2})} \leq \frac{4}{3 R^2} + \|w_0\|_{L^\infty} + \mu T,
\end{equation}
with $\mu$ defined in~\eqref{eq:def_mu}. Hence,
  \begin{equation} 
\| \tilde w^\varepsilon \|_{L^\infty (Q_{R/2})} \leq \|\tilde w_0\|_{L^\infty (\RR)} + \left[8(1+\|g\|_{L^\infty (\RR)}) + \|V^\varepsilon\|_{L^\infty(\RR \times [0,T])} + \frac{1}{(R^2-1)^4} \right] T + \frac{4}{3 R^2}.
\end{equation}
The desired estimate follows, concluding the proof of $(iv)$. 

\paragraph{Proof of $(i)$.}~%
$(iv) \Rightarrow (i)$.

\paragraph{Proof of $(ii)$.}~%
The estimate of the partial derivative in $x$ closely follows~\cite{JB88}, using a variant of the techniques in~\cite{EI85}. It consists of the following steps.

\begin{itemize}
\item Define $Q \subset \subset Q^{\prime} \subset \subset \mathbb{R} \times (0, T)$, where $Q, Q^{\prime}$ are open and ``$\subset \subset$" means ``compactly contained in". 
\item Choose a smooth function $\zeta$ such that $\zeta \equiv 1$ on $Q$ and $\zeta \equiv 0$ near $\partial Q^{\prime}$, and define
\[
z:= \zeta^{2} |\tilde w^\varepsilon|^2 - \lambda \tilde w^\varepsilon,
\]
where $\lambda > 0$ will be chosen later.
\item Apply the maximum principle to $z$:
$z$ reaches its maximum in $\bar Q'$. Assume it's reached at $(x_0, t_0) \in Q'$. Then, since $z$ is smooth,
\[
\left\{\begin{aligned}
\partial_x z & = 0, \\
0 & \leq \partial_t z - \frac{\varepsilon}{2} \partial_{x}^2 z. 
\end{aligned}\right.
\]
\item Writing the previous inequality explicitly in terms of $\zeta$ and $\tilde w^\varepsilon$ and using the Hamilton-Jacobi equation satisfied by $\tilde w^\varepsilon$ yields, for $\varepsilon$ sufficiently small, at $(x_0, t_0)$:
\[
0 \leq - \partial_x \tilde w^\varepsilon \cdot \partial_x \left(\zeta^{2} |\partial_x \tilde w^\varepsilon|^2 \right) - g \cdot \partial_x \left(\zeta^{2} |\partial_x \tilde w^\varepsilon|^2\right) + \frac{\lambda}{2} |\partial_x \tilde w^\varepsilon|^2+ C \zeta|\partial_x \tilde w^\varepsilon|^{3}+C|\partial_x \tilde w^\varepsilon|^{2}+\lambda C|\partial_x \tilde w^\varepsilon|+\lambda C,
\]
where we recall that $C$ is a generic constant name.
Using now $\partial_x z = 0$ at $(x_0, t_0)$, we have
\[
\frac{\lambda}{2}|\partial_x \tilde w^\varepsilon|^{2} \leq C \zeta|\partial_x \tilde w^\varepsilon|^{3}+C|\partial_x \tilde w^\varepsilon|^{2}+\lambda C|\partial_x \tilde w^\varepsilon|+\lambda C.
\]
\item Choosing $\lambda=\mu[(\max \zeta) |\partial_x w^\varepsilon|+1]$, with $\mu > 1$ to be chosen yields:
\[
\frac{\mu}{2}|\partial_x \tilde w^\varepsilon|^2 \leq C|\partial_x \tilde w^\varepsilon|^{2}+C \lambda \mu .
\]
Hence for $\mu$ large enough, at $(x_0, t_0)$,
\[
|\partial_x \tilde w^\varepsilon|^{2} \leq C \lambda.
\]
Hence:
\[
z \leq C \lambda \text { in } Q^{\prime} \text { . }
\]
\item If the max is reached at the boundary, the equation above holds since $\tilde w^\varepsilon$ is bounded. From it, James and Baras recover:
\[
\max \zeta^{2}|\partial_x \tilde w^\varepsilon|^{2} \leq \max z+C \lambda \leq C \lambda
\]
and by definition of $\lambda$,
\[
\max \zeta^{2}|\partial_x \tilde w^\varepsilon|^{2} \leq C \mu[\max \zeta|\partial_x \tilde w^\varepsilon|+1],
\]
which implies
\[
\zeta|\partial_x \tilde w^\varepsilon| \leq C \quad \text { in } Q^{\prime} \text { , }
\]
so
\[
|\partial_x \tilde w^\varepsilon| \leq C \quad \text { in } \bar{Q},
\]
concluding the proof.~%

\paragraph{Proof of $(iii)$.}~%
Since $\hamilton$ is locally bounded, the conditions of~\cite[Lemma 5.2]{CL84} are met (with $\frac{\varepsilon}{2}$ here, instead of $\varepsilon$), which allows to conclude to the $\varepsilon$-dependent Hölder estimate:
\[
\forall (x,t), (x,s) \in Q, \qquad \left| \tilde w^\varepsilon (x,t) - \tilde w^\varepsilon (x,s) \right|  \leq K \left(\sqrt \varepsilon \left| t-s \right|^{1/2} + |t-s| \right).
\]
Since $\varepsilon \in (0,1)$, taking $\varepsilon = 1$ in the right-hand side concludes the proof.


\end{itemize}
\end{proof}

\subsection{Viscosity solution limit -- Proof of Theorem~\ref{thm:S}.}


\begin{theorem} \label{thm:existence_and_uniqueness_HJ_w}
Assume $w_0$ is bounded and Lipschitz continuous, and $\hamilton$ satisfies the assumptions of Theorem \ref{thm:existence_and_uniqueness}.
Then there exists a unique viscosity solution of the limiting equation~\eqref{eq:w_limit}, defined over $(x,t) \in \RR_+ \times \RR_+$, and that solution can be obtained by the vanishing viscosity method.
\end{theorem}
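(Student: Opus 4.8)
The plan is to obtain $w$ as the vanishing-viscosity limit of the family $(w^\varepsilon)_{\varepsilon>0}$ of Theorem~\ref{thm:w_restriction_uniform_bounds}, to identify any of its limit points as a viscosity solution of~\eqref{eq:w_limit} in the sense of Definition~\ref{def:visc_sol_HJ} by the classical stability of viscosity solutions under uniform convergence, and then to invoke the comparison principle of Theorem~\ref{thm:existence_and_uniqueness} to get uniqueness and, as a by-product, convergence of the whole family. Throughout I would work with the even extensions $\tilde w^\varepsilon$ of $w^\varepsilon$, which by Corollary~\ref{cor:tilde_w_global_existence} are the global smooth solutions of the boundary-free equation~\eqref{eq:tilde_w_epsilon} on $\RR\times\RR_+$, both because the uniform-in-$\varepsilon$ estimates were produced at that level (Theorem~\ref{thm:W_uniform_bounds}) and because the sign-twisted form of $\tilde\hamilton^\varepsilon$ encodes the homogeneous Neumann condition automatically.

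\emph{Compactness and stability.} Estimates $(i)$--$(iii)$ of Theorem~\ref{thm:W_uniform_bounds} make $(\tilde w^\varepsilon)$ equibounded (in fact, by the $R$-independent estimate $(iv)$, bounded in $L^\infty(\RR\times[0,T])$ uniformly in small $\varepsilon$) and equicontinuous on every compact. By Arzelà--Ascoli and a diagonal extraction over an exhaustion of $\RR\times\RR_+$, some $\varepsilon_k\downarrow 0$ yields $\tilde w^{\varepsilon_k}\to\tilde w$ locally uniformly, with $\tilde w$ even, continuous and bounded on $\RR\times[0,T]$. To see $\tilde w$ is a viscosity solution of the limiting symmetrised equation, let $\phi\in C^1(\RR\times\RR_+)$ with $\tilde w-\phi$ having a strict local maximum at $(x_0,t_0)$ (the general case follows by perturbing $\phi$ with $|x-x_0|^2+|t-t_0|^2$). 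For $k$ large, $\tilde w^{\varepsilon_k}-\phi$ has a local maximum at some $(x_k,t_k)\to(x_0,t_0)$, where $\partial_x\tilde w^{\varepsilon_k}=\partial_x\phi$, $\partial_t\tilde w^{\varepsilon_k}=\partial_t\phi$ and $\partial_{xx}^2\tilde w^{\varepsilon_k}\le\partial_{xx}^2\phi$, so~\eqref{eq:tilde_w_epsilon} gives
\[
\partial_t\phi(x_k,t_k)+\tilde\hamilton^{\varepsilon_k}\!\bigl(x_k,t_k,\partial_x\phi(x_k,t_k)\bigr)\ \le\ \tfrac{\varepsilon_k}{2}\,\partial_{xx}^2\phi(x_k,t_k);
\]
letting $k\to\infty$ and using the local uniform convergence $\tilde\hamilton^\varepsilon\to\tilde\hamilton$ (Proposition~\ref{prop:H_eps_CV_H_1d} applied to the reflected Hamiltonians) produces $\partial_t\phi+\tilde\hamilton(\cdot,\partial_x\phi)\le0$ at $(x_0,t_0)$; the supersolution inequality is symmetric. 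The initial datum is attained in the classical sense since $\tilde w^\varepsilon(\cdot,0)=\tilde w_0$ for every $\varepsilon$ and the $\varepsilon$-uniform Hölder-in-$t$ bound $(iii)$ forbids an initial layer.

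\emph{Restriction, uniqueness, and the whole family.} Since $g(0)=f(0)=0$, the reflected Hamiltonian $\tilde\hamilton^\varepsilon(x,t,\lambda)=\hamilton(|x|,t,{\rm sgn}(x)\lambda)$ and its limit are continuous (indeed $C^1$ in $x$), so the reflection argument of~\cite{Strauss08}, applied to the even function $\tilde w$, identifies $w:=\tilde w|_{\RR_+}$ as a viscosity solution of the homogeneous Neumann problem~\eqref{eq:w_limit}: at a boundary test point $(0,t_0)$ the required inequality is supplied either by the interior Hamiltonian term or by the boundary term $-\partial_x\phi(0,t_0)$, which is precisely the $\min/\max$ alternative of Definition~\ref{def:visc_sol_HJ}. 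For uniqueness we use Theorem~\ref{thm:existence_and_uniqueness}: as noted in the excerpt, $\hamilton$ (a quadratic in $\lambda$) is locally Lipschitz, convex and coercive in its last variable, the boundary operator $B(x,t,\lambda)=-\lambda$ is linear and satisfies the strict monotonicity~\eqref{eq_Barles93_4} with $\nu_R\equiv1$, and $w_0$ is bounded and (being Lipschitz) uniformly continuous; hence there is at most one bounded continuous viscosity solution of~\eqref{eq:w_limit}. The $w$ built above is such a solution, giving existence; and since every subsequential limit of $(w^\varepsilon)$ is again a bounded continuous viscosity solution, uniqueness forces them all to equal $w$, so in fact $w^\varepsilon\to w$ locally uniformly as $\varepsilon\to0$. (Equivalently, the Barles--Perthame half-relaxed limits $\limsup^{*}w^\varepsilon$ and $\liminf_{*}w^\varepsilon$ coincide here, thanks to the a priori equicontinuity.)

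\emph{Main obstacle.} The genuinely delicate step is the faithfulness of the reflection at $x=0$: showing that an even viscosity solution of the boundary-free equation on $\RR$ really restricts to a viscosity solution of the Neumann problem on $\RR_+$ in the exact sense of Definition~\ref{def:visc_sol_HJ}, and not merely to a function satisfying the interior equation away from the boundary. This is where the standing assumption $f(0)=0$ (so that $g$ is continuous, indeed $C^1$, across $0$; cf. Remark~\ref{rem:boundary-extension}) and the specific sign structure of $\tilde\hamilton^\varepsilon$ are used, and it is also why the $\varepsilon$-uniform bounds had to be established for $\tilde w^\varepsilon$ first: the even extension must not degrade them.
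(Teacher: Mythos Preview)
Your argument is correct, but the route differs from the paper's in one structural respect. The paper uses the even extension $\tilde w^\varepsilon$ \emph{only} to manufacture the a~priori bounds of Theorem~\ref{thm:w_restriction_uniform_bounds}; once those are in hand it drops the extension and works directly with $w^\varepsilon$ on $\RR_+$. Arzel\`a--Ascoli gives a subsequence $w^{\varepsilon_k}\to w$ locally uniformly, estimate $(iv)$ gives boundedness of $w$, and then the paper simply invokes the stability theorem of Barles~\cite{Barles93}, which is formulated precisely for first-order Hamilton--Jacobi equations with Neumann boundary operators satisfying~\eqref{eq_Barles93_4}; the boundary condition in the viscosity sense of Definition~\ref{def:visc_sol_HJ} is therefore obtained for free. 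Uniqueness is Theorem~\ref{thm:existence_and_uniqueness}, and that also upgrades subsequential convergence to convergence of the full family.

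Your route---pass to the limit for $\tilde w^\varepsilon$ on the whole line, then restrict and check the Neumann condition by hand---is legitimate and more self-contained (you redo the stability argument instead of citing it), but it buys you the extra task you yourself label the ``main obstacle'': verifying that an even viscosity solution on $\RR$ restricts to a Neumann viscosity solution on $\RR_+$. That step is correct under $f(0)=0$ (so that $\tilde\hamilton$ is continuous across $x=0$), but it is exactly what the paper sidesteps by staying on the half-line and quoting a stability result tailored to the boundary problem.
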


\begin{proof}
The bounds $(i)$, $(ii)$, and $(iii)$ of Theorem~\ref{thm:w_restriction_uniform_bounds} and the Arzela-Ascoli theorem (see \textit{e.g.}~\cite[Theorem 1]{Lions:1985ve}) imply that there exists a decreasing subsequence $(\varepsilon_k)_{k \in \N}$ that tends to $0$ such that $w^{\varepsilon_k}$ converges uniformly over compact sets to a continuous function $w$. From bound $(iv)$, 
it follows that $w$ is bounded over $\RR_+ \times [0,T]$. Since $\hamilton^\varepsilon$ also converges uniformly over compact sets to $\hamilton$, Proposition~\ref{prop:H_eps_CV_H_1d}, we may apply the stability result in~\cite{Barles93}. Uniqueness results from Theorem~\ref{thm:existence_and_uniqueness}.
\end{proof}



We may now consider, for $0 < \varepsilon < 1$ and for all $(x,t) \in \RR_+ \times [0,T]$:
\[
\entropy^\varepsilon (x,t) = w^\varepsilon (x,t) + y(t) h(x).
\]
By construction, $\entropy^\varepsilon$ is smooth and satisfies the second order evolution Hamilton-Jacobi equation~\eqref{eq:S_epsilon}. The uniqueness of the solution to that equation -- point $(i)$ of Theorem~\ref{thm:S} -- is a direct corollary of Theorem~\ref{thm:w_restriction_uniform_bounds}. The local uniform convergence of the Hamiltonian $\hamilton_{\entropy}^\varepsilon$ to $\hamilton_{\entropy}$, point $(ii)$, results from Proposition~\ref{prop:H_eps_CV_H_1d}. Since $y$ and $h$ are bounded and have bounded derivatives, appropriate bounds can be obtained on $\entropy^\varepsilon$ of the type of those in Theorem~\ref{thm:w_restriction_uniform_bounds}. So $(iii)$, the convergence of $\entropy^\varepsilon$ to $\entropy$, follows from that of $w^\varepsilon$ to $w$ in the proof of Theorem~\ref{thm:existence_and_uniqueness_HJ_w}.

As for point $(iv)$, the well-posedness of the limit equation follows from that of equation~\eqref{eq:w_limit}: Theorem~\ref{thm:existence_and_uniqueness_HJ_w}. Since $y$ and $h$ are smooth enough, $yh$ may be added or subtracted to any test function, guaranteeing that definition~\ref{def:visc_sol_HJ} applies for $w$ in equation~\eqref{eq:w_limit} if and only if it applies for $\entropy$ in equation~\eqref{eq:S}. The vanishing viscosity limit procedure also works in a similar way, concluding the proof of Theorem~\ref{thm:S}.

\section{Dynamic programming principle for the HJB limit}

At this point, we have identified a functional from the viscosity solution limit which could be a good candidate for defining the cost-to-come associated with the  Mortensen estimator  of the Skorohod’s problem. Unfortunately, despite its stochastic interpretation, the viscosity solution limit cannot be linked as in \cite{JB88} to the cost-to-come of the deterministic filtering problem.

\subsection{A control problem interpretation of the limit solution}\label{ssec:control_interpretation} 

The limit $w(x,t) = \entropy(x,t) - y(t)h(x)$ can be characterised as the unique viscosity solution of the HJB equation \eqref{eq:w_limit}. Following the method of \cite{JB88}, a backward control process is now built whose cost function $\costshift$ will be identified to $w$. Consider the control process associated to the $\R_{+}$-valued backward trajectories $\left( z_{\omega}^{x,t}(s) \right)_{0 \leq s \leq t}$ defined by
\begin{equation}  \label{controlBack}
\begin{cases}
 \forall s \mbox{ a.e.} \in [0,t], \: \forall q \geq 0, \: (\dot z_\omega^{x,t}(s)-\omega(s))(q - z_\omega^{x,t}(s)) \leq 0\\
z_{\omega}^{x,t}(t) = x, 
\end{cases}
\end{equation}
Assume this system is partially known through the perturbed observation function $y(t)$ given by
\[ 
	\dot{y}(s) = h( z_{\omega}^{x,t}(s) ) + \eta (s),
\]
the control parameters $\eta$ and $\omega$ being square-integrable $\R$-valued functions of time. To each such $\omega$ can be associated a backward trajectory $z_{\omega}^{x,t}$. The control problem then consists in minimizing a functional $\psi( z_{\omega}^{x,t}(0) )$ of the arrival point at time $0$, together with the $L^2$ weights of control functions $\omega$ and $\eta$. The cost rate is thus
\[ 
	\tilde{\timeError}(z,\omega,s) := \frac{1}{2} \omega^2 + \frac{1}{2} | \dot{y}(s) - h ( z ) |^2,
\]
so that the cost to go (backward in time) from $x$ at time $t$ to time $0$ reads
\[ 
	\inf_{\omega \in L^2(0,t)} \psi ( z_{\omega}^{x,t}(0) ) + \int_0^t \tilde{\timeError} \left( z_{\omega}^{x,t}(s),{\omega}(s),s \right) \dd s, 
\]
developing the square $| \dot{y}(s) - h ( z(s) ) |^2$, the term $| \dot{y}(s) |^2$ doesn't affect the minimization problem, and the cost rate can be chosen to be
\[ 
	\timeError(z(s),{\omega}(s),s) := \frac{1}{2}{\omega}^2(s) +  \frac{1}{2} h^2 ( z(s) ) - \dot{y}(s) h ( z(s) ),
\]
as required to take the limit in the probabilistic setting. This leads to the functional
\[ 
	\criter(x,\omega,t) := \psi( z_{\omega}^{x,t}(0) ) + \int_0^t \timeError \left( z_{\omega}^{x,t}(s),{\omega}(s),s \right) \dd s,
\]
and the cost function $\costshift(x,t) := \inf_{\omega} \criter(x,\omega,t)$ will appear to be the desired target function. Note the initial value condition $\costshift(x,0) = \psi(x)$.

\begin{lemma}[Principle of Optimality] \label{Optiprinciple}
Consider a terminal point $(x,t)$ together with a control ${\omega}$; then for every $0 < \tau < t$
\[ \costshift(x,t) = \inf_{\omega \in L^2(t-\tau,t)} \left[ \costshift \left( z^{x,t}_{\omega} ( t - \tau ) , t-\tau \right) + \int_{t-\tau}^t \timeError \left( z_{\omega}^{x,t}(s),{\omega}(s),s \right) \dd s \right].
\]
\end{lemma}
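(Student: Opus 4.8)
The plan is to prove the two inequalities separately, exactly in the spirit of the classical dynamic programming argument, taking care of the fact that the trajectories are defined \emph{backward} in time and are constrained to $\RR_+$ through the variational inequality \eqref{controlBack}. Throughout, write $\costshift(x,t) = \inf_{\omega} \criter(x,\omega,t)$ and denote by $J_\tau(x,t;\omega) := \costshift\big(z^{x,t}_\omega(t-\tau),t-\tau\big) + \int_{t-\tau}^t \timeError\big(z^{x,t}_\omega(s),\omega(s),s\big)\,\dd s$ the right-hand side being minimised. The key structural fact I would isolate first is a \emph{concatenation/restriction property} of the backward trajectories: if $\omega \in L^2(0,t)$ and we set $\xi := z^{x,t}_\omega(t-\tau)$, then the restriction $\omega_{|[0,t-\tau]}$ generates, as a backward trajectory from $(\xi,t-\tau)$, exactly the curve $s \mapsto z^{x,t}_\omega(s)$ on $[0,t-\tau]$; conversely, any backward trajectory from $(\xi,t-\tau)$ down to time $0$ can be glued with $\omega_{|[t-\tau,t]}$ to yield a backward trajectory from $(x,t)$. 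This follows from uniqueness of the solution of the Skorokhod-type variational inequality \eqref{controlBack} given the datum $\omega$ and the terminal value, i.e. the same well-posedness invoked for \eqref{eq:dynsys} via \cite{brezis1973ope,pardoux2014sdes}; I would state it as a short preliminary observation.

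For the inequality $\costshift(x,t) \ge \inf_\omega J_\tau(x,t;\omega)$: take any $\omega \in L^2(0,t)$ and split the running cost additively as $\int_0^t \timeError = \int_0^{t-\tau}\timeError + \int_{t-\tau}^t \timeError$, so that $\criter(x,\omega,t) = \criter(\xi,\omega_{|[0,t-\tau]},t-\tau) + \int_{t-\tau}^t \timeError(z^{x,t}_\omega(s),\omega(s),s)\,\dd s$ with $\xi = z^{x,t}_\omega(t-\tau)$, using the restriction property. Bounding $\criter(\xi,\omega_{|[0,t-\tau]},t-\tau) \ge \costshift(\xi,t-\tau)$ gives $\criter(x,\omega,t) \ge J_\tau(x,t;\omega) \ge \inf_{\omega'} J_\tau(x,t;\omega')$, and taking the infimum over $\omega$ yields the claim. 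For the reverse inequality $\costshift(x,t) \le \inf_\omega J_\tau(x,t;\omega)$: fix $\omega$ on $[t-\tau,t]$, let $\xi = z^{x,t}_\omega(t-\tau)$, and for arbitrary $\delta>0$ pick a near-optimal control $\omega^\delta \in L^2(0,t-\tau)$ for the problem started at $(\xi,t-\tau)$, i.e. $\criter(\xi,\omega^\delta,t-\tau) \le \costshift(\xi,t-\tau) + \delta$. Concatenating $\omega^\delta$ on $[0,t-\tau]$ with $\omega$ on $[t-\tau,t]$ produces an admissible control $\bar\omega \in L^2(0,t)$ whose backward trajectory coincides with $z^{x,t}_\omega$ on $[t-\tau,t]$ (by the gluing property), hence $\costshift(x,t) \le \criter(x,\bar\omega,t) = \criter(\xi,\omega^\delta,t-\tau) + \int_{t-\tau}^t \timeError(z^{x,t}_\omega(s),\omega(s),s)\,\dd s \le J_\tau(x,t;\omega) + \delta$; let $\delta \to 0$ and take the infimum over the tail control $\omega$.

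The main obstacle I anticipate is \emph{not} the dynamic-programming bookkeeping but the careful justification of the concatenation/restriction property for the constrained backward dynamics: one must check that gluing two controls at time $t-\tau$ does not create spurious reflection at the junction and that the reflecting-boundary term $\Delta$ (or $k$) of the glued trajectory is precisely the concatenation of the two pieces' boundary terms. This is where the explicit Skorokhod formula and the continuity/uniqueness results quoted from \cite{pardoux2014sdes} (Theorems on $\mathcal{GS}^\varepsilon$) do the work; I would phrase the argument purely at the level of the deterministic variational inequality \eqref{controlBack}, noting that the map $\omega \mapsto z^{x,t}_\omega$ is well-defined and that its trace on any subinterval solves the corresponding variational inequality with the induced terminal/initial datum. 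A secondary, minor point is measurability/integrability of the glued control, which is immediate since $L^2(0,t-\tau)$ and $L^2(t-\tau,t)$ concatenate into $L^2(0,t)$. With the restriction property in hand, both inequalities are short, and the lemma follows.
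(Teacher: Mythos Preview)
Your proposal is correct and follows essentially the same route as the paper: both arguments prove the two inequalities separately via the classical dynamic programming split, using restriction of a control on $[0,t]$ to $[0,t-\tau]$ for one direction and concatenation of a tail control on $[t-\tau,t]$ with a near-optimal control on $[0,t-\tau]$ for the other. The paper's proof is terser (it only writes out the concatenation direction explicitly and dispatches the other in one sentence), whereas you spell out both inequalities and devote more care to the gluing/restriction property of the backward Skorokhod trajectories; that extra caution is harmless but not strictly needed here, since uniqueness of the backward variational inequality \eqref{controlBack} immediately gives that the restriction of $z^{x,t}_{\tilde\omega}$ to $[0,t-\tau]$ coincides with $z^{z^{x,t}_\omega(t-\tau),\,t-\tau}_{\omega'}$.
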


\begin{proof} Given another control $\left( {\omega}'(s) \right)_{0 \leq s \leq t-\tau}$, define the square-integrable control
\begin{equation*}  
\tilde{\omega}(s) = 
\begin{cases}
{\omega}'(s) &\text{  if  } 0 \leq s < t - \tau,\\
{\omega}(s) & \text{ if  } t - \tau \leq s \leq t. 
\end{cases}
\end{equation*}
For $s < t- \tau$ note that  $z_{\tilde{\omega}}^{x,t}(s) = z^{t-\tau,z^{x,t}_{\omega} ( t - \tau )}_{{\omega}'} (s )$, so that by definition of $\costshift$
\[ \costshift(x,t) \leq \psi \left( z^{t-\tau,z^{x,t}_{\omega} ( t - \tau )}_{{\omega}'} ( 0 ) \right) + \int_0^{t-\tau} \timeError \left( z^{t-\tau,z^{x,t}_{\omega} ( t - \tau )}_{{\omega}'} (s ) ,{\omega}'(s),s \right) \dd s + \int_{t-\tau}^t \timeError \left( z_{\omega}^{x,t}(s),{\omega}(s),s \right) \dd s, 
\]
and taking the infimum over ${\omega}'$ concludes. Equality is achieved by considering a sequence of controls whose costs converge towards the infimum.
\end{proof}

\begin{lemma}[Uniform terminal continuity] \label{UnifCont}
Consider a terminal point $(x,t)$ and $M > 0$; then $s \mapsto z^{x,t}_{\omega} ( s )$ is continuous at the terminal point $s = t$ uniformly in ${\omega}$ such that $\criter(x,\omega,t) \leq M$.
\end{lemma}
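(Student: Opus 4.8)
The plan is to exploit the reflective structure of the backward Skorokhod dynamics~\eqref{controlBack} together with the energy bound $\criter(x,\omega,t) \le M$, which controls the $L^2$ norm of the control $\omega$ on $[0,t]$. First I would recall that, since the primitive of $\omega$ is involved, the backward trajectory $z^{x,t}_\omega$ admits an explicit Skorokhod representation: writing $\Omega_\omega(s) := \int_s^t \omega(r)\,\dd r$ for the backward primitive, the solution satisfies $z^{x,t}_\omega(s) = x + \Omega_\omega(s) + \Delta_\omega(s)$, where $\Delta_\omega$ is the nondecreasing (in backward time) reflection term that activates only when $z^{x,t}_\omega = 0$, and which is given by the running-minimum formula $\Delta_\omega(s) = \max\bigl(0, -\min_{s \le r \le t}(x + \Omega_\omega(r))\bigr)$. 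From this representation, the key point is the classical Lipschitz stability of the Skorokhod map: the modulus of continuity of $z^{x,t}_\omega$ at $s=t$ is controlled by twice that of $s \mapsto x + \Omega_\omega(s)$ at $s = t$, that is, $|z^{x,t}_\omega(s) - x| \le 2 \sup_{s \le r \le t} |\Omega_\omega(r)| \le 2 \int_s^t |\omega(r)|\,\dd r$.

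Next I would convert the energy bound into a uniform control of $\int_s^t |\omega(r)|\,\dd r$. Since $\criter(x,\omega,t) = \psi(z^{x,t}_\omega(0)) + \int_0^t \ell(z^{x,t}_\omega(r),\omega(r),r)\,\dd r \le M$ with $\psi \ge 0$ and $\ell(z,\omega,r) = \tfrac12 \omega^2 + \tfrac12 h^2(z) - \dot y(r) h(z)$, and since $h$ is bounded (so that the terms not involving $\omega$ are bounded below on $[0,T]$ by some constant $-c_M$ depending only on $M$, $T$, $\|h\|_\infty$ and $\|\dot y\|_{L^2}$ via Cauchy--Schwarz), we get $\tfrac12 \|\omega\|_{L^2(0,t)}^2 \le M + c_M =: M'$. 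Then Cauchy--Schwarz yields, for any $s \in [0,t]$,
\[
\int_s^t |\omega(r)|\,\dd r \le |t-s|^{1/2}\, \|\omega\|_{L^2(0,t)} \le |t-s|^{1/2}\, \sqrt{2M'}.
\]
Combining with the Skorokhod estimate gives $|z^{x,t}_\omega(s) - x| \le 2\sqrt{2M'}\,|t-s|^{1/2}$ for all $\omega$ with $\criter(x,\omega,t) \le M$, which is precisely the claimed uniform continuity at the terminal point.

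The main obstacle I anticipate is purely bookkeeping rather than conceptual: one must make sure the lower bound $-c_M$ on the $\omega$-independent part of the running cost is genuinely uniform over all admissible $\omega$ — this is where boundedness of $h$ (hypothesis of this section) and square-integrability of $\dot y$ are used, and one should be slightly careful because $z^{x,t}_\omega$ itself depends on $\omega$, so the bound on $h^2(z^{x,t}_\omega)$ and on $\dot y \, h(z^{x,t}_\omega)$ must be via $\|h\|_\infty$ and not via any pointwise information on the trajectory. Once that uniform lower bound is in hand, extracting $\|\omega\|_{L^2}^2 \le 2M'$ is immediate, and the rest is the standard Skorokhod Lipschitz estimate applied on the shrinking interval $[s,t]$.
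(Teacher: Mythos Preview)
Your proposal is correct and, in fact, cleaner than the paper's own proof. The paper argues by a case distinction: when $x>0$ it defines a maximal time $\tau_\omega$ during which the backward trajectory stays positive and at distance $\le \varepsilon$ from $x$, uses $\dot z = \omega$ on that interval, and then bounds $\tau_\omega$ below via Cauchy--Schwarz; when $x=0$ it introduces two auxiliary times $\tau_\omega^0,\tau_\omega^1$ to handle the sticking and escape phases separately. You bypass all of this by invoking the Lipschitz stability of the Skorokhod map, obtaining the single explicit estimate $|z^{x,t}_\omega(s)-x|\le 2\int_s^t|\omega|$ valid for every $x\ge 0$, which immediately gives a uniform $|t-s|^{1/2}$ modulus.

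What each buys: the paper's argument is entirely self-contained and does not appeal to the Skorokhod representation or its contraction property, at the cost of a fiddly case analysis. Your route is shorter and yields a sharper, quantitative conclusion (an explicit H\"older modulus rather than merely a uniform lower bound on the time before leaving an $\varepsilon$-neighbourhood), but relies on knowing the running-minimum formula and the associated Lipschitz bound. Note also that you are more careful than the paper on one point: the paper writes $|x-z|\le\sqrt{2\tau\,\criter(x,\omega,t)}$ as if $\criter$ directly dominated $\tfrac12\|\omega\|_{L^2}^2$, whereas with the redefined cost rate $\ell(z,\omega,s)=\tfrac12\omega^2+\tfrac12h^2(z)-\dot y(s)h(z)$ the term $-\dot y\,h$ can be negative; your extraction of the lower bound $-c_M$ (which in fact depends only on $T$, $\|h\|_\infty$ and $\dot y$, not on $M$) is the correct fix.
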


\begin{proof} Consider $\varepsilon > 0$ and a control ${\omega}$. If $x > 0$, the continuity of $z^{x,t}_{\omega}$ at $t$ guarantees that
\[ 
	\tau_{\omega} := \sup \left\{ \tau > 0 \, , \, z^{x,t}_{\omega}(t-\tau) > 0 \text{ and }  | z^{x,t}_{\omega}(t-\tau) - x | \leq \varepsilon \right\} > 0.
\]
Considering $0 < \tau < \min(\tau_{\omega},1)$ to make sure that $z^{x,t}_{\omega}(s) > 0$, one has $\dot{z}^{x,t}_{\omega}(s) = {\omega}(s)$ for $t - \tau \leq s \leq t$ thanks to \ref{controlBack}. Thus 
\[ x - z^{x,t}_{\omega}(t-\tau) = \int_{t-\tau}^t \dot{z}^{x,t}_{\omega}(s) \dd s = \int_{t-\tau}^t {\omega}(s) \dd s. 
\]
Using Cauchy-Schwarz inequality
\[ 
	| x - z^{x,t}_{\omega}(t-\tau) | \leq \sqrt{2\tau \criter(x,\omega,t)} \leq \sqrt{2 \tau M}, 
\]
and this proves the bound $\tau_{\omega} \geq \left( \frac{\min(\varepsilon,|x|)}{\sqrt{2M}} \right)^2$, the right-hand side being independent of ${\omega}$. \\
In the case $x =0$, consider
\[ 
	\tau^0_{\omega} := \sup \left\{ \tau > 0 \, , \, \forall t-\tau \leq s \leq t, \, z^{x,t}_{\omega}(s) = 0 \right\},
\]
\[ 
	\tau^1_{\omega} := \sup \left\{ \tau > \tau^0_{\omega} \, , \, z^{x,t}_{\omega}(t-\tau) > 0 \text{ and } z^{x,t}_{\omega}(t-\tau) \leq \varepsilon \right\}.
\]
The continuity of $z^{x,t}_{\omega}$ indeed guarantees $\tau^0_{\omega} < \tau^1_{\omega}$; since for $\tau_{\omega}^0< \tau < \tau' < \tau^1_{\omega}$ 
\[ z^{x,t}_{\omega}(t-\tau) - z^{x,t}_{\omega}(t-\tau') = \int_{t-\tau'}^{t-\tau} \dot{z}^{x,t}_{\omega}(s) \dd s =\int_{t-\tau'}^{t-\tau} {\omega}(s) \dd s,
\]
the same reasoning as above gives a positive lower bound for $\tau_{\omega}^1 - \tau_{\omega}^0$ which is independent of ${\omega}$, completing the proof.
\end{proof}

The function $\costshift$ can now be identified to the previous limit using the HJB equation \eqref{eq:w_limit}. Note that the Hamiltonian $\hamilton$ can equivalently be defined as 

\begin{equation} \label{Hamiltonian}
\hamilton(x,t,\lambda) = \max_{{\omega}' \in \R}  \lambda {\omega}' - \timeError \left( x,{\omega}',t \right).
\end{equation}

\begin{proposition}[Sub-solution] \label{prop:sub}
The function $\costshift$ is a viscosity sub-solution of \eqref{eq:w_limit}. 
\end{proposition}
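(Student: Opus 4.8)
The plan is to run the dynamic programming argument of James and Baras~\cite{JB88}, adapted to the backward reflected dynamics~\eqref{controlBack} and to the boundary/initial test of Definition~\ref{def:visc_sol_HJ}. Fix $\phi \in C^1(\RR_+ \times \RR_+; \RR)$ such that $\costshift - \phi$ attains a local maximum at some $(x_0,t_0) \in \RR_+ \times \RR_+$; adding a constant to $\phi$ we may assume $\costshift(x_0,t_0) = \phi(x_0,t_0)$ and $\costshift \le \phi$ on a neighbourhood of $(x_0,t_0)$ (the continuity of $\costshift$ needed here follows from Lemma~\ref{UnifCont} and routine stability estimates for~\eqref{controlBack}). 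Only the easy half of Lemma~\ref{Optiprinciple} is needed: for any admissible control $\omega$ and any small $\tau > 0$,
\[
\costshift(x_0,t_0) \ \le\ \costshift( z_\omega^{x_0,t_0}(t_0-\tau),\, t_0-\tau ) + \int_{t_0-\tau}^{t_0} \timeError( z_\omega^{x_0,t_0}(s), \omega(s), s ) \, \dd s,
\]
so no optimal control needs to be exhibited. We then argue by cases on the position of $(x_0,t_0)$, exactly as in Definition~\ref{def:visc_sol_HJ}.

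\textbf{Interior point, $t_0 > 0$.} For $x_0 \in \RR_+^*$ take a \emph{constant} control $\omega \equiv \omega' \in \RR$. Because $z_{\omega'}^{x_0,t_0}$ is continuous and equals $x_0 > 0$ at $t_0$, it remains positive on $(t_0-\tau, t_0]$ for $\tau$ small, so~\eqref{controlBack} forces $\dot z = \omega'$ there and $z_{\omega'}^{x_0,t_0}(t_0-\tau) = x_0 - \tau \omega'$. Plugging this into the inequality above, replacing $\costshift$ by $\phi$ on the right-hand side, expanding $\phi$ to first order at $(x_0,t_0)$, using $\tfrac1\tau \int_{t_0-\tau}^{t_0} \timeError( z_{\omega'}^{x_0,t_0}(s), \omega', s )\, \dd s \to \timeError(x_0,\omega',t_0)$ as $\tau \downarrow 0$ (continuity of $h$, of $\dot y$ and of the trajectory), and letting $\tau \downarrow 0$, we obtain
\[
\partial_t \phi(x_0,t_0) \ \le\ \timeError(x_0,\omega',t_0) - \omega' \, \partial_x \phi(x_0,t_0).
\]
This holds for every $\omega' \in \RR$; taking the infimum over $\omega'$ and recognising the variational formula~\eqref{Hamiltonian} gives $\partial_t \phi(x_0,t_0) + \hamilton(x_0,t_0,\partial_x \phi(x_0,t_0)) \le 0$.

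\textbf{Boundary point, $x_0 = 0$, $t_0 > 0$.} Here $B(0,t,\lambda) = -\lambda$, so we must check $\min\{ -\partial_x \phi(0,t_0),\ (\partial_t \phi + \hamilton)(0,t_0) \} \le 0$. If $\partial_x \phi(0,t_0) \ge 0$ the first term is $\le 0$ and there is nothing to prove. If $\partial_x \phi(0,t_0) < 0$, take a constant control $\omega' < 0$: reading~\eqref{controlBack} at $z = 0$ shows that, after time reversal, $z_{\omega'}^{0,t_0}$ is the Skorokhod reflection of $s \mapsto -\omega' s$, hence it \emph{enters} the interior, $z_{\omega'}^{0,t_0}(t_0-\tau) = -\tau \omega' > 0$, with $\dot z = \omega'$ on $(t_0-\tau,t_0)$ for $\tau$ small. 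The interior computation then applies verbatim and yields, for every $\omega' < 0$,
\[
\partial_t \phi(0,t_0) \ \le\ \timeError(0,\omega',t_0) - \omega' \, \partial_x \phi(0,t_0).
\]
The right-hand side is a parabola in $\omega'$ minimised at $\omega' = \partial_x \phi(0,t_0)$, which is negative, hence \emph{inside} the admissible range $\{\omega' < 0\}$; so minimising over $\omega' < 0$ recovers the full Hamiltonian value $-\hamilton(0,t_0,\partial_x \phi(0,t_0))$, and we conclude $(\partial_t \phi + \hamilton)(0,t_0) \le 0$. (This is exactly where $f(0)=0$ enters: it places the maximiser in~\eqref{Hamiltonian} at $\omega' = \partial_x \phi(0,t_0)$, so a negative test-function slope forces the optimal direction into $\RR_+^*$.) Finally, when $t_0 = 0$ -- whether or not $x_0 = 0$ -- the relevant minimum in Definition~\ref{def:visc_sol_HJ} involves $\costshift(x_0,0) - w_0(x_0)$, which vanishes since $\costshift(\cdot,0) = \psi$ is the initial datum $w_0$ of~\eqref{eq:w_limit}; the inequality is then immediate.

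\textbf{Expected main obstacle.} The delicate point is the boundary case. One must carefully justify that, when $\partial_x \phi(0,t_0) < 0$, the admissible constant controls produce backward trajectories that genuinely leave the boundary -- so that~\eqref{controlBack} degenerates to the smooth ODE $\dot z = \omega'$ and the first-order expansion is valid -- and that restricting the control range to $\{\omega' < 0\}$ does not pull the infimum below $-\hamilton$. This is where the convex structure of $\timeError$ and of $\hamilton$, the hypothesis $f(0) = 0$, and Lemma~\ref{UnifCont} (to control the running-cost integral and the trajectory position uniformly as $\tau \downarrow 0$) are all used. The interior and initial-time cases are routine.
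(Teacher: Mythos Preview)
Your argument is correct and follows the same dynamic-programming strategy as the paper's proof: combine the sub-optimality inequality from Lemma~\ref{Optiprinciple} with the local-maximum property of $\costshift-\phi$, divide by $\tau$, and let $\tau\downarrow 0$. The two proofs differ only in how the boundary case $x_0=0$ is handled. The paper keeps an arbitrary control $\omega$, uses the variational inequality at $z=0$ to get $\omega(t)-\dot z^{x,t}_\omega(t)\ge 0$, and multiplies by $\partial_x\phi(0,t)\le 0$ so that the correction term has the right sign for \emph{every} $\omega$; the supremum over all $\omega'$ then immediately gives $\hamilton$. You instead restrict to constant controls $\omega'<0$, observe that the backward trajectory leaves the boundary so the interior computation applies verbatim, and then note that the maximiser $\omega'=\partial_x\phi(0,t_0)$ of $\lambda\omega'-\timeError$ lies in the restricted range $\{\omega'<0\}$ (this is the one place where $f(0)=0$ is genuinely used). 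Both routes are valid; yours avoids evaluating $\dot z$ at the terminal time for a general $L^2$ control, at the price of a small convexity argument. Your explicit treatment of $t_0=0$ via $\costshift(\cdot,0)=\psi=w_0$ is a harmless addition that the paper leaves implicit.
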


\begin{proof} For $x \geq 0$ and $t >0$, consider a $C^1$ test function $\phi$ such that $\costshift-\phi$ has a local maximum at point $(x,t)$. For any control ${\omega}$ and every $\tau > 0$ small enough, this leads to 
\[ \costshift\left( z^{x,t}_{\omega} ( t - \tau ) , t-\tau \right) - \phi \left( z^{x,t}_{\omega} ( t - \tau ) , t-\tau \right) \leq \costshift(x,t) - \phi(x,t),
\]
because of $z^{x,t}_{\omega}(t) = x$ and the continuity of $z^{x,t}_{\omega}$ at $(x,t)$. Therefore, we have 
\[
	\phi(x,t) - \phi \left(  z^{x,t}_{\omega} ( t - \tau ) , t-\tau \right) \leq \costshift(x,t) - \costshift \left( z^{x,t}_{\omega} ( t - \tau ) , t-\tau \right) \leq  \int_{t-\tau}^t \timeError \left( z_{\omega}^{x,t}(s),{\omega}(s),s \right) \dd s,
\]
using the principle of optimality \ref{Optiprinciple}. Dividing by $\tau$ and taking the $\tau \rightarrow 0^+$ limit gives 
\[ \left.\frac{\dd}{\dd s}\right|_{s=t} \phi \left( z^{x,t}_{\omega} ( s ),s \right) \leq \timeError \left( z_{\omega}^{x,t}(t),{\omega}(t),t\right),
\]
so that
\[ 
	\partial_t \phi(x,t) + \dot{z}^{x,t}_{\omega} ( t ) \partial_x \phi(x,t) - \timeError \left( x ,{\omega}(t),t \right) \leq 0.
\]
Then, we have
\[ \partial_t \phi(x,t) + {\omega}(t) \partial_x \phi(x,t) - \timeError \left( x ,{\omega}(t),t\right) \leq \partial_x \phi(x,t) \left[ {\omega}(t) - \dot{z}^{x,t}_{\omega} ( t ) \right].
\]
If $x > 0$ then $\dot{z}^{x,t}_{\omega} ( t ) = {\omega}(t)$ according to \eqref{controlBack}; 
else $x =0$ so that ${\omega}(t) - \dot{z}^{x,t}_{\omega} ( t ) \geq 0$, and one can assume $\partial_x \phi(0,t) \leq 0$ following the definition \eqref{def:visc_sol_HJ}. In every case
\[  
	\partial_x \phi(x,t) \left[ {\omega}(t) - \dot{z}^{x,t}_{\omega} ( t ) \right] \leq 0. 
\]
Since this is true for every ${\omega}$, taking the maximum over ${\omega}(t)$ allows to recover \eqref{Hamiltonian} and
\[ 
	\partial_t \phi(x,t) + \hamilton \left( x ,t,\partial_x \phi(x,t) \right) \leq 0,
\]
as desired.
\end{proof}

\begin{proposition}[Super-solution] \label{prop:sur}
The function $\costshift$ is a viscosity super-solution of \eqref{eq:w_limit}.
\end{proposition}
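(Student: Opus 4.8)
The plan is to adapt the argument of Proposition~\ref{prop:sub}, but to run the dynamic programming principle (Lemma~\ref{Optiprinciple}) in the direction that supplies \emph{near-optimal} controls, while keeping careful track of the sign of the reflection term at the boundary $x=0$. Fix a test function $\phi \in C^2$ and a local minimum point $(x_0,t_0)$ of $\costshift - \phi$. The cases $t_0 = 0$ are immediate, since $\costshift(\cdot,0) = \psi$ coincides with the initial datum $w_0$ of~\eqref{eq:w_limit}, so the term $\costshift(x_0,0) - w_0(x_0)$ appearing in the relevant maximum vanishes; and if $x_0 = 0$ with $-\partial_x\phi(0,t_0) \geq 0$, the Neumann alternative $B(0,t_0,\partial_x\phi(0,t_0)) = -\partial_x\phi(0,t_0)$ of the required maximum is already nonnegative. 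Hence it remains to treat $t_0 > 0$ with either $x_0 > 0$, or $x_0 = 0$ and $\partial_x\phi(0,t_0) > 0$, and in both cases it suffices to establish the interior inequality $\partial_t\phi(x_0,t_0) + \hamilton(x_0,t_0,\partial_x\phi(x_0,t_0)) \geq 0$.

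For each $\tau \in (0,t_0)$, I would invoke Lemma~\ref{Optiprinciple} to pick $\omega^\tau \in L^2(t_0-\tau,t_0)$ that is $\tau^2$-optimal in the principle of optimality, and write $z^\tau := z^{x_0,t_0}_{\omega^\tau}$; completing $\omega^\tau$ by (say) $0$ on $[0,t_0-\tau]$ shows that $\criter(x_0,\omega^\tau,t_0)$ is bounded uniformly in $\tau$ (using also $\costshift(x_0,t_0) < \infty$, which follows from the constant trajectory $z\equiv x_0$ obtained with $\omega\equiv 0$). Lemma~\ref{UnifCont} then gives $z^\tau(s) \to x_0$ as $s \to t_0^-$ \emph{uniformly in $\tau$}, so that for $\tau$ small the entire arc $s\mapsto z^\tau(s)$, $s\in[t_0-\tau,t_0]$, lies in the neighbourhood of $(x_0,t_0)$ where $\costshift - \phi$ is minimal. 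Combining this minimality with $\tau^2$-optimality yields
\[
\phi(x_0,t_0) - \phi\bigl(z^\tau(t_0-\tau),t_0-\tau\bigr)
\;\geq\; \costshift(x_0,t_0) - \costshift\bigl(z^\tau(t_0-\tau),t_0-\tau\bigr)
\;\geq\; \int_{t_0-\tau}^{t_0} \timeError\bigl(z^\tau(s),\omega^\tau(s),s\bigr)\,\dd s - \tau^2 .
\]
Rewriting the left-hand side as $\int_{t_0-\tau}^{t_0}\bigl[\partial_t\phi + \dot z^\tau\,\partial_x\phi\bigr](z^\tau(s),s)\,\dd s$ by the fundamental theorem of calculus, splitting $\dot z^\tau\partial_x\phi = \omega^\tau\partial_x\phi + (\dot z^\tau - \omega^\tau)\partial_x\phi$, and using $\omega^\tau\partial_x\phi - \timeError \leq \hamilton(\cdot,\cdot,\partial_x\phi)$ from~\eqref{Hamiltonian}, one obtains
\[
\int_{t_0-\tau}^{t_0}\bigl[\partial_t\phi + \hamilton\bigr]\bigl(z^\tau(s),s,\partial_x\phi(z^\tau(s),s)\bigr)\,\dd s
\;+\; \int_{t_0-\tau}^{t_0}(\dot z^\tau(s)-\omega^\tau(s))\,\partial_x\phi(z^\tau(s),s)\,\dd s
\;\geq\; -\tau^2 .
\]

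The crucial step is then to discard the reflection integral. From~\eqref{controlBack}, $\dot z^\tau = \omega^\tau$ a.e.\ on $\{z^\tau>0\}$ while $\dot z^\tau \leq \omega^\tau$ a.e.\ on $\{z^\tau = 0\}$, so $\dot z^\tau - \omega^\tau \leq 0$ everywhere. If $x_0 > 0$, Lemma~\ref{UnifCont} forces $z^\tau > 0$ on $[t_0-\tau,t_0]$ for $\tau$ small, so the reflection integral vanishes; if $x_0 = 0$ with $\partial_x\phi(0,t_0) > 0$, then by continuity of $\partial_x\phi$ together with $z^\tau(s)\to 0$ the integrand $(\dot z^\tau-\omega^\tau)\partial_x\phi$ is $\leq 0$ on $[t_0-\tau,t_0]$ for $\tau$ small, so the reflection integral is $\leq 0$. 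In either case it may be dropped while preserving the inequality, and dividing by $\tau$ and sending $\tau \to 0^+$ — using the continuity of $\partial_t\phi + \hamilton$ and the uniform convergence $z^\tau(s)\to x_0$ — gives $\partial_t\phi(x_0,t_0) + \hamilton(x_0,t_0,\partial_x\phi(x_0,t_0)) \geq 0$, as required. The main obstacle is precisely this boundary bookkeeping: one must verify that the upward-push term is favourably signed exactly in the regime $\partial_x\phi(0,t_0) > 0$, i.e.\ in the situation where the Neumann alternative of the super-solution test is unavailable, and that the near-optimal arcs $z^\tau$ stay close to the terminal point uniformly in $\tau$, which is where the uniform terminal continuity of Lemma~\ref{UnifCont} is indispensable.
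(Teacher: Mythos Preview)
Your proof is correct and follows essentially the same route as the paper's: pick near-optimal controls in the principle of optimality (Lemma~\ref{Optiprinciple}), use Lemma~\ref{UnifCont} to keep the near-optimal arcs close to the terminal point, expand $\phi$ along the arc, bound the running term by the Hamiltonian via~\eqref{Hamiltonian}, and discard the reflection integral using the sign $\dot z - \omega \leq 0$ from~\eqref{controlBack} together with $\partial_x\phi(0,t_0) \geq 0$. The only cosmetic differences are that the paper takes $\varepsilon\tau_n$-optimal controls (and sends $\varepsilon\to 0$ at the end) rather than $\tau^2$-optimal ones, and that it obtains the uniform bound $\criter(x,\omega_n,t)\leq M$ directly by choosing $\omega_n$ near-optimal for the full cost $\costshift(x,t)$ rather than by extending by $0$ on $[0,t_0-\tau]$; both devices serve the same purpose and your variant is equally valid.
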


\begin{proof}
For $x \geq 0$ and $t >0$,  consider a $C^1$ test function $\phi$ such that $\costshift- \phi$ has a local minimum at point $(x,t)$. Positive numbers $\delta,\delta'>0$ exist such that 
\begin{equation} \label{MinVisc}
 |t-t'| \leq \delta \text{ and } |x-x'| \leq h \Rightarrow \costshift(x',t') - \phi(t',x') \geq \costshift \left( x,t \right) - \phi \left( x,t \right).
\end{equation}    
Fix now $\varepsilon > 0$ and $M > \costshift(x,t)$. By lemma \ref{UnifCont} $\delta' > 0$ exists such that for every ${\omega}$ with $\criter(x,\omega,t) \leq M$ 
\[  
	0 \leq \tau \leq \delta' \Rightarrow | z_{\omega}^{x,t}(t-\tau) - x | \leq h.
\]
Consider a sequence $(\tau_n)_{n \geq 0}$ which converges to $0$ with $0 < \tau_n \leq \min ( \delta , \delta') $. In the principle of optimality \ref{Optiprinciple} which characterises $\costshift(x,t)$, it is sufficient to minimize over $\omega$ with $\criter(x,\omega,t) \leq M$, because $M > \costshift(x,t)$. Then by definition of the infimum, ${\omega}_n$ with $\criter(x,\omega_n,t) \leq M$ exists for every $n$, satisfying
\[ 
	\costshift(x,t) +  \varepsilon \tau_n \geq \costshift \left( z^{x,t}_{{\omega}_n} (t-\tau_n), t-\tau_n \right) + \int_{t-\tau_n}^t \timeError \left( z_{{\omega}_n}^{x,t}(s),{\omega}_n(s),s \right) \dd s.
\]
Using \ref{MinVisc}, it follows
\begin{align*} 
\phi \left( x,t \right) - \phi \left( z_{{\omega}_n}^{x,t}(t-\tau_n),t-\tau_n \right) &\geq \costshift \left( x,t \right) - \costshift \left( z_{{\omega}_n}^{x,t}(t-\tau_n),t-\tau_n \right) \\
&\geq - \varepsilon \tau_n + \int_{t-\tau_n}^t \timeError \left( z_{{\omega}_n}^{x,t}(s),{\omega}_n(s),s \right) \dd s. 
\end{align*}
The functions $\phi$ and $z_{{\omega}_n}^{x,t}$ being differentiable, taking the $s$-derivative in $\phi \left( z^{x,t}_{\omega} ( s ) , s \right)$ leads
\[ 
	\phi \left( x,t \right) - \phi \left( z_{{\omega}_n}^{x,t}(t-\tau_n) ,t-\tau_n \right) = \int_{t-\tau_n}^t \partial_t \phi \left( z^{x,t}_{{\omega}_n} (s),s \right) + \dot{z}^{x,t}_{{\omega}_n} (s) \partial_x \phi \left(z^{x,t}_{{\omega}_n} (s),s \right) \dd s.  
\]
Therefore, we have
\[ \int_{t-\tau_n}^t \partial_t \phi \left( z^{x,t}_{{\omega}_n} (s),s \right) + \dot{z}^{x,t}_{{\omega}_n} (s) \partial_x \phi \left( z^{x,t}_{{\omega}_n} (s),s \right) - \timeError \left( z_{{\omega}_n}^{x,t}(s),{\omega}_n(s),s \right) \dd s \geq - \varepsilon \tau_n. 
\]
Adding $\int_{t-\tau_n}^t \partial_x \phi \left( z^{x,t}_{{\omega}_n} (s) ,s\right) {\omega}_n (s) \dd s$ to each side,
\begin{align*} 
\int_{t-\tau_n}^t \partial_t \phi \left( z^{x,t}_{{\omega}_n} (s) ,s\right) &+ \partial_x \phi \left( z^{x,t}_{{\omega}_n} (s),s \right) {\omega}_n(s) - \timeError \left( z_{{\omega}_n}^{x,t}(s),{\omega}_n(s),s \right) \dd s \\
&\geq - \varepsilon \tau_n + \int_{t-\tau_n}^t \partial_x \phi \left( z^{x,t}_{{\omega}_n} (s),s \right) \left[ {\omega}_n (s) - \dot{z}^{x,t}_{{\omega}_n} (s) \right] \dd s. 
\end{align*}
Note now that 
\begin{align*} 
\hamilton \left( z^{x,t}_{{\omega}_n} (s),s, \partial_x \phi \left( z^{x,t}_{{\omega}_n} (s) ,s\right) \right) &= \max_{{\omega}' \in \R} \partial_x \phi \left( z^{x,t}_{{\omega}_n} (s),s \right) {\omega}'- \timeError \left( z_{{\omega}_n}^{x,t}(s),{\omega}',s \right) \\
&\geq \partial_x \phi \left( z^{x,t}_{{\omega}_n} (s),s \right) {\omega}_n(s) - \timeError \left( z_{{\omega}_n}^{x,t}(s),{\omega}_n(s),s \right).
\end{align*} 
Moreover if $x > 0$, the uniform convergence of lemma \ref{UnifCont} allows to take $n$ large enough so that $z_{{\omega}_n}^{x,t}(s) > 0$ for $t - \tau_n \leq s \leq t$ and thus $\dot{z}^{x,t}_{{\omega}_n} (s) = {\omega}_n (s)$. If $x = 0$ one can assume $\partial_x \phi \left( 0, t \right) \geq 0$, and use the fact that $\omega_n(s) - \dot{z}^{x,t}_{{\omega}_n} (s) \geq 0$ by \ref{controlBack}, with equality when $z^{x,t}_{{\omega}_n} (s) > 0$. In every case 
\[ 
	\int_{t-\tau_n}^t \partial_x \phi \left( z^{x,t}_{{\omega}_n} (s),s \right) \left[ {\omega}_n (s) - \dot{z}^{x,t}_{{\omega}_n} (s) \right] \dd s  \geq 0, 
\]
for $n$ large enough. Thus
\[ 
	\int_{t-\tau_n}^t \partial_t \phi \left( z^{x,t}_{{\omega}_n} (s),s \right) + \hamilton \left( z^{x,t}_{{\omega}_n} (s),s, \partial_x \phi \left( z^{x,t}_{{\omega}_n} (s),s \right) \right) \dd s \geq - \varepsilon \tau_n.
\]
Lemma \ref{UnifCont} guarantees the continuity of $s \mapsto z^{x,t}_{{\omega}_n} (s)$ at $s = t$ uniformly in ${\omega}_n$ such that $W_{{\omega}_n} (t,s) \leq M$, so that dividing by $\tau_n$ and taking the $n \rightarrow + \infty$ limit gives
\[ \partial_t \phi \left( t, x \right) + \hamilton \left( x,s, \partial_x \phi \left( x,t) \right) \right) \geq - \varepsilon.
\]
Since this hold for every $\varepsilon >0$, this concludes the proof.
\end{proof}

\begin{theorem}[Identification]
Using the uniqueness result \ref{thm:uniqueness_eps}, it is now possible to identify the solution $w$ of \eqref{eq:w_limit} to $\costshift$, provided that the initial condition is $\psi(x) = w_0(x)$.
\end{theorem}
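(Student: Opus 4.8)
The plan is to recognise $\costshift$ as a bounded viscosity solution of the limit equation~\eqref{eq:w_limit} carrying the initial datum $w_0$, and then to invoke the comparison statement of Theorem~\ref{thm:existence_and_uniqueness} to conclude that $\costshift$ must coincide with the solution $w$ furnished by Theorem~\ref{thm:existence_and_uniqueness_HJ_w}. Propositions~\ref{prop:sub} and~\ref{prop:sur} already deliver the sub- and super-solution properties, both in the interior $\RR_+^* \times \RR_+^*$ and at the Neumann boundary $\{0\} \times \RR_+^*$, in the sense of Definition~\ref{def:visc_sol_HJ}. Hence what remains is (a) boundedness of $\costshift$, (b) the correct initial trace $\costshift(\cdot,0) = w_0$ together with the behaviour as $t \downarrow 0$, and (c) enough regularity to legitimately apply the comparison result.

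First I would bound $\costshift$ directly from its variational definition. Testing with the null control $\omega \equiv 0$ forces, by~\eqref{controlBack}, the backward trajectory to remain at $x$, so $\costshift(x,t) \le \psi(x) + \int_0^t \big( \tfrac12 h^2(x) - \dot y(s) h(x) \big)\,\dd s$, which is bounded above since $\psi = w_0$, $h$ and $\dot y = y'$ are bounded; for the lower bound, completing the square in the running cost gives $\timeError(z,\omega,s) \ge -\tfrac12 \| \dot y \|_{L^\infty}^2$, and $\psi = w_0$ is bounded below, so $\costshift$ is bounded on $\RR_+ \times [0,T]$. Next, for the initial condition, $\costshift(x,0) = \psi(x) = w_0(x)$ holds by construction, and Lemma~\ref{UnifCont} shows that for small $t$ all cost-bounded trajectories remain in a shrinking neighbourhood of $x$, which, together with the continuity of $w_0$, $h$ and $\dot y$, yields $\costshift(x,t) \to w_0(x)$ as $t \downarrow 0$, locally uniformly in $x$; in particular the upper and lower semicontinuous envelopes of $\costshift$ both equal $w_0$ on $\{t = 0\}$.

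With these in hand I would conclude as follows. The limiting Hamiltonian $\hamilton$ of Proposition~\ref{prop:H_eps_CV_H_1d} is locally Lipschitz and, being quadratic in $\lambda$, convex and coercive in that variable, while the boundary operator $B(x,t,\lambda) = -\lambda + y(t) h'(0)$ satisfies the strict monotonicity in the outward normal direction required by Theorem~\ref{thm:existence_and_uniqueness}. Applying its comparison statement to $\costshift^*$, a bounded u.s.c. subsolution, and $\costshift_*$, a bounded l.s.c. supersolution, both coinciding with $w_0$ at $t=0$, gives $\costshift^* \le \costshift_*$ on $\RR_+ \times [0,T]$; since always $\costshift_* \le \costshift \le \costshift^*$, the three functions agree, so $\costshift$ is continuous and is the unique viscosity solution of~\eqref{eq:w_limit}, which by Theorem~\ref{thm:existence_and_uniqueness_HJ_w} is $w$. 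Thus $\costshift = w$ as soon as $\psi = w_0$.

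The step I expect to be the main obstacle is (c): passing from the pointwise sub-/super-solution inequalities of Propositions~\ref{prop:sub}--\ref{prop:sur} to a form to which the comparison principle genuinely applies. If $\costshift$ is not known a priori to be continuous, one has to work with its semicontinuous envelopes, and the delicate point is the initial layer $\costshift^*(\cdot,0) \le w_0 \le \costshift_*(\cdot,0)$; this is exactly where the uniform terminal continuity of Lemma~\ref{UnifCont} is needed, and where the absence of time-reversibility of the constrained backward dynamics~\eqref{controlBack} must be handled with care at the boundary point $x=0$, since there the trajectory may remain pinned at $0$ over a time interval of positive length before the control moves it.
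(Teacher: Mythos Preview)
Your approach is essentially the same as the paper's: combine Propositions~\ref{prop:sub} and~\ref{prop:sur} with the comparison/uniqueness result to identify $\costshift$ with $w$. In fact the paper gives no proof beyond the theorem statement itself; the identification is asserted as an immediate consequence of the two preceding propositions and uniqueness. Your version is strictly more complete, supplying the boundedness of $\costshift$, the initial-layer behaviour via Lemma~\ref{UnifCont}, and the semicontinuous-envelope argument that the paper leaves implicit. Note also that the paper's reference to Theorem~\ref{thm:uniqueness_eps} appears to be a typo for Theorem~\ref{thm:existence_and_uniqueness} (or equivalently Theorem~\ref{thm:existence_and_uniqueness_HJ_w}); you correctly invoke the latter.
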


\noindent This establishes the desired link between the stochastic filtering problem \eqref{svi} and the control problem  \eqref{controlBack}. In particular, the limit doesn't allow to compute a recursive estimator, because it stems from a control problem and not a filtering one. The estimation has thus to be done by keeping some approximating noise with (small) amplitude $\varepsilon >0$, or using the penalised dynamics.

\subsection{Lost equivalence with the Mortensen estimator}

Let's go back to the estimation problem of the constrained dynamics \eqref{eq:dynsys} with $f=0$, namely the Skorokhod problem:
\begin{equation}\label{eq:dynsysf0}
\begin{cases}
 \forall t \mbox{ a.e.} \in [0,T], \: \forall z \geq 0, \: (\omega(t) - \dot x(t))(z- x(t)) \leq 0\\ 
\state(0) = \initNoise.
\end{cases}
\end{equation}

As in Section \ref{sec:penal-moretensen}, it could be tempting to use a direct deterministic filtering approach base on the cost to come
\begin{equation*}
\costcome(x,t) := \inf_{(\initNoise, \modelNoise) \in \mathcal{A}_{x, t}} \left [ \initError(\initNoise) + \int_0^t \timeError(\state_{|\initNoise,\modelNoise}(s),\modelNoise(s),s)\, \diff s \right ],
\end{equation*}
where we omit $\dot{y}$ to simplify the notation and the pre-image set can be also defined by
\[
\mathcal{A}_{x, t} := \left\{\left(\initNoise, \modelNoise\right) \in \R^+ \times \Ltwo[0,t]  : \state_{|\initNoise,\modelNoise} \text{ follows } \eqref{eq:dynsysf0} \text{ with } \state_{|\initNoise,\modelNoise}(0)=\initNoise, \state_{|\initNoise,\modelNoise}(t) = \state \right\}.
\]
This admissible set is never empty, because it is always possible to reach every $x \geq 0$ at time $t$ starting from any positive $\initNoise > 0$ by considering a (slow enough) straight line without reflection. However, the dynamics \eqref{eq:dynsys} is now well-posed in forward time only: given a value $x$ at time $t>0$ and a control $\omega$, there's no more well-posedness for the backward in time problem starting from $x$ at time $t$. This feature is due to the non-reversibility introduced by the reflection and complicates the situation a lot, because $\costcome(x,t)$ can no more be easily computed. In particular, it is no more the solution of the expected HJB equation \eqref{eq:w_limit}. Indeed, let's try to show --~as done for $\mathcal{W}$ in Proposition \ref{prop:sub}~-- that $\costcome$ is a viscosity sub-solution of equation \eqref{eq:w_limit}. First of all, one could prove the analogous of Theorem \ref{PrincipleBell}, which would read here:
\begin{equation}\label{eq:OptiprincipleConstraint}
\costcome(x,t) = \inf_{(\zeta,\omega) \in \mathcal{A}_{x, t}} \left[ \costcome \left( \state_{|\initNoise,\modelNoise} ( t - \tau ) , t-\tau \right) + \int_{t-\tau}^t \timeError \left( \state_{|\initNoise,\modelNoise} (s ),{\omega}(s),s \right) \dd s \right],	
\end{equation}

\noindent Let's now mimic the proof of Proposition \ref{prop:sub}: for $x \geq 0$ and $t >0$, consider a $C^1$ test function $\phi$ such that $\costcome- \phi$ has a local maximum at point $(x,t)$. For any control ${\omega}$, any initial condition $\initNoise$ and every $\tau > 0$ small enough, this leads to 
\[ \costcome \left( \state_{|\initNoise,\modelNoise} ( t - \tau ) , t-\tau \right) - \phi \left( \state_{|\initNoise,\modelNoise} ( t - \tau ) , t-\tau \right) \leq \costcome(x,t) - \phi(x,t), 
\]
because of $\state_{|\initNoise,\modelNoise}(t) = x$ and the continuity of $\state_{|\initNoise,\modelNoise}$ at $(x,t)$. We have 
\[\phi(x,t) - \phi \left(  \state_{|\initNoise,\modelNoise} ( t - \tau ) , t-\tau \right) \leq \costcome(x,t) - \costcome\left( \state_{|\initNoise,\modelNoise} ( t - \tau ), t-\tau \right) \leq  \int_{t-\tau}^t \timeError \left( \state_{|\initNoise,\modelNoise}(s),{\omega}(s),s \right) \dd s, 
\]
using the principle of optimality given by \eqref{eq:OptiprincipleConstraint}. Dividing by $\tau$ and taking the $\tau \rightarrow 0^+$ limit gives
\[ \left.\frac{\dd}{\dd s}\right|_{s=t} \phi \left( \state_{|\initNoise,\modelNoise} ( s ),s \right) \leq \timeError \left( \state_{|\initNoise,\modelNoise}(t),{\omega}(t),t\right), 
\]
so that
\[ \partial_t \phi(x,t) + \dot{\state}_{|\initNoise,\modelNoise} ( t ) \partial_x \phi(x,t) - \timeError \left( x ,{\omega}(t),t \right) \leq 0.
\]
Then
\begin{equation}\label{eq:wrong-bnoundary}
\partial_t \phi(x,t) + {\omega}(t) \partial_x \phi(x,t) - \timeError \left( x ,{\omega}(t),t\right) \leq \partial_x \phi(x,t) \left[ {\omega}(t) - 
\dot{\state}_{|\initNoise,\modelNoise}( t ) \right]. 
\end{equation}
If $x > 0$, then $\dot{\state}_{|\initNoise,\modelNoise} ( t ) = \omega(t)$ according to \eqref{eq:dynsysf0}. Hence
\[ 
\partial_t \phi(x,t) + \hamilton \left( x ,t,\partial_x \phi(x,t) \right) =0 \leq 0, 
\]
as desired. However if $x =0$ then ${\omega}(t) - \dot{\state}_{\penal|\initNoise,\modelNoise}( t ) \leq 0$ by definition of the sub-differential dynamics \eqref{eq:dynsysf0}. Considering $\phi$ such that  $\partial_x \phi(0,t) \leq 0$, we get
\[  
\partial_x \phi(x,t) \left[ {\omega}(t) - \dot{\state}_{|\initNoise,\modelNoise}( t ) \right] \geq 0,
\]
which, when combined to \eqref{eq:wrong-bnoundary} does not allow to constrain $\partial_t \phi(x,t) + {\omega}(t) \partial_x \phi(x,t) - \timeError \left( x ,{\omega}(t),t\right)$ to be non-positive.  The boundary condition  appears to be
\begin{equation}\label{eq:bound-wrong-sign}
    \min \left\{  + \partial_x \phi (0,t_0) \; , \;  (\partial_t \phi + H(\cdot, u, \partial_x \phi))(0,t_0) \right\} \leq 0.
\end{equation}
A similar situation would arise if one tried to prove the super-solution property for $\costcome$ (the analog of Proposition \ref{prop:sur}). We therefore believe that the connection between the viscosity limit of stochastic filtering and deterministic filtering for dynamics nonreversible in time is broken, and $\costcome$ cannot be computed from a forward dynamics that appears --~from \eqref{eq:bound-wrong-sign}~-- to be an ill-posed HJB dynamics.

As a result, a recursive estimator of \eqref{eq:dynsysf0} --~and similarly for \eqref{eq:dynsys}~-- cannot be the Mortensen estimator computed from $\costcome$ which does not appear to follow a well-posed Hamilton-Jacobi-Bellman equation. As a consequence, to obtain a computable sequential estimator, one must choose between two alternatives: 
\begin{itemize}
\item[-] approximate the dynamics \eqref{eq:dynsys} with the penalised dynamics \eqref{eq:pen_det}, resulting in an approximate Moretensen estimator; 
\item[-] define the stochastic filtering problem in terms of \eqref{svi} and use the tools of stochastic filtering and particle filtering \cite{Chen:2003up} for a small  but nonzero value of $\varepsilon$.
\end{itemize}

\section{Appendix} \label{sec:Appendix}

\subsection{Derivation of the robust Zakai equation} \label{zakai}

\begin{lemma}[Robust Zakai equation]
The random function $p^{\varepsilon}$ satisfies the robust Zakai equation, adding some Robin boundary conditions:
\begin{equation} \label{eq:RobZakBound}
\begin{cases}
\partial_t p^{\varepsilon} (t,\cdot) -y(t) h'(x) \partial_x p^{\varepsilon} (t,\cdot) + \frac{1}{\varepsilon} \potential^{\varepsilon}(x,t) p^{\varepsilon} (t,\cdot) = \dfrac{\varepsilon}{2} \partial^2_{xx} p^{\varepsilon}(t,\cdot),& (x,t)\in\RR^+ \times \RR^+\\ 
\dfrac{\varepsilon}{2} \partial_{x} p^{\varepsilon} (0,t) + \dfrac{Y_t h'(x)}{2} p^{\varepsilon} (0,t) = 0, &t\in\RR^+, 
\end{cases}
\end{equation}
where
\[ \potential^{\varepsilon}(x,t) := \frac{h^2(x)}{2} - \frac{\varepsilon}{2} Y_t h''(x) - \frac{1}{2} Y^2_t ( h'(x))^2. \]
\end{lemma}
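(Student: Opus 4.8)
The plan is to start from the Zakai equation~\eqref{ZakaiBiundary} for $q^\varepsilon$ -- recall that here $f\equiv 0$, so the generator reduces to $A_\varepsilon=\tfrac\varepsilon2\partial_{xx}^2$ -- together with its Neumann condition $\partial_x q^\varepsilon(0,t)=0$, and to apply Itô's formula to the transformed field $p^\varepsilon(x,t):=\exp\!\big(-\tfrac1\varepsilon Y_t h(x)\big)q^\varepsilon(x,t)$, treating $x$ as a frozen parameter and $t$ as the time variable, exactly as in the pathwise (Doss--Sussmann) transformation~\cite{doss1977liens,sussmann1978gap}. For fixed $x$, the pair $(Y_t,q^\varepsilon(x,t))_{t\ge0}$ is a continuous semimartingale with explicit brackets $\dd\langle Y\rangle_t=\varepsilon\,\dd t$ and $\dd\langle Y,q^\varepsilon(x,\cdot)\rangle_t=h(x)q^\varepsilon(x,t)\,\dd t$, the latter because the martingale part of $\dd q^\varepsilon$ is $\tfrac1{\sqrt\varepsilon}h(x)q^\varepsilon\,\dd B_t^2$. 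Writing $F(y,q):=e^{-yh(x)/\varepsilon}q$ and expanding $\dd F(Y_t,q^\varepsilon(x,t))$ with Itô, one keeps four contributions, $\partial_yF\,\dd Y_t$, $\partial_qF\,\dd q^\varepsilon$, $\tfrac12\partial_{yy}F\,\dd\langle Y\rangle_t$ and $\partial_{yq}F\,\dd\langle Y,q^\varepsilon\rangle_t$, where $\partial_yF=-\tfrac1\varepsilon h\,p^\varepsilon$, $\partial_qF=e^{-yh/\varepsilon}$, $\partial_{yy}F=\tfrac{h^2}{\varepsilon^2}p^\varepsilon$ and $\partial_{yq}F=-\tfrac1\varepsilon h\,e^{-yh/\varepsilon}$.

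The key point I would stress is that the stochastic integral arising in $\partial_qF\,\dd q^\varepsilon$, namely $\tfrac1\varepsilon h\,p^\varepsilon\,\dd Y_t$, is cancelled exactly by $\partial_yF\,\dd Y_t=-\tfrac1\varepsilon h\,p^\varepsilon\,\dd Y_t$ -- this cancellation of the martingale part is the whole purpose of the change of variable, and it is why $p^\varepsilon$ solves a genuinely pathwise ($\dd B$-free) equation. Collecting the surviving deterministic terms, and noting that the two Itô corrections combine into $\big(\tfrac12\tfrac{h^2}{\varepsilon^2}\varepsilon-\tfrac{h^2}\varepsilon\big)p^\varepsilon\,\dd t=-\tfrac{h^2}{2\varepsilon}p^\varepsilon\,\dd t$, I obtain $\partial_t p^\varepsilon=\tfrac\varepsilon2 e^{-Y_th/\varepsilon}\partial_{xx}^2q^\varepsilon-\tfrac{h^2}{2\varepsilon}p^\varepsilon$. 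Then I re-express $\partial_{xx}^2q^\varepsilon$ through $p^\varepsilon$ by differentiating $q^\varepsilon=e^{Y_th/\varepsilon}p^\varepsilon$ twice in $x$, which gives $e^{-Y_th/\varepsilon}\partial_{xx}^2q^\varepsilon=\partial_{xx}^2p^\varepsilon+\tfrac{2Y_th'}\varepsilon\partial_xp^\varepsilon+\big(\tfrac{Y_t^2(h')^2}{\varepsilon^2}+\tfrac{Y_th''}\varepsilon\big)p^\varepsilon$; substituting and regrouping the zeroth-order terms produces precisely $\partial_t p^\varepsilon-Y_th'(x)\partial_xp^\varepsilon+\tfrac1\varepsilon\potential^\varepsilon(x,t)\,p^\varepsilon=\tfrac\varepsilon2\partial_{xx}^2p^\varepsilon$ with $\potential^\varepsilon=\tfrac{h^2}2-\tfrac\varepsilon2 Y_th''-\tfrac12 Y_t^2(h')^2$, as announced. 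For the Robin boundary condition I would evaluate the first-order identity $\partial_xq^\varepsilon=e^{Y_th/\varepsilon}\big(\tfrac{Y_th'}\varepsilon p^\varepsilon+\partial_xp^\varepsilon\big)$ at $x=0$, use $\partial_xq^\varepsilon(0,t)=0$ and $e^{Y_th(0)/\varepsilon}\neq0$ to get $\tfrac{Y_th'(0)}\varepsilon p^\varepsilon(0,t)+\partial_xp^\varepsilon(0,t)=0$, and multiply by $\tfrac\varepsilon2$.

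The hard part is not the algebra but the rigour: to apply Itô's formula to the $q^\varepsilon$-field one needs $t\mapsto q^\varepsilon(\cdot,t)$ to be a continuous semimartingale with values in a space on which $\partial_{xx}^2$ acts, e.g. $H^2_{\mathrm{loc}}(\RR_+)$, so that $\partial_{xx}^2q^\varepsilon$ is meaningful and the scalar computation above is legitimate. The clean route I would follow is to test~\eqref{ZakaiBiundary} against $\phi\in C_c^\infty(\bar\RR_+)$ -- crucially including functions that do \emph{not} vanish at $0$, so the boundary term is produced after an integration by parts -- apply the one-dimensional Itô formula to the real-valued semimartingales $t\mapsto\langle q^\varepsilon(\cdot,t),\phi\rangle$, and then invoke the parabolic regularity theory of the Zakai equation (e.g.~\cite{Pardouxt1980StochasticPD,pardoux1978stochastic,bain2008fundamentals}) to upgrade the weak identity to the strong form~\eqref{eq:RobZakBound} and to justify that $\partial_x$ commutes with the $\dd_t$-differential. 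Once these verifications are in place, the random-field identity~\eqref{eq:RobZakBound} follows, and fixing a realisation $(y(t))_{0\le t\le T}$ of $(Y^\varepsilon_t)$ yields the pathwise equation used in the main text.
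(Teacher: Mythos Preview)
Your proposal is correct and follows essentially the same route as the paper: apply It\^o's formula to $p^\varepsilon=e^{-Y_th(x)/\varepsilon}q^\varepsilon$ (the paper uses the It\^o product rule, you use the two-variable It\^o formula for $F(y,q)=e^{-yh/\varepsilon}q$; these are equivalent), observe the cancellation of the $\dd Y_t$ terms, arrive at $\partial_t p^\varepsilon=\tfrac\varepsilon2 e^{-Y_th/\varepsilon}\partial_{xx}^2q^\varepsilon-\tfrac{h^2}{2\varepsilon}p^\varepsilon$, and then convert $\partial_{xx}^2q^\varepsilon$ back to $p^\varepsilon$ by differentiating $q^\varepsilon=e^{Y_th/\varepsilon}p^\varepsilon$ twice. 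The boundary condition is obtained identically. The only presentational difference is that the paper opens by invoking Girsanov to reduce to $Y_t/\sqrt\varepsilon$ being Brownian, whereas your computation works directly since the $\dd Y_t$ contributions cancel and the brackets depend only on the martingale parts; your added paragraph on rigour (semimartingale regularity of $q^\varepsilon$, weak formulation, parabolic regularity) goes beyond what the paper writes out.
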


This recovers a result in \cite{davis1980multiplicative} for robust filtering of reflected diffusion.

\begin{proof}
Thanks to Girsanov change of measure (see e.g. \cite{zakai1969optimal,bain2008fundamentals}), it is sufficient to treat the case where $\left( \frac{Y_t}{\sqrt{\varepsilon}} \right)_{t \geq 0}$ is a standard brownian motion. Then, using Ito's rule for stochastic differential calculus
\[ \dd p^{\varepsilon} (x,t) = \exp \left[ -\frac{Y_t h(x)}{\varepsilon} \right] \dd q^{\varepsilon}(x,t) + q^{\varepsilon}(x,t) \dd \exp \left[ -\frac{Y_t h(x)}{\varepsilon} \right] + \dd \left[ \exp \left[ -\frac{Y_{\cdot} h(x)}{\varepsilon} \right] , q^{\varepsilon} (x,\cdot) \right]_t ,\]
the quadratic cross-variation being given by 
\[ \dd \left[ \exp \left[ -\frac{Y_{\cdot} h(x)}{\varepsilon} \right] , q^{\varepsilon} (x,\cdot) \right]_t = - \frac{q^{\varepsilon}(x,t) h^2(x)}{\varepsilon} \exp \left[ -\frac{Y_{t} h(x)}{\varepsilon} \right]. \]
Moreover, by Ito's rule
\[  \dd \exp \left[ -\frac{Y_t h(x)}{\varepsilon} \right] = - \frac{h(x)}{\varepsilon} \exp \left[ -\frac{Y_t h(x)}{\varepsilon} \right] \dd Y_t + \frac{h^2(x)}{2 \varepsilon} \exp \left[ -\frac{Y_t h(x)}{\varepsilon} \right] \dd t, \]
using \eqref{ZakaiBiundary}
\[ \dd q^{\varepsilon}(x,t) = \frac{\varepsilon}{2} \partial^2_{xx} q^{\varepsilon}(x,t) \dd t + \frac{ q^{\varepsilon}(x,t) }{\varepsilon} \dd Y_t, \]
this gives
\[ \frac{\dd}{\dd t} p^{\varepsilon} (x,t) = \frac{\varepsilon}{2} \exp \left[ -\frac{Y_t h(x)}{\varepsilon} \right] \partial^2_{xx} p^{\varepsilon}(x,t) - \frac{h^2(x)}{2 \varepsilon} p^{\varepsilon} (x,t), \]
noticing that
\[ \exp \left[ -\frac{Y_t h(x)}{\varepsilon} \right] \partial_x q^{\varepsilon} (x,t) = \partial_x p^{\varepsilon} (x,t) + \frac{Y_t h'(x)}{\varepsilon} p^{\varepsilon} (x,t), \]
it is straightforward to obtain that
\begin{align*} 
\partial^2_{xx} p^{\varepsilon} (x,t) = \exp \left[ -\frac{Y_t h(x)}{\varepsilon} \right] \partial^2_{xx} q^{\varepsilon} (x,t) &- \frac{2 Y_t h'(x)}{\varepsilon} p^{\varepsilon} (x,t) \\
&- p^{\varepsilon} (x,t) \left[ \frac{\left( Y_t h'(x) \right)^2}{\varepsilon} + \frac{Y_t h''(x)}{\varepsilon}. \right]  
\end{align*}
Gathering everything
\[
\frac{\dd}{\dd t} p^{\varepsilon} (x,t) = Y_t h'(x) \partial_{x} p^{\varepsilon}(x,t) + \frac{p^{\varepsilon}(x,t)}{\varepsilon} \left( -\frac{h^2(x)}{2} + \frac{\left( Y_t h'(x) \right)^2}{2} + \frac{\varepsilon}{2} Y_t h''(x) \right) + \frac{\varepsilon}{2} \partial^2_{xx} p^{\varepsilon}(x,t) ,
\]
which is the desired equation. The boundary conditions are directly obtained from the ones in \eqref{ZakaiBiundary}.
\end{proof} 

In equation \eqref{eq:RobZakBound}, note that the random variable $Y_t$ just behaves as a parameter, which only appears inside the coefficients. This parameter $Y_t$ being defined as the function $\omega \in \Omega \mapsto Y(t,\omega)$, this can be seen as a family of deterministic PDEs indexed by a parameter $\omega$. At this point, it is only necessary to consider given realisations of the trajectory, i.e. continuous deterministic functions $(y(s))_{0\leq s \leq t}$. The remaining question will then be the measurability of the solution in $\omega$, in order to recover a stochastic process $p^{\varepsilon} (\omega,x,t)$ from solving a deterministic PDE for each $(y(s))_{0\leq s \leq t}$. This question is positively answered by the prominent works \cite{doss1977liens}, \cite{sussmann1978gap} which even prove that considering $C^1$ trajectories $y(t)$ is sufficient. As in the whole paper, this allows to consider $p^{\varepsilon} (x,t)$ as a deterministic function which depends on a given $C^1$ trajectory $(y(s))_{0\leq s \leq t}$. The function $p^{\varepsilon} (x,t)$ is thus the solution of a linear parabolic PDE, for which strong $C^2$ regularity can be shown using the classical theory.

\section*{Acknowledgements}
The authors would like to thank Kai Shi for his illustrations of the Skorohod dynamics. Philippe Moireau would like to sincerely thank Hasnaa Zidani for her guidance at the beginning of this work.

\bibliographystyle{plain} 
\bibliography{hjb-biblio.bib} 

\end{document}